\renewcommand{\phi}{\varphi}
\renewcommand{\geq}{\geqslant}
\renewcommand{\leq}{\leqslant}
\renewcommand{\epsilon}{\varepsilon}
\renewcommand{\kappa}{\varkappa}
\DeclareMathOperator{\spec}{\sf Spec}  \DeclareMathOperator{\thick}{thick}
 \DeclareMathOperator{\inj}{\sf
Sp} \DeclareMathOperator{\Hom}{Hom} \DeclareMathOperator{\supp}{\sf
supp} \DeclareMathOperator{\open}{{open}}
\DeclareMathOperator{\supph}{\sf supph}
\DeclareMathOperator{\End}{End}
 \DeclareMathOperator{\fg}{fg}
\DeclareMathOperator{\fp}{fp} \DeclareMathOperator{\Qcoh}{Qcoh}
\DeclareMathOperator{\coh}{coh} \DeclareMathOperator{\kr}{Ker}
\DeclareMathOperator{\Ext}{Ext} \DeclareMathOperator{\im}{Im}
\DeclareMathOperator{\Proj}{\sf Proj}
\DeclareMathOperator{\perf}{\cc D_{per}}
\DeclareMathOperator{\Mod}{Mod} \DeclareMathOperator{\modd}{mod}
 \DeclareMathOperator{\loc}{loc}
\DeclareMathOperator{\floc}{f.loc}
\newcommand {\lp}{\varinjlim}
\newcommand {\lato}{L_{\open}}
\newcommand {\latl}{L_{\loc}}
\newcommand {\latfl}{L_{\floc}}
\newcommand {\lattfl}{L_{\floc,\otimes}}
\newcommand{\lo}{\varprojlim}
\newcommand{\lra}[1]{\bl{#1}\longrightarrow\relax}
\newcommand{\bl}[1]{\buildrel #1\over}
\newcommand{\cc}{\mathcal}
\newcommand{\ps}{\oplus}
\newcommand{\ff}{\mathfrak}
\newcommand{\wh}{\widehat}
\newcommand{\wt}{\widetilde}
\newcommand{\ifff}{if and only if }
\newcommand{\lfp}{R \modd}
\newcommand{\bb}{\mathbb}
\newcommand{\Rfp}{\Mod R}
\newtheorem{thm}{Theorem}[section]
\newtheorem{prop}[thm]{Proposition}
\newtheorem{cor}[thm]{Corollary}
\newtheorem{lem}[thm]{Lemma}
\newtheorem*{red}{Reduction principle}
\newtheorem*{theo}{Theorem}
\newtheorem*{rem}{Remark}
\newtheorem*{defs}{Definition}
\numberwithin{equation}{section}
\begin{document}

\footskip30pt
%\baselineskip=1.2\baselineskip

\title{Classifying finite localizations of quasi-coherent sheaves}
\author{Grisha Garkusha}
\address{Department of Mathematics, University of Wales Swansea, Singleton Park, SA2 8PP Swansea, UK}
\email{G.Garkusha@swansea.ac.uk}

\urladdr{http://www-maths.swan.ac.uk/staff/gg}

\keywords{quasi-compact, quasi-separated schemes, quasi-coherent
sheaves, localizing subcategories, thick subcategories}

\subjclass[2000]{14A15, 18F20}
\begin{abstract}
Given a quasi-compact, quasi-separated scheme $X$, a bijection
between the tensor localizing subcategories of finite type in
$\Qcoh(X)$ and the set of all subsets $Y\subseteq X$ of the form
$Y=\bigcup_{i\in\Omega}Y_i$, with $X\setminus Y_i$ quasi-compact and
open for all $i\in\Omega$, is established. As an application, there
is constructed an isomorphism of ringed spaces
   $$(X,\cc O_X)\lra{\sim}(\spec(\Qcoh(X)),\cc O_{\Qcoh(X)}),$$
where $(\spec(\Qcoh(X)),\cc O_{\Qcoh(X)})$ is a ringed space
associated to the lattice of tensor localizing subcategories of
finite type. Also, a bijective correspondence between the tensor
thick subcategories of perfect complexes $\perf(X)$ and the tensor
localizing subcategories of finite type in $\Qcoh(X)$ is
established.
\end{abstract}

%\dedicatory{}

\maketitle

\newdir{ >}{{}*!/-6pt/@{>}} %this command is to define the arrow of the type \ar@{ >->} (spacing of arrows is needed sometimes)

\thispagestyle{empty} \pagestyle{plain} \tableofcontents

\section{Introduction}

In his celebrated work on abelian categories P.~Gabriel~\cite{Ga}
proved that for any noetherian scheme $X$ the assignments
   \begin{equation}\label{gabr}
    \coh X\supseteq\cc D\mapsto\bigcup_{x\in\cc D}\supp_X(x)\quad\text{and}\quad X\supseteq
    U\mapsto \{x\in\coh X\mid\supp_X(x)\subseteq U\}
   \end{equation}
induce bijections between
\begin{enumerate}
 \item the set of all tensor Serre subcategories of $\coh X$, and
 \item the set of all subsets $U\subseteq X$ of the form
       $U=\bigcup_{i\in \Omega} Y_i$ where, for all $i \in \Omega$, $Y_i$
       has quasi-compact open complement
       $X\setminus Y_i$.
\end{enumerate}
As a consequence of this result, $X$ can be reconstructed from its
abelian category, $\coh X$, of coherent sheaves  (see
Buan-Krause-Solberg~\cite[Sec.~8]{BKS}). Garkusha and
Prest~\cite{GP,GP1,GP2} have proved similar classification and
reconstruction results for affine and projective schemes.

Given a quasi-compact, quasi-separated scheme $X$, let $\perf(X)$
denote the derived category of perfect complexes. It comes equipped
with a tensor product $\otimes:=\otimes^L_X$. A thick triangulated
subcategory $\cc T$ of $\perf(X)$ is said to be a tensor subcategory
if for every $E \in \perf(X)$ and every object $A\in \cc T$, the
tensor product $E\otimes A$ also is in $\cc T$. Thomason~\cite{T}
establishes a classification similar to~\eqref{gabr} for tensor
thick subcategories of $\perf(X)$ in terms of the topology of $X$.
Hopkins and Neeman (see~\cite{Hop,N}) did the case where $X$ is
affine and noetherian.

Based on Thomason's classification theorem, Balmer~\cite{B1}
reconstructs the noetherian scheme $X$ from the tensor thick
triangulated subcategories of $\perf(X)$. This result has been
generalized to quasi-compact, quasi-separated schemes by
Buan-Krause-Solberg~\cite{BKS}.

The main result of this paper is a generalization of the
classification result by Garkusha and Prest~\cite{GP,GP1,GP2} to
schemes. Let $X$ be a quasi-compact, quasi-separated scheme. Denote
by $\Qcoh(X)$ the category of quasi-coherent sheaves. We say that a
localizing subcategory $\cc S$ of $\Qcoh(X)$ is of finite type if
the canonical functor from the quotient category $\Qcoh(X)/\cc
S\to\Qcoh(X)$ preserves directed sums.

\begin{theo}[Classification]
Let $X$ be a quasi-compact, quasi-separated scheme. Then the maps
   $$V\mapsto\mathcal S=\{\cc F\in\Qcoh(X)\mid\supp_X(\cc F)\subseteq V\}$$
and
   $$\mathcal S\mapsto V=\bigcup_{\cc F\in\mathcal S}\supp_X(\cc F)$$ induce
bijections between
\begin{enumerate}
 \item the set of all subsets of
the form $V=\bigcup_{i\in\Omega}V_i$ with quasi-compact open
complement $X\setminus V_i$ for all $i\in\Omega$,

 \item the set of all tensor localizing subcategories of finite type in $\Qcoh(X)$.
\end{enumerate}
\end{theo}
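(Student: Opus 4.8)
The plan is to reduce the scheme-theoretic statement to a lattice-theoretic and local statement, following the strategy of Garkusha--Prest for affine schemes combined with the gluing techniques of Thomason. First I would set up the two basic compatibilities. On one side, the assignment $V\mapsto\cc S_V:=\{\cc F\mid\supp_X(\cc F)\subseteq V\}$ always produces a tensor localizing subcategory (closure under the tensor action is clear since $\supp_X(\cc E\otimes\cc F)\subseteq\supp_X(\cc F)$, and closure under extensions, subobjects, quotients and directed colimits follows from the corresponding behaviour of support for quasi-coherent sheaves). The crucial point is that $\cc S_V$ is of \emph{finite type} precisely when $V$ has the stated form, i.e.\ $V=\bigcup_i V_i$ with $X\setminus V_i$ quasi-compact open; this is where one uses that $\Qcoh(X)$ is locally finitely presented and that the finitely presented objects behave well with respect to the finite cover by affine opens. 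On the other side, for any tensor localizing subcategory $\cc S$ of finite type, $V_{\cc S}:=\bigcup_{\cc F\in\cc S}\supp_X(\cc F)$ is a union of supports; I would show each such support is contained in a set of the required form by reducing to finitely presented generators and invoking the affine/Thomason description of supports, so that $V_{\cc S}$ itself has the stated form.

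The heart of the argument is showing the two maps are mutually inverse, and I would do this by localizing on an affine open cover $X=\bigcup_{j=1}^n U_j$. The key reduction is a \emph{local-to-global} principle: a tensor localizing subcategory of finite type $\cc S\subseteq\Qcoh(X)$ should be recoverable from the collection of its restrictions $\cc S_j\subseteq\Qcoh(U_j)$, and on each affine $U_j=\spec R_j$ the classification is already known (Garkusha--Prest for the affine case, which identifies tensor localizing subcategories of finite type in $\Mod R_j$ with Thomason-open subsets of $\spec R_j$). Concretely: given $\cc S$, set $V=V_{\cc S}$; I must check $\cc S=\cc S_V$. The inclusion $\cc S\subseteq\cc S_V$ is immediate. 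For the reverse, take $\cc F\in\Qcoh(X)$ with $\supp_X(\cc F)\subseteq V$; restricting to each $U_j$ and using the affine classification, $\cc F|_{U_j}$ lies in the localizing subcategory of $\Qcoh(U_j)$ attached to $V\cap U_j$, which equals $\cc S_j$; then a gluing/descent argument — writing $\cc F$ as built from its restrictions via the finite affine cover and the quasi-separatedness of $X$ (which controls the intersections $U_i\cap U_j$) — shows $\cc F\in\cc S$. The opposite composite, $V\mapsto\cc S_V\mapsto V_{\cc S_V}$, reduces similarly to the affine case where it is known that $\bigcup_{\cc F\in\cc S_V}\supp_X(\cc F)=V$ for Thomason-open $V$.

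I expect the main obstacle to be precisely this gluing step: assembling a quasi-coherent sheaf from compatible local data living in the various restricted subcategories $\cc S_j$, and verifying that the assembled sheaf lies in $\cc S$ rather than merely having the right support. This requires a careful treatment of how localizing subcategories of finite type interact with the restriction functors $j^*:\Qcoh(X)\to\Qcoh(U_j)$ and their right adjoints $j_*$, using quasi-compactness and quasi-separatedness to ensure $j_*$ preserves the relevant colimits and that a Mayer--Vietoris type sequence lets one induct on the size of the cover. A secondary technical point is establishing that $\cc S_V$ is genuinely of finite type, for which one needs that $\Qcoh(X)$ has a generating set of finitely presented objects with computable support and that "finite type" for the quotient functor can be checked locally on the affine cover. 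Once the $n=1$ (affine) case is in hand and the restriction/gluing machinery is set up, the induction on the cover and the verification that the two maps are inverse should be formal.
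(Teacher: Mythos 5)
Your overall strategy matches the paper's: prove the affine case first (Garkusha--Prest), then pass to a general quasi-compact, quasi-separated scheme by induction on a cover $X=U\cup V$ with $U,V$ quasi-compact open (the paper formalizes this as a ``Reduction Principle''). You also correctly identify the local-to-global step — recovering a tensor localizing subcategory of finite type $\cc S$ from its restrictions $\alpha_{X,U}(\cc S)$, $\alpha_{X,V}(\cc S)$ — as the crux of the argument.

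However, there is a real gap in the mechanism you propose for that step. You suggest a sheaf-theoretic Mayer--Vietoris/gluing argument: given $\cc F$ with $\cc F|_U\in\alpha_{X,U}(\cc S)$ and $\cc F|_V\in\alpha_{X,V}(\cc S)$, ``assemble $\cc F$ from its restrictions'' and conclude $\cc F\in\cc S$. This does not work directly, because the pushforward $j_{U*}$ does \emph{not} carry $\alpha_{X,U}(\cc S)$ into $\cc S$ (e.g.\ take $\cc S=\cc S_V$: then $\alpha_{X,U}(\cc S)=\cc S_{U\cap V}$, but $j_{U*}$ of a sheaf on $U$ supported in $U\setminus V$ need not have support in $X\setminus V$). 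So the exact sequence $0\to\cc F\to j_{U*}\cc F|_U\oplus j_{V*}\cc F|_V\to\cdots$ does not by itself place $\cc F$ in $\cc S$. What is actually needed — and what the paper proves — is a lattice-theoretic statement: the square of restriction maps
$$\begin{CD}\lattfl(X)@>\alpha_{X,U}>>\lattfl(U)\\ @V\alpha_{X,V}VV @VV{\alpha_{U,U\cap V}}V \\ \lattfl(V)@>\alpha_{V,U\cap V}>>\lattfl(U\cap V)\end{CD}$$
is a pullback (Corollary~\ref{pullf}), deduced from the bijections $\lattfl(-)\cong\lato(\inj_{fl,\otimes}(-))$ of Theorem~\ref{tenfg} together with the behaviour of the injective spectrum under finite localization (Propositions~\ref{len}, \ref{lenn}, Corollary~\ref{zhg}). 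With this pullback in hand, one does not glue sheaves at all; one compares the big commuting cube of lattices (Theorem~\ref{class}) and reads off $\phi_X\psi_X=1$ and $\psi_X\phi_X=1$ from the affine case. Similarly, your claim that $\cc S_V$ is of finite type (Proposition~\ref{qq}) and that $V_{\cc S}$ is Thomason-open (Lemma~\ref{sss}) are both proved in the paper by the same pullback-lattice reduction, not by direct estimates on supports of finitely presented objects. So the missing ingredient is precisely this pullback lemma for the lattices of (tensor) localizing subcategories of finite type under restriction to quasi-compact opens; without it, the ``gluing/descent'' step as you describe it does not go through.
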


As an application of the Classification Theorem, we show that there
is a 1-1 correspondence between the tensor finite localizations in
$\Qcoh(X)$ and the tensor thick subcategories in $\perf(X)$
(cf.~\cite{Ho,GP,GP2}).

\begin{theo}
Let $X$ be a quasi-compact and quasi-separated scheme. The
assignments
   $$\cc T\mapsto\cc S=\{\cc F\in\Qcoh(X)\mid\supp_X(\cc F)
     \subseteq\bigcup_{n\in\bb Z,E\in\cc T}\supp_X(H_n(E))\}$$
and
   $$\cc S\mapsto
     \{E\in\perf(X)\mid H_n(E)\in\cc S\textrm{ for all $n\in\bb Z$}\}$$
induce a bijection between
\begin{enumerate}
\item the set of all tensor thick subcategories of $\perf(X)$,

\item the set of all tensor localizing subcategories of finite type
in $\Qcoh(X)$.
\end{enumerate}
\end{theo}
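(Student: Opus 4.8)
The plan is to deduce the statement from the Classification Theorem together with Thomason's classification of tensor thick subcategories of $\perf(X)$~\cite{T}. Call a subset $Y\subseteq X$ \emph{Thomason} if $Y=\bigcup_{i\in\Omega}Y_i$ with each $X\setminus Y_i$ quasi-compact and open. For a perfect complex $E$ set $\supp_X(E):=\bigcup_{n\in\bb Z}\supp_X(H_n(E))$; equivalently, $\supp_X(E)$ is the set of $x\in X$ at which the stalk complex $E_x$ fails to be acyclic, since $H_n(E_x)\cong(H_n E)_x$ and a complex of modules over a ring is acyclic if and only if all its homology modules vanish. This $\supp_X(E)$ is a Thomason subset of $X$. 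Thomason's theorem asserts that the assignments
$$\cc T\longmapsto Y_{\cc T}:=\bigcup_{E\in\cc T}\supp_X(E)\qquad\text{and}\qquad Y\longmapsto\cc T_Y:=\{E\in\perf(X)\mid\supp_X(E)\subseteq Y\}$$
are mutually inverse bijections between the tensor thick subcategories of $\perf(X)$ and the Thomason subsets of $X$. The Classification Theorem gives the parallel mutually inverse bijections $\cc S\mapsto V_{\cc S}:=\bigcup_{\cc F\in\cc S}\supp_X(\cc F)$ and $V\mapsto\cc S_V:=\{\cc F\in\Qcoh(X)\mid\supp_X(\cc F)\subseteq V\}$ between the tensor localizing subcategories of finite type in $\Qcoh(X)$ and the \emph{same} family of Thomason subsets.

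It then suffices to identify the two maps of the statement with the composites $\cc T\mapsto Y_{\cc T}\mapsto\cc S_{Y_{\cc T}}$ and $\cc S\mapsto V_{\cc S}\mapsto\cc T_{V_{\cc S}}$. For the first: by the definition of $\supp_X(E)$ we have $Y_{\cc T}=\bigcup_{n\in\bb Z,\,E\in\cc T}\supp_X(H_n(E))$, so $\cc S_{Y_{\cc T}}=\{\cc F\in\Qcoh(X)\mid\supp_X(\cc F)\subseteq Y_{\cc T}\}$ is exactly the subcategory the theorem associates to $\cc T$, and it is a tensor localizing subcategory of finite type by the Classification Theorem. For the second: given $\cc S$, put $V=V_{\cc S}$, so that $\cc S=\cc S_V$ by the Classification Theorem; then for $E\in\perf(X)$ we have $H_n(E)\in\cc S$ for all $n$ if and only if $\supp_X(H_n(E))\subseteq V$ for all $n$, if and only if $\supp_X(E)\subseteq V$, that is, if and only if $E\in\cc T_V$. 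Thus the subcategory the theorem associates to $\cc S$ is $\cc T_{V_{\cc S}}$, which is a tensor thick subcategory by Thomason's theorem. (Directly, one verifies that $\cc T_V$ is closed under shifts and direct summands, that it satisfies the two-out-of-three property for triangles --- via the long exact homology sequence and the fact that $\cc S$ is localizing --- and that $E\otimes A\in\cc T_V$ whenever $E\in\perf(X)$ and $A\in\cc T_V$, because $\supp_X(E\otimes A)\subseteq\supp_X(A)\subseteq V$.)

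Granting these two identifications, the conclusion is formal: the first map of the statement factors as a bijection (Thomason's $\cc T\mapsto Y_{\cc T}$) followed by a bijection (the Classification Theorem's $V\mapsto\cc S_V$), the second factors as the Classification Theorem's $\cc S\mapsto V_{\cc S}$ followed by Thomason's $Y\mapsto\cc T_Y$, and from $V_{\cc S_V}=V$ and $Y_{\cc T_Y}=Y$ one reads off at once that the two composites are mutually inverse. So essentially all the substance already resides in the Classification Theorem and in Thomason's theorem, and the only new ingredient is the elementary identity $\supp_X(E)=\bigcup_n\supp_X(H_n(E))$ for perfect complexes. Accordingly I expect no real obstacle here; the one thing to be careful about is keeping the four maps and their directions straight and checking that the notion of support on the $\perf(X)$ side --- taken through the homology sheaves --- matches the one on the $\Qcoh(X)$ side, which is precisely what that identity guarantees.
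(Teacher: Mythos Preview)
Your proposal is correct and follows essentially the same route as the paper: both arguments factor the two maps of the statement through the common set of Thomason subsets of $X$, invoking the Classification Theorem on one side and Thomason's theorem on the other, and both use the identity $\supph_X(E)=\bigcup_{n}\supp_X(H_n(E))$ to match the two notions of support. The paper records this slightly more compactly via a commuting triangle with $\lato(X^*)$ at the apex and checks $\tau\phi=\nu$, $\rho\nu=\phi$, but the content is the same.
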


Another application of the Classification Theorem is the
Recostruction Theorem. A common approach in non-commutative geometry
is to study abelian or triangulated categories and to think of them
as the replacement of an underlying scheme. This idea goes back to
work of Grothendieck and Manin. The approach is justified by the
fact that a noetherian scheme can be reconstructed from the abelian
category of coherent sheaves (Gabriel~\cite{Ga}) or from the
category of perfect complexes (Balmer~\cite{B1}). Rosenberg~\cite{R}
proved that a quasi-compact scheme $X$ is reconstructed from its
category of quasi-coherent sheaves.

In this paper we reconstruct a quasi-compact, quasi-separated scheme
$X$ from $\Qcoh(X)$. Our approach, similar to that used
in~\cite{GP,GP1,GP2}, is entirely different from
Rosenberg's~\cite{R} and less abstract.

Following Buan-Krause-Solberg~\cite{BKS} we consider the lattice
$\lattfl(X)$ of tensor localizing subcategories of finite type in
$\Qcoh(X)$ as well as its prime ideal spectrum $\spec(\Qcoh(X))$.
The space comes naturally equipped with a sheaf of commutative rings
$\cc O_{\Qcoh(X)}$. The following result says that the scheme
$(X,\cc O_X)$ is isomorphic to $(\spec(\Qcoh(X)),\cc O_{\Qcoh(X)})$.

\begin{theo}[Reconstruction]
Let $X$ be a quasi-compact and quasi-sepa\-rated scheme. Then there
is a natural isomorphism of ringed spaces
   $$f:(X,\cc O_X)\lra{\sim}(\spec(\Qcoh(X)),\cc O_{\Qcoh(X)}).$$
\end{theo}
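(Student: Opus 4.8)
The plan is to build the isomorphism $f$ in two stages: first the homeomorphism of underlying spaces, then the isomorphism of structure sheaves.

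For the underlying spaces, the Classification Theorem identifies the lattice $\lattfl(X)$ of tensor localizing subcategories of finite type with the lattice of subsets $V\subseteq X$ of the form $V=\bigcup_i V_i$ with $X\setminus V_i$ quasi-compact open; equivalently, it is the lattice of open subsets of $X$ that are unions of quasi-compact opens, which — since $X$ is quasi-compact and quasi-separated — is exactly the lattice $\lato(X)$ of all open subsets (here one uses that a quasi-compact quasi-separated scheme has a basis of quasi-compact opens, so every open is such a union). First I would therefore record that $\lattfl(X)$ is isomorphic, as a lattice, to the frame of open subsets of $X$. Next I would invoke the general machinery of Buan–Krause–Solberg: for a suitable lattice $L$, one forms the space $\spec L$ of prime elements (equivalently, a point is a prime ideal of $L$, or dually a join-irreducible-type filter), topologized so that the lattice elements become the open sets. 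Applying this to $L=\lattfl(X)\cong\lato(X)$, the point is that the prime spectrum of the frame of opens of a sober space recovers that space. Since every scheme is a sober topological space, I would conclude $\spec(\Qcoh(X))\cong X$ as topological spaces, with the comparison map $f$ sending $x\in X$ to the localizing subcategory $\{\cc F\mid x\notin\supp_X(\cc F)\}$ (the prime corresponding to the complement of the smallest nonempty open containing the relevant data at $x$), and conversely. Naturality in $X$ follows because the support function and the Classification bijection are natural with respect to the relevant morphisms.

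For the structure sheaf, I would compute $\cc O_{\Qcoh(X)}$ on a basis. By construction of Buan–Krause–Solberg, $\cc O_{\Qcoh(X)}$ assigns to a basic open — corresponding to a tensor localizing subcategory of finite type $\cc S_V$, i.e. to an open $V\subseteq X$ — the endomorphism ring of the unit object in the quotient (tensor) category $\Qcoh(X)/\cc S_V$. So the key local computation is: for $V=X\setminus U$ with $U$ quasi-compact open, the quotient $\Qcoh(X)/\cc S_V$ is equivalent, as a tensor category, to $\Qcoh(U)$, and under this equivalence the unit $\cc O_X$ goes to $\cc O_U$; hence $\End(\text{unit})=\Gamma(U,\cc O_U)=\cc O_X(U)$. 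The identification $\Qcoh(X)/\cc S_V\simeq\Qcoh(U)$ is exactly the statement that quasi-coherent sheaves supported off $U$ form the kernel of the (exact, coproduct-preserving) restriction functor $j^*:\Qcoh(X)\to\Qcoh(U)$ for the open immersion $j:U\hookrightarrow X$, and that $j^*$ is a localization — this is standard for quasi-compact quasi-separated $X$ (the functor $j_*$ is its right adjoint, $j^*j_*\cong\id$, and the condition that $U$ be quasi-compact is what makes the localizing subcategory of finite type). Compatibility with restriction maps between basic opens then gives that the presheaf $V\mapsto\cc O_X(V)$ and the structure presheaf of $\spec(\Qcoh(X))$ agree on the basis, and sheafifying yields the isomorphism $f^\sharp:\cc O_{\Qcoh(X)}\xrightarrow{\sim}f_*\cc O_X$.

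I expect the main obstacle to be the precise identification of the quotient category $\Qcoh(X)/\cc S_V$ with $\Qcoh(U)$ as a \emph{tensor} category, together with checking that this identification is compatible with the lattice structure finely enough to globalize — in particular, making sure that the tensor structure on the quotient (inherited from $\Qcoh(X)$) matches the usual tensor structure on $\Qcoh(U)$, and that the assignment $U\mapsto\Gamma(U,\cc O_U)$ patches correctly over intersections, which requires the quasi-separatedness hypothesis to control the behaviour of $j_*$ and the Čech-type gluing. A secondary subtlety is verifying that the points of $\spec(\Qcoh(X))$ are \emph{exactly} the primes coming from points of $X$ (no extra points), which is where sobriety of the scheme is essential; one must also confirm that the Zariski-type topology placed on $\spec(\Qcoh(X))$ by the Buan–Krause–Solberg construction coincides with the one transported from $X$, rather than some coarser or finer topology. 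Once these identifications are in hand, naturality and the ringed-space isomorphism follow formally.
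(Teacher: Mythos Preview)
Your approach has the right overall shape, but there is a genuine error in the first stage. The Classification Theorem identifies $\lattfl(X)$ with subsets $V=\bigcup_i V_i$ where $X\setminus V_i$ is quasi-compact open --- so each $V_i$ is \emph{closed} in $X$, not open. These are precisely the open sets of the Hochster dual $X^*$, and the correct lattice isomorphism is $\lattfl(X)\cong\lato(X^*)$, not $\lato(X)$. Your sentence ``equivalently, it is the lattice of open subsets of $X$ that are unions of quasi-compact opens'' is therefore false: you have confused complements of quasi-compact opens with quasi-compact opens themselves. The paper handles this by first obtaining a homeomorphism $X^*\to\spec\lattfl(X)$ via the Stone-type duality for ideal lattices (Proposition~\ref{pr:openlattice} and Corollary~\ref{prrco}), and then \emph{defining} $\spec(\Qcoh(X)):=(\spec\lattfl(X))^*$; the extra Hochster dual is what returns $X$. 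Without tracking this dualization your identification of the underlying space is unjustified, and the confusion resurfaces in your structure-sheaf paragraph, where ``an open $V\subseteq X$'' immediately becomes ``$V=X\setminus U$ with $U$ quasi-compact open'', i.e.\ $V$ closed.

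Your second stage --- computing the structure sheaf on a basis via the equivalence $\Qcoh(X)/\cc S\simeq\Qcoh(U)$ for quasi-compact open $U$ (with $\cc S$ the sheaves vanishing on $U$), hence $\End(\cc O_X)\cong\Gamma(U,\cc O_U)=\cc O_X(U)$ --- is a legitimate alternative to the paper's argument, which instead constructs $f^\sharp$ globally from the factorization of the restriction functor through the quotient and then checks it is an isomorphism on stalks using the affine computation $\cc O_{\Qcoh(X),\cc P}\cong R_P$. Your route is more concrete on basic opens but obliges you to verify that the presheaf $U\mapsto\End_{\Qcoh(X)/\cc S_U}(\cc O_X)$ already satisfies the sheaf condition on the basis (or at least that sheafification does not change its values there); the paper's stalkwise check sidesteps this. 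Either works once the Hochster-dual bookkeeping above is corrected.
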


Other results presented here worth mentioning are the theorem
classifying finite localizations in a locally finitely presented
Grothendieck category $\cc C$ (Theorem~\ref{zg}) in terms of some
topology on the injective spectrum $\inj\cc C$, generalizing a
result of Herzog~\cite{H} and Krause~\cite{Kr1} for locally coherent
Grothendieck categories, and the Classification and Reconstruction
Theorems for coherent schemes.

\section{Localization in Grothendieck categories}

The category $\Qcoh(X)$ of quasi-coherent sheaves over a scheme $X$
is a Gro\-then\-dieck category (see~\cite{E}), so hence we can apply
the general localization theory for Grothendieck categories which is
of great utility in our analysis. For the convenience of the reader
we shall recall some basic facts of this theory.

We say that a subcategory $\cc S$ of an abelian category $\cc C$ is
a {\it Serre subcategory\/} if for any short exact sequence
   $$0\to X\to Y\to Z\to 0$$
in $\cc C$ an object $Y\in\cc S$ if and only if $X$, $Z\in\cc S$. A
Serre subcategory $\cc S$ of a Grothendieck category $\cc C$ is {\it
localizing\/} if it is closed under taking direct limits.
Equivalently, the inclusion functor $i:\cc S\to\cc C$ admits the
right adjoint $t=t_{\cc S}:\cc C\to\cc S$ which takes every object
$X\in\cc C$ to the maximal subobject $t(X)$ of $X$ belonging to $\cc
S$. The functor $t$ we call the {\it torsion functor}. An object $C$
of $\cc C$ is said to be {\it $\cc S$-torsionfree\/} if $t(C)=0$.
Given a localizing subcategory $\cc S$ of $\cc C$ the {\it quotient
category $\cc C/\cc S$\/} consists of $C\in\cc C$ such that
$t(C)=t^1(C)=0$. The objects from $\cc C/\cc S$ we call {\it $\cc
S$-closed objects}. Given $C\in\cc C$ there exists a canonical exact
sequence
   $$0\to A'\to C\lra{\lambda_C}C_{\cc S}\to A''\to 0$$
with $A'=t(C)$, $A''\in\cc S$, and where $C_{\cc S}\in\cc C/\cc S$
is the maximal essential extension of $\wt C=C/t(C)$ such that
$C_{\cc S}/\wt C\in\cc S$. The object $C_{\cc S}$ is uniquely
defined up to a canonical isomorphism and is called the {\it $\cc
S$-envelope\/} of $C$. Moreover, the inclusion functor $i:\cc C/\cc
S\to\cc C$ has the left adjoint {\it localizing functor\/} $(-)_{\cc
S}:\cc C\to\cc C/\cc S$, which is also exact. It takes each $C\in\cc
C$ to $C_{\cc S}\in\cc C/\cc S$. Then,
   $$\Hom_{\cc C}(X,Y)\cong\Hom_{\cc C/\cc S}(X_{\cc S},Y)$$
for all $X\in\cc C$ and $Y\in\cc C/\cc S$.

If $\cc C$ and $\cc D$ are Grothendieck categories, $q:\cc C\to\cc
D$ is an exact functor, and a functor $s:\cc D\to\cc C$ is fully
faithful and right adjoint to $q$, then $\cc S:=\kr q$ is a
localizing subcategory and there exists an equivalence $\cc C/\cc
S\bl H\cong\cc D$ such that $H\circ(-)_{\cc S}=q$. We shall refer to
the pair $(q,s)$ as the {\it localization pair}.

The following result is an example of the localization pair.

\begin{prop} (cf.~\cite[\S III.5; Prop. VI.3]{Ga}) \label{loc}%\marginpar{loc}
Let $X$ be a scheme, let $U$ be an open subset of $X$ such that the
canonical injection $j:U\to X$ is a quasi-compact map. Then $j_*(\cc
G)$ is a quasi-coherent $\cc O_X$-module for any quasi-coherent $\cc
O_X|_U$-module $\cc G$ and the pair of adjoint functors $(j^*,j_*)$
is a localization pair. That is the category of quasi-coherent $\cc
O_X|_U$-modules $\Qcoh(U)$ is equivalent to $\Qcoh(X)/\cc S$, where
$\cc S=\kr j^*$. Moreover, a quasi-coherent $\cc O_X$-module $\cc F$
belongs to the localizing subcategory $\cc S$ if and only if
$\supp_X(\cc F)=\{P\in X\mid\cc F_P\neq 0\}\subseteq Z=X\setminus
U$. Also, for any $\cc F\in\Qcoh(X)$ we have $t_{\cc S}(\cc F)=\cc
H_Z^0(\cc F)$, where $\cc H_Z^0(\cc F)$ stands for the subsheaf of
$\cc F$ with supports in $Z$.
\end{prop}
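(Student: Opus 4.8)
The plan is to reduce everything to the well-known affine case and then glue. First I would treat the affine situation: if $X=\spec A$ and $U=D(f_1)\cup\dots\cup D(f_n)$ is a quasi-compact open with complement $Z$, then $\Qcoh(X)\simeq\Mod A$, and $j^*$ corresponds to the localization functor killing the Serre subcategory of modules supported on $Z$. The fact that $j_*\cc G$ is quasi-coherent here is classical (one checks it on a finite affine cover of $U$ and uses that $j$ is quasi-compact and $X$ is affine, hence quasi-separated, so the intersections are again quasi-compact). The identity $t_{\cc S}(\cc F)=\cc H^0_Z(\cc F)$ is then the statement that the $A$-module $t_{\cc S}(M)$ of elements killed by a power of the ideal $I=(f_1,\dots,f_n)$ is exactly the submodule of sections supported on $Z=V(I)$; and $\cc F\in\cc S$ iff $\cc F_P=0$ for all $P\in U$, i.e.\ $\supp_X\cc F\subseteq Z$.

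Next I would globalize. Since $j:U\to X$ is quasi-compact and $X$ is a scheme (hence $j$ is quasi-separated as well, being a monomorphism), for any quasi-coherent $\cc O_X|_U$-module $\cc G$ the pushforward $j_*\cc G$ is quasi-coherent: this is local on $X$, so it follows from the affine case applied to an open affine cover of $X$ together with the fact that quasi-coherence and the formation of $j_*$ commute with restriction to opens (again using quasi-compactness of $j$ so that $U\cap V$ is quasi-compact for $V$ affine). The unit $\id\to j_*j^*$ and counit $j^*j_*\to\id$ are the standard ones; $j^*$ is exact because restriction of sheaves is exact, and $j_*$ is fully faithful precisely because the counit $j^*j_*\cc G\to\cc G$ is an isomorphism (immediate, as $(j_*\cc G)|_U=\cc G$). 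By the general nonsense recalled just before the proposition ("If $\cc C$ and $\cc D$ are Grothendieck categories, $q:\cc C\to\cc D$ exact, $s$ fully faithful right adjoint\dots"), the pair $(j^*,j_*)$ is a localization pair, so $\cc S:=\kr j^*$ is localizing and $\Qcoh(U)\simeq\Qcoh(X)/\cc S$.

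It remains to identify $\cc S$ and the torsion functor. An object $\cc F$ lies in $\cc S=\kr j^*$ iff $\cc F|_U=0$ iff $\cc F_P=0$ for every $P\in U$, which is exactly $\supp_X(\cc F)\subseteq Z$; this is a purely stalkwise statement and needs no finiteness. For the torsion functor, $t_{\cc S}(\cc F)$ is by definition the largest quasi-coherent subsheaf of $\cc F$ supported on $Z$, which is the description of $\cc H^0_Z(\cc F)$ — though here one must check that the subsheaf of sections supported on $Z$ is itself quasi-coherent, and this is where quasi-compactness of $j$ is used: locally on an affine $\spec A\subseteq X$ one is back to the module statement $\cc H^0_Z(M)=\{m\in M: I^k m=0\text{ for some }k\}$ with $I$ finitely generated, and finite generation is exactly what makes this submodule behave well under localization, i.e.\ quasi-coherent. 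I expect this last point — quasi-coherence of $\cc H^0_Z$, equivalently the exchange of the torsion functor with restriction, hinging on the finite generation coming from quasi-compactness of $j$ — to be the only genuine subtlety; the rest is assembling standard facts and the abstract localization machinery already recalled in the text.
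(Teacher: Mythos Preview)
Your proposal is correct and follows the same logical skeleton as the paper's proof, but the paper is considerably terser: it simply cites EGA~I (6.9.2) for both the quasi-coherence of $j_*\cc G$ and the identity $j^*j_*\cc G=\cc G$, rather than reducing to the affine case and gluing by hand as you do. The one place the arguments genuinely diverge is the identification $t_{\cc S}(\cc F)=\cc H^0_Z(\cc F)$. You plan to verify quasi-coherence of $\cc H^0_Z(\cc F)$ directly by a local computation with finitely generated ideals; the paper instead invokes the exact sequence $0\to\cc H^0_Z(\cc F)\to\cc F\xrightarrow{\rho_{\cc F}} j_*j^*(\cc F)$ from Hartshorne and observes that $\rho_{\cc F}$ is precisely the $\cc S$-envelope of $\cc F$, so its kernel is $t_{\cc S}(\cc F)$ by the general localization theory already recalled, and quasi-coherence of $\cc H^0_Z(\cc F)$ comes for free as the kernel of a morphism between quasi-coherent sheaves. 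This neatly bypasses what you correctly identified as the only real subtlety; on the other hand, your explicit affine-first treatment has the merit of making visible exactly where and how the quasi-compactness of $j$ is used.
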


\begin{proof}
The fact that $j_*(\cc G)$ is a quasi-coherent $\cc O_X$-module
follows from~\cite[I.6.9.2]{GD}. The functor $j^*:\cc F\mapsto\cc
F|_U$ is clearly exact, $j_*(\cc G)|_U=j^*j_*(\cc G)=\cc G$
by~\cite[I.6.9.2]{GD}. It follows that $j_*$ is fully faithful, and
hence $(j^*,j_*)$ is a localization pair.

The fact that $\cc F\in\cc S$ if and only if $\supp_X(\cc
F)\subseteq Z$ is obvious. Finally, by~\cite[Ex. II.1.20]{Har} we
have an exact sequence
   $$0\to\cc H_Z^0(\cc F)\to\cc F\lra{\rho_{\cc F}}j_*j^*(\cc F).$$
Since the morphism $\rho_{\cc F}$ can be regarded as an $\cc
S$-envelope for $\cc F$, we see that $\kr\rho_{\cc F}=t_{\cc S}(\cc
F)=\cc H_Z^0(\cc F)$.
\end{proof}

Given a subcategory $\cc X$ of a Grothendieck category $\cc C$, we
denote by $\surd\cc X$ the smallest localizing subcategory of $\cc
C$ containing $\cc X$. To describe $\surd\cc X$ intrinsically, we
need the notion of a subquotient.

\begin{defs}{\rm

Given objects $A,B\in\cc C$, we say that $A$ is a {\it
subquotient\/} of $B$, or $A\prec B$, if there is a filtration of
$B$ by subobjects $B=B_0\geq B_1\geq B_2\geq 0$ such that $A\cong
B_1/B_2$. In other words, $A$ is isomorphic to a subobject of a
quotient object of $B$.

}\end{defs}

Given a subcategory $\cc X$ of $\cc C$, we denote by $\langle\cc
X\rangle$ the full subcategory of subquotients of objects from $\cc
X$. Clearly, $\langle\cc X\rangle=\langle\langle\cc
X\rangle\rangle$, for the relation $A\prec B$ is transitive, and
$\cc X=\langle\cc X\rangle$ if and only if $\cc X$ is closed under
subobjects and quotient objects. If $\cc X$ is closed under direct
sums then so is $\langle\cc X\rangle$.

\begin{prop}\label{surd}%\marginpar{surd}
Given  a subcategory $\cc X$ of a Grothendieck category $\cc C$, an
object $X\in\surd\cc X$ if and only if there is a filtration
   $$X_0\subset X_1\subset\cdots\subset X_\beta\subset\cdots$$
such that $X=\bigcup_\beta X_\beta$,
$X_\gamma=\bigcup_{\beta<\gamma} X_\beta$ if $\gamma$ is a limit
ordinal, and $X_0,X_{\beta+1}/X_\beta\in\langle\cc X^\ps\rangle$
with $\cc X^\ps$ standing for the subcategory of $\cc C$ consisting
of direct sums of objects in $\cc X$.
\end{prop}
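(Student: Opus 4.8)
The statement characterizes membership in $\surd\cc X$ via transfinite filtrations with successive quotients in $\langle\cc X^\ps\rangle$. I would prove this by showing that the class $\cc D$ of all objects $X$ admitting such a filtration is itself a localizing subcategory containing $\cc X$, and that it is contained in every localizing subcategory containing $\cc X$; this pins it down as $\surd\cc X$.

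\emph{First, the easy inclusion.} If $\cc S$ is any localizing subcategory with $\cc X\subseteq\cc S$, then $\cc S$ contains all direct sums of objects of $\cc X$ (localizing subcategories are closed under coproducts), hence $\cc X^\ps\subseteq\cc S$, and since $\cc S$ is a Serre subcategory it is closed under subobjects and quotients, so $\langle\cc X^\ps\rangle\subseteq\cc S$. Now given $X\in\cc D$ with filtration $(X_\beta)$, a transfinite induction on $\beta$ shows $X_\beta\in\cc S$ for all $\beta$: the successor step uses that $X_{\beta+1}$ is an extension of $X_{\beta+1}/X_\beta\in\langle\cc X^\ps\rangle\subseteq\cc S$ by $X_\beta\in\cc S$ and that $\cc S$ is closed under extensions; the limit step uses that $X_\gamma=\bigcup_{\beta<\gamma}X_\beta=\varinjlim_{\beta<\gamma}X_\beta$ and that $\cc S$ is closed under direct limits. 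Hence $X=\bigcup_\beta X_\beta\in\cc S$. This gives $\cc D\subseteq\surd\cc X$ (taking $\cc S=\surd\cc X$).

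\emph{Second, the reverse inclusion.} Here I must check $\cc D$ is localizing and contains $\cc X$. Containment is immediate: an object $A\in\cc X$ has the two-step filtration $0\subset A$. For the Serre property, suppose $0\to X'\to X\to X''\to 0$ is exact. If $X\in\cc D$, intersecting the filtration of $X$ with $X'$ gives a filtration of $X'$ whose quotients are subobjects of the quotients of $X$'s filtration, hence in $\langle\cc X^\ps\rangle$ (using $\langle\cc X^\ps\rangle=\langle\langle\cc X^\ps\rangle\rangle$ and closure under subobjects); similarly the images in $X''$ give a filtration of $X''$ with quotients that are quotients of the original ones, so also in $\langle\cc X^\ps\rangle$. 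Conversely, if $X',X''\in\cc D$, concatenate: run the filtration of $X'$, then continue with the preimages in $X$ of the filtration of $X''$; the new successive quotients are exactly those of $X''$'s filtration, so $X\in\cc D$. For closure under direct limits it suffices — since $\cc C$ is Grothendieck and $\cc D$ will be Serre — to treat coproducts and filtered colimits of subobjects; a coproduct $\bigoplus_i X_i$ of objects in $\cc D$ is handled by concatenating (well-ordering the index set and stacking the filtrations, taking unions at limit stages), and an arbitrary direct limit is a quotient of a coproduct of the $X_i$, hence in $\cc D$ by the Serre property already established. Therefore $\cc D$ is localizing, $\surd\cc X\subseteq\cc D$, and the two inclusions give $\surd\cc X=\cc D$, which is the assertion.

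\emph{Main obstacle.} The genuinely delicate point is the bookkeeping for \emph{concatenating} transfinite filtrations (for extensions and for arbitrary coproducts): one must verify that gluing a filtration indexed by ordinals $<\alpha$ to one indexed by ordinals $<\alpha'$ again yields a chain satisfying the limit-ordinal continuity condition $X_\gamma=\bigcup_{\beta<\gamma}X_\beta$, and that stacking coproduct summands respects this at limit stages. This is routine ordinal arithmetic but must be done carefully; everything else (the Serre-subcategory gymnastics, the adjunction-free part) is formal and follows directly from the closure properties recalled above and from $\langle\cc X^\ps\rangle=\langle\langle\cc X^\ps\rangle\rangle$.
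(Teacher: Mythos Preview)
Your proposal is correct and follows essentially the same strategy as the paper: show that the class $\cc D$ of objects admitting such a transfinite filtration is itself a localizing subcategory containing $\cc X$ (handling extensions by concatenation, subobjects and quotients by intersecting/projecting the filtration, and coproducts by stacking), while the reverse inclusion is immediate from closure properties of localizing subcategories. The paper is somewhat terser---it declares closure under direct sums ``plain'' and the easy inclusion ``easy to see''---but your more explicit bookkeeping, including the observation that arbitrary direct limits are quotients of coproducts, matches the paper's argument step for step.
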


\begin{proof}
It is easy to see that every object having such a filtration belongs
to $\surd\cc X$. It is enough to show that the full subcategory $\cc
S$ of such objects is localizing. Let
   $$X\rightarrowtail Y\bl g\twoheadrightarrow Z$$
be a short exact sequence with $X,Z\in\cc S$. Let
$X_0\subset\cdots\subset X_\beta\subset\cdots$ and
$Z_0\subset\cdots\subset Z_\alpha\subset\cdots$ be the corresponding
filtrations. Put $Y_\alpha=g^{-1}(Z_\alpha)$. Then we have a short
exact sequence for any $\alpha$
   $$X\rightarrowtail Y_\alpha\bl{g_\alpha}\twoheadrightarrow Z_\alpha$$
with $Y_{\alpha+1}/Y_\alpha\cong Z_{\alpha+1}/Z_\alpha$. We have the
following filtration for $Y$:
   $$X_0\subset\cdots\subset X_\beta\subset\cdots\subset X=\bigcup_\beta X_\beta\subset Y_0\subset\cdots\subset Y_\alpha\subset\cdots$$
It follows that $Y\in\cc S$. We see that $\cc S$ is closed under
extensions.

Now let $Y\in\cc S$ with a filtration $Y_0\subset\cdots\subset
Y_\alpha\subset\cdots$. Set $X_\alpha=X\cap Y_\alpha$ and
$Z_\alpha=Y_\alpha/X_\alpha$. We get filtrations
$X_0\subset\cdots\subset X_\alpha\subset\cdots$ and
$Z_0\subset\cdots\subset Z_\alpha\subset\cdots$ for $X$ and $Z$
respectively. Thus $X,Z\in\cc S$, and so $\cc S$ is a Serre
subcategory. It is plainly closed under direct sums, hence it is
localizing.
\end{proof}

\begin{cor}\label{vrp}%\marginpar{vrp}
Let $\cc X$ be a subcategory in $\cc C$ closed under subobjects,
quotient objects, and direct sums. An object $M\in\cc C$ is
$\surd\cc X$-closed if and only if $\Hom(X,M)=\Ext^1(X,M)=0$ for all
$X\in\cc X$.
\end{cor}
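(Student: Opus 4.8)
The plan is to derive the corollary from Proposition~\ref{surd} together with the classical homological description of the objects of a quotient category. Set $\cc S=\surd\cc X$. The whole statement reduces to the fact (standard in Gabriel localization theory) that \emph{$M$ is $\cc S$-closed if and only if $\Hom(S,M)=\Ext^1(S,M)=0$ for every $S\in\cc S$}, or equivalently that $M$ is $\cc S$-closed exactly when $\Hom_{\cc C}(-,M)$ sends every morphism with kernel and cokernel in $\cc S$ to a bijection.

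I would first record a proof of that fact. If $M$ is $\cc S$-closed then $\Hom(S,M)=\Hom_{\cc C/\cc S}(S_{\cc S},M)=\Hom_{\cc C/\cc S}(0,M)=0$ by the adjunction recalled above, and for any extension $0\to M\to E\to S\to 0$ the inclusion $M\hookrightarrow E$ has kernel $0$ and cokernel $S\in\cc S$, hence induces a bijection $\Hom(E,M)\lra{\sim}\Hom(M,M)$; a preimage of $\id_M$ splits the extension, so $\Ext^1(S,M)=0$. Conversely, if $\Hom(S,M)=\Ext^1(S,M)=0$ for all $S\in\cc S$, then $M$ is $\cc S$-torsionfree (else the inclusion $t_{\cc S}(M)\hookrightarrow M$ would be a nonzero map from an object of $\cc S$), so the canonical morphism $\lambda_M\colon M\to M_{\cc S}$ is a monomorphism with cokernel in $\cc S$; it splits because $\Ext^1(M_{\cc S}/M,M)=0$, and since $M_{\cc S}$ is $\cc S$-torsionfree its direct summand $M_{\cc S}/M\in\cc S$ vanishes, so $M=M_{\cc S}$ is $\cc S$-closed.

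Granting this, the ``only if'' direction of the corollary is immediate since $\cc X\subseteq\cc S$. For the converse, assume $\Hom(X,M)=\Ext^1(X,M)=0$ for every $X\in\cc X$; it must be shown that the same holds for every $S\in\cc S$. As $\cc X$ is closed under subobjects, quotient objects and direct sums, one has $\langle\cc X^\ps\rangle=\cc X$, so Proposition~\ref{surd} supplies, for each $S\in\cc S$, a continuous filtration $X_0\subset X_1\subset\cdots\subset X_\beta\subset\cdots$ with $S=\bigcup_\beta X_\beta$ and with $X_0$ and every $X_{\beta+1}/X_\beta$ in $\cc X$. One then proves by transfinite induction that $\Hom(X_\beta,M)=\Ext^1(X_\beta,M)=0$ for all $\beta$, and finally for $S$: the base case is the hypothesis; at a successor, applying $\Hom(-,M)$ to $0\to X_\beta\to X_{\beta+1}\to X_{\beta+1}/X_\beta\to 0$ yields the vanishing for $X_{\beta+1}$ from the long exact $\Ext$-sequence; at a limit ordinal, and for $S$ itself, $\Hom(\varinjlim X_\beta,M)=\varprojlim_\beta\Hom(X_\beta,M)=0$ by continuity of the filtration.

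The step that needs genuine care --- and where I expect the main obstacle --- is the vanishing of $\Ext^1$ at limit stages, which is Eklof's lemma. Given an extension $0\to M\to E\to X_\gamma\to 0$ with $\gamma$ a limit ordinal and epimorphism $\pi\colon E\to X_\gamma$, set $E_\beta=\pi^{-1}(X_\beta)$; one constructs compatible splittings $s_\beta\colon X_\beta\to E_\beta$ by recursion, using $\Ext^1(X_0,M)=0$ to begin, the continuity of $(E_\beta)$ --- which relies on the axiom AB5 valid in the Grothendieck category $\cc C$ --- to pass through limits, and $\Ext^1(X_{\beta+1}/X_\beta,M)=0$ to extend $s_\beta$ over $X_{\beta+1}$. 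The union $\bigcup_\beta s_\beta$ splits the original extension, giving $\Ext^1(X_\gamma,M)=0$. With the induction complete, $\Hom(S,M)=\Ext^1(S,M)=0$ for all $S\in\cc S$, and the characterization from the first paragraph shows $M$ is $\surd\cc X$-closed.
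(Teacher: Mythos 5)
Your proposal is correct and follows essentially the same path as the paper: reduce to the filtration of Proposition~\ref{surd}, then run a transfinite induction with the long exact $\Ext$-sequence at successor stages and an Eklof-type argument (building compatible splittings up the filtration) at limit stages. The only difference is cosmetic --- you spell out the standard Gabriel-theory characterization of $\cc S$-closed objects via $\Hom$ and $\Ext^1$, which the paper treats as part of its description of the quotient category, and the paper gets compatibility of the splittings from uniqueness (via $\Hom(Y_\beta,M)=0$) rather than by recursive construction, but these amount to the same thing.
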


\begin{proof}
Suppose $\Hom(X,M)=\Ext^1(X,M)=0$ for all $X\in\cc X$. We have to
check that $\Hom(Y,M)=\Ext^1(Y,M)=0$ for all $Y\in\surd\cc X$. By
Proposition~\ref{surd} there is a filtration
   $$Y_0\subset Y_1\subset\cdots\subset Y_\beta\subset\cdots$$
such that $Y=\bigcup_\beta Y_\beta$,
$Y_\gamma=\bigcup_{\beta<\gamma}Y_\beta$ if $\gamma$ is a limit
ordinal, and $Y_0,Y_{\beta+1}/Y_\beta\in\langle\cc X^\ps\rangle=\cc
X$. One has an exact sequence for any $\beta$
   \begin{gather*}
    \Hom(Y_{\beta+1}/Y_\beta,M)\to\Hom(Y_{\beta+1},M)\to\Hom(Y_\beta,M)\to\\
    \to\Ext^1(Y_{\beta+1}/Y_\beta,M)\to\Ext^1(Y_{\beta+1},M)\to\Ext^1(Y_\beta,M).
   \end{gather*}
One sees that if $\Hom(Y_\beta,M)=\Ext^1(Y_{\beta},M)=0$ then
$\Hom(Y_{\beta+1},M)=\Ext^1(Y_{\beta+1},M)=0$, because
$Y_{\beta+1}/Y_\beta\in\cc X$. Since $Y_0\in\cc X$ it follows that
$\Hom(Y_{\beta},M)=\Ext^1(Y_{\beta},M)=0$ for all finite $\beta$.

Let $\gamma$ be a limit ordinal and
$\Hom(Y_{\beta},M)=\Ext^1(Y_{\beta},M)=0$ for all $\beta<\gamma$. We
have $\Hom(Y_{\gamma},M)=\lo_{\beta<\gamma}\Hom(Y_{\beta},M)=0$. Let
us show that $\Ext^1(Y_{\gamma},M)=0$. To see this we must prove
that every short exact sequence
   $$M\rightarrowtail N\twoheadrightarrow Y_\gamma$$
is split. One can construct a commutative diagram
   $$\xymatrix{
    E_\beta:& M\ar@{=}[d]\ar@{ >->}[r]&N_\beta\ar@{ >->}[d]\ar@{->>}[r]^{p_\beta}&Y_{\beta}\ar@{ >->}[d]\ar@/^/[l]^{\exists\kappa_\beta}\\
    E_\gamma:& M\ar@{ >->}[r]&N\ar@{->>}[r]^(.45)p&Y_\gamma
     }$$
with $N_\beta=p^{-1}(Y_\beta)$. Clearly, $E_\gamma=\bigcup_\beta
E_\beta$. Since the upper row splits, there exists a morphism
$\kappa_\beta$ such that $p_\beta\kappa_\beta=1$. Consider the
following commutative diagram:
   $$\xymatrix{
     M\ar@{=}[d]\ar@{ >->}[r]&N_\beta\ar@{ >->}[d]_{u_\beta}\ar@{->>}[r]^{p_\beta}&Y_{\beta}\ar@{ >->}[d]^{v_\beta}\\
     M\ar@{
     >->}[r]&N_{\beta+1}\ar@{->>}[r]^(.45){p_{\beta+1}}&Y_{\beta+1}
     }$$
We want to check that
$\kappa_{\beta+1}v_{\beta}=u_\beta\kappa_{\beta}$. Since the right
square is cartesian and
$p_{\beta+1}\kappa_{\beta+1}v_{\beta}=v_\beta$, there exists a
unique morphism $\tau:Y_\beta\to N_\beta$ such that
$p_\beta\tau=1$ and $u_\beta\tau=\kappa_{\beta+1}v_{\beta}$. We
claim that $\tau=\kappa_\beta$. Indeed,
$p_\beta(\tau-\kappa_\beta)=0$ and hence $\tau-\kappa_\beta$
factors through $M$. The latter is possible only if
$\tau-\kappa_\beta=0$, for $\Hom(Y_\beta,M)=0$ by assumption.
Therefore $\tau=\kappa_\beta$. It follows that the family of
morphisms $\kappa_\beta:Y_\beta\to N_\beta$ is directed and then
$p\circ(\lp\kappa_\beta)=(\lp p_\beta)\circ(\lp\kappa_\beta)=\lp
(p_\beta\kappa_\beta)=1$. Thus $p$ is split.
\end{proof}

Recall that the {\it injective spectrum\/} or the {\it Gabriel
spectrum\/} $\inj\cc C$ of a Grothendieck category $\cc C$ is the
set of isomorphism classes of injective objects in $\cc C$. It
plays an important role in our analysis. Given a subcategory $\cc
X$ in $\cc C$ we denote by
   $$(\cc X)=\{E\in\inj\cc C\mid\Hom_{\cc C}(X,E)\neq 0\textrm{ for some } X\in\cc X\}.$$
Using Proposition~\ref{loc} and the fact that the functor
$\Hom(-,E)$, $E\in\inj\cc C$, is exact, we have $(\cc
X)=\bigcup_{X\in\cc X}(X)=(\surd\cc X)$.

\begin{prop}\label{tp}%\marginpar{tp}
The collection of subsets of $\inj\cc C$,
   $$\{(\cc S)\mid\cc S\subset\cc C \textrm{ is a localizing subcategory}\},$$
satisfies the axioms for the open sets of a topology on the
injective spectrum $\inj\cc C$. This topological space will be
denoted by $\inj_{gab}\cc C$. Moreover, the map
   \begin{equation}\label{11}
    \cc S\longmapsto(\cc S)
   \end{equation}
is an inclusion-preserving bijection between the localizing
subcategories $\cc S$ of $\cc C$ and the open subsets of
$\inj_{gab}\cc C$.
\end{prop}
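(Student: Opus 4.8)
The plan is to verify directly that $\{(\cc S)\mid \cc S\text{ localizing}\}$ forms a topology, and then to exhibit the inverse of the map \eqref{11}. First I would check the topology axioms. The empty set arises as $(0)$ and the whole space as $(\cc C)$. For arbitrary unions, given a family $\{\cc S_i\}_{i\in I}$ of localizing subcategories, let $\cc S=\surd(\bigcup_i\cc S_i)$ be the smallest localizing subcategory containing all of them; using the remark preceding the proposition that $(\cc X)=(\surd\cc X)$ together with the evident identity $(\bigcup_i\cc S_i)=\bigcup_i(\cc S_i)$, one gets $\bigcup_i(\cc S_i)=(\bigcup_i\cc S_i)=(\surd\bigcup_i\cc S_i)=(\cc S)$, which is again of the required form. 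The remaining and more substantial point is closure under finite intersection: I must show $(\cc S_1)\cap(\cc S_2)=(\cc S_1\cap\cc S_2)$, where $\cc S_1\cap\cc S_2$ is visibly a localizing subcategory. The inclusion $\supseteq$ is trivial. For $\subseteq$, suppose $E\in\inj\cc C$ admits nonzero maps from some $X_1\in\cc S_1$ and some $X_2\in\cc S_2$; I need a nonzero map from an object of $\cc S_1\cap\cc S_2$. The natural move is to replace $X_1$ by the injective envelope $E(X_1)$: since $E$ is injective, $\Hom(X_1,E)\ne 0$ forces $E$ to contain a copy of $E(U)$ for some uniform subobject $U\le X_1$ — or, more cleanly, $\Hom(X_1,E)\ne0$ iff $E(X_1)$ and $E$ share an indecomposable injective summand. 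Working at the level of indecomposable injectives: each $(\cc S)$ is a union of basic open sets $(X)$, $X\in\cc S$, and $E\in(X)$ precisely when some indecomposable summand of $E$ embeds in $E(X)$. Then $E\in(\cc S_1)\cap(\cc S_2)$ means a common indecomposable injective summand $E'$ of $E$ embeds into both $E(X_1)$ and $E(X_2)$; taking a nonzero subobject $Z\le X_1$ with $Z\hookrightarrow E'$, the same $Z$ (being a subobject of $E'$, hence essentially a subobject relevant to $X_2$) lies in both $\cc S_1$ (subobject of $X_1\in\cc S_1$) and — after pulling back along $E'\hookrightarrow E(X_2)$ to a nonzero subobject of $X_2$ — forces $E\in(\cc S_1\cap\cc S_2)$. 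I expect this indecomposable-summand bookkeeping to be the main obstacle, and the clean way to organize it is via the torsion functors $t_1,t_2$ for $\cc S_1,\cc S_2$ and the observation that $E\in(\cc S_j)$ iff $t_j E\ne 0$, reducing the claim to $t_1E\ne0$ and $t_2E\ne0$ implies $(t_1t_2)E=t_{\cc S_1\cap\cc S_2}E\ne0$, which holds because $t_1E$ is an injective object of $\cc S_1$ with $t_2(t_1E)$ its $\cc S_2$-torsion part, and nonvanishing is inherited through the essential-extension structure of injectives.

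Next I would establish that \eqref{11} is a bijection by producing an explicit inverse. Given an open set $(\cc S)$, I recover $\cc S$ as the full subcategory of all $X\in\cc C$ with $(X)\subseteq(\cc S)$; call this $\cc S'$. Clearly $\cc S\subseteq\cc S'$. For the reverse inclusion I use Corollary \ref{vrp} together with Proposition \ref{surd}: $\cc S$ is determined by the injectives it kills, since $M$ is $\cc S$-closed iff $\Hom(X,M)=\Ext^1(X,M)=0$ for all $X\in\cc S$ (after closing $\cc S$ under sub- and quotient objects, which it already is), and in a Grothendieck category it suffices to test on injective $M$, where $\Ext^1$ vanishes automatically. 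Thus $M\in\inj\cc C$ is $\cc S$-closed iff $M\notin(\cc S)$. Now if $X\in\cc S'$, i.e.\ $(X)\subseteq(\cc S)$, then every injective $\cc S$-closed object $M$ satisfies $M\notin(\cc S)\supseteq(X)$, so $\Hom(X,M)=0$; hence $X$ maps to $0$ in the quotient $\cc C/\cc S$, i.e.\ $X_{\cc S}=0$, which means $X\in\cc S$. Therefore $\cc S'=\cc S$, so $(\cc S_1)=(\cc S_2)$ forces $\cc S_1=\cc S_2$ (injectivity), and every open set is of the form $(\cc S)$ by construction (surjectivity). Inclusion-preservation in both directions is immediate from the definitions. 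The only delicate point here is justifying that "$X_{\cc S}=0$ implies $X\in\cc S$," which is immediate from the canonical sequence $0\to t(X)\to X\to X_{\cc S}\to A''\to 0$ recalled in Section 2: if $X_{\cc S}=0$ then $X=t(X)\in\cc S$.

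Putting these together: the finite-intersection verification is the analytically substantive step, while the bijection is a formal consequence of the adjunction/torsion formalism of Section 2 and Corollary \ref{vrp}. I would present the topology axioms first (emptiness, whole space, unions being quick), isolate finite intersections as a lemma phrased via torsion functors, and then close with the inverse-map argument for the bijection.
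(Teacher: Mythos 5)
Your treatment of the empty set, the whole space, arbitrary unions, and the bijection matches the paper and is sound. But the finite-intersection step — which you correctly identify as the crux — has a genuine gap. The paper's argument is a one-liner resting on the fact that each point $E\in\inj\cc C$ is an \emph{indecomposable} injective, hence \emph{uniform}: any two nonzero subobjects of $E$ have nonzero intersection, so $t_{\cc S_1}(E)\neq 0$ and $t_{\cc S_2}(E)\neq 0$ force $0\neq t_{\cc S_1}(E)\cap t_{\cc S_2}(E)\in\cc S_1\cap\cc S_2$. You never invoke uniformity. Your first attempt, phrased via indecomposable summands, asserts that $E\in(\cc S_1)\cap(\cc S_2)$ gives a \emph{common} indecomposable summand $E'$ of $E$ embedding in both $E(X_1)$ and $E(X_2)$ — but this is precisely what fails for decomposable injectives. (Take $\cc C=\mathrm{Ab}$, $\cc S_1$ the $p$-torsion groups, $\cc S_2$ the $q$-torsion groups with $p\neq q$, and $E=\bb Q/\bb Z$: then $t_{\cc S_1}(E)$ and $t_{\cc S_2}(E)$ are both nonzero, yet $\cc S_1\cap\cc S_2=0$. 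So the statement is simply false unless the points of $\inj\cc C$ are indecomposable.) Your second "clean" version correctly reduces to showing $t_1E\neq 0, t_2E\neq 0\Rightarrow t_1t_2E=t_{\cc S_1\cap\cc S_2}E\neq 0$, but the justification offered — that $t_1E$ is injective in $\cc S_1$ and "nonvanishing is inherited through the essential-extension structure" — is not an argument; the injectivity of $t_1E$ in $\cc S_1$ is beside the point. What makes it work is that $t_1E$ and $t_2E$ are two nonzero subobjects of the uniform object $E$, so they intersect. You should make explicit from the outset that the points of $\inj\cc C$ are the (isomorphism classes of) indecomposable injective objects, hence uniform, and then the intersection axiom falls out immediately.

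The bijection argument you give is correct and amounts to the same observation as the paper's: $\cc S$ is recoverable from $(\cc S)$ because $X\in\cc S$ iff $\Hom(X,E)=0$ for every $E\in\inj\cc C\setminus(\cc S)$. Your route via $\cc S'=\{X\mid (X)\subseteq(\cc S)\}$ and the identification of $\cc S$-closed injectives with those outside $(\cc S)$ is a slightly longer but equivalent way to say this, and the final step $X_{\cc S}=0\Rightarrow X\in\cc S$ is handled correctly. Note, however, that the implicit appeal "take the injective envelope of $X_{\cc S}$ and it gives a point of $\inj\cc C$" again presumes enough indecomposable injectives — the same uniformity point — so you should surface that assumption rather than leave it tacit.
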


\begin{proof}
First note that $(0)=\emptyset$ and $(\cc C)=\inj\cc C$. We have
$(\cc S_1)\cap(\cc S_2)=(\cc S_1\cap\cc S_2)$ because every
$E\in\inj\cc C$ is uniform and $0\neq t_{\cc S_1}(E)\cap t_{\cc
S_2}(E)\in\cc S_1\cap\cc S_2$ whenever $E\in(\cc S_1)\cap(\cc
S_2)$. Also, $\cup_{i\in I}(\cc S_i)=(\cup_{i\in I}\cc
S_i)=(\surd\cup_{i\in I}\cc S_i)$.

The map~\eqref{11} is plainly bijective, because every localizing
subcategory $\cc S$ consists precisely of those objects $X$ such
that $\Hom(X,E)=0$ for all $E\in\inj\cc C\setminus(\cc S)$.
\end{proof}

Given a localizing subcategory $\cc S$ in $\cc C$, the injective
spectrum $\inj_{gab}(\cc C/\cc S)$ can be considered as the closed
subset $\inj_{gab}\cc C\setminus(\cc S)$. Moreover, the inclusion
   $$\inj_{gab}(\cc C/\cc S)\hookrightarrow\inj_{gab}\cc C$$
is a closed map. Indeed, if $U$ is a closed subset in
$\inj_{gab}(\cc C/\cc S)$ then there is a unique localizing
subcategory $\cc T$ in $\cc C/\cc S$ such that $U=\inj_{gab}(\cc
C/\cc S)\setminus(\cc T)$. By~\cite[1.7]{G} there is a unique
localizing subcategory $\cc P$ in $\cc C$ containing $\cc S$ such
that $\cc C/\cc P$ is equivalent to $(\cc C/\cc S)/\cc T$. It
follows that $U=\inj_{gab}\cc C\setminus(\cc P)$, hence $U$ is
closed in $\inj_{gab}\cc C$.

On the other hand, let $\cc Q$ be a localizing subcategory of $\cc
C$. Let us show that $O:=(\cc Q)\cap\inj_{gab}(\cc C/\cc S)$ is an
open subset in $\inj_{gab}(\cc C/\cc S)$.

\begin{lem}\label{lenya}%\marginpar{lenya}
Let $\wh{\cc Q}$ denote the full subcategory of objects of the
form $X_{\cc S}$ with $X\in\cc Q$. Then $\wh{\cc Q}$ is closed
under direct sums, subobjects, quotient objects in $\cc C/\cc S$
and $O=(\surd\wh{\cc Q})$. Moreover, if $\cc T$ is the unique
localizing subcategory of $\cc C$ containing $\cc S$ such that
$\cc C/\cc T\cong(\cc C/\cc S)/\surd\wh{\cc Q}$ then the following
relation is true:
   $$\cc T=\surd({\cc Q}\cup\cc S),$$
that is $\cc T$ is the smallest localizing subcategory containing
$\cc Q$ and $\cc S$. We shall also refer to $\cc T$ as the {\em
join} of $\cc Q$ and $\cc S$.
\end{lem}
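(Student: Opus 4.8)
The plan is to verify the three structural properties of $\wh{\cc Q}$ one at a time, using the exactness of the localizing functor $(-)_{\cc S}$ and the universal property $\Hom_{\cc C}(X,Y)\cong\Hom_{\cc C/\cc S}(X_{\cc S},Y)$ for $Y$ $\cc S$-closed, and then to identify the associated localizing subcategory $\cc T$ by a direct comparison of the quotient functors. First I would check closure under direct sums: since $(-)_{\cc S}$ is a left adjoint it commutes with coproducts, so $\bigoplus_i (X_i)_{\cc S}\cong(\bigoplus_i X_i)_{\cc S}$ lies in $\wh{\cc Q}$ whenever each $X_i\in\cc Q$, because $\cc Q$ is closed under direct sums. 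For closure under quotient objects, given $X\in\cc Q$ and an epimorphism $X_{\cc S}\twoheadrightarrow W$ in $\cc C/\cc S$, I would pull back along $\lambda_X:X\to X_{\cc S}$ to get an epimorphism $X\twoheadrightarrow X'$ in $\cc C$ with $X'\in\cc Q$ (as $\cc Q$ is closed under quotients) and then use exactness of $(-)_{\cc S}$ to see $W\cong (X')_{\cc S}$, after checking the canonical map $(X')_{\cc S}\to W$ is an isomorphism — this holds because the kernel of $X_{\cc S}\to W$ is the $\cc S$-closure of the kernel of $X\to X'$. Closure under subobjects is dual: a subobject $V\hookrightarrow X_{\cc S}$ with $X\in\cc Q$ gives, via $\lambda_X$, a subobject $X''=\lambda_X^{-1}(V)\hookrightarrow X$ with $X''\in\cc Q$, and exactness of $(-)_{\cc S}$ together with $X''\hookrightarrow X$ being an $\cc S$-essential situation yields $(X'')_{\cc S}\cong V$.

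Next I would compute $O=(\cc Q)\cap\inj_{gab}(\cc C/\cc S)$. An injective $E$ of $\cc C/\cc S$, viewed inside $\inj_{gab}\cc C$, lies in $(\cc Q)$ iff $\Hom_{\cc C}(X,E)\neq 0$ for some $X\in\cc Q$; by the adjunction this is $\Hom_{\cc C/\cc S}(X_{\cc S},E)\neq 0$, i.e.\ $E\in(\wh{\cc Q})$ computed in $\cc C/\cc S$. Since $\wh{\cc Q}$ is closed under subquotients and direct sums, the remark preceding Proposition~\ref{tp} gives $(\wh{\cc Q})=(\surd\wh{\cc Q})$, so $O=(\surd\wh{\cc Q})$ as an open subset of $\inj_{gab}(\cc C/\cc S)$.

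Finally, to prove $\cc T=\surd(\cc Q\cup\cc S)$, I would use the correspondence from \cite[1.7]{G} between localizing subcategories of $\cc C$ containing $\cc S$ and localizing subcategories of $\cc C/\cc S$: $\cc T$ is the preimage of $\surd\wh{\cc Q}$ under $(-)_{\cc S}$, i.e.\ $\cc T=\{X\in\cc C\mid X_{\cc S}\in\surd\wh{\cc Q}\}$. Clearly $\cc S\subseteq\cc T$ (as $X_{\cc S}=0$ for $X\in\cc S$) and $\cc Q\subseteq\cc T$ (as $X_{\cc S}\in\wh{\cc Q}\subseteq\surd\wh{\cc Q}$), so $\surd(\cc Q\cup\cc S)\subseteq\cc T$. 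For the reverse inclusion, let $\cc R=\surd(\cc Q\cup\cc S)$; it contains $\cc S$, so it corresponds to a localizing subcategory $\overline{\cc R}$ of $\cc C/\cc S$, and $\overline{\cc R}$ contains every $X_{\cc S}$ with $X\in\cc Q$, hence contains $\wh{\cc Q}$ and therefore $\surd\wh{\cc Q}$; translating back, $\cc T\subseteq\cc R$. The main obstacle I anticipate is the careful bookkeeping in the subobject/quotient arguments — namely checking that the canonical comparison maps $(X')_{\cc S}\to W$ and $(X'')_{\cc S}\to V$ really are isomorphisms, which comes down to the fact that $(-)_{\cc S}$ is exact and kills exactly the objects of $\cc S$, so that forming $\cc S$-envelopes commutes with the relevant images and kernels.
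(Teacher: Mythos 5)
Your proposal is correct and, for the first two assertions (the closure properties of $\wh{\cc Q}$ and the identification $O=(\surd\wh{\cc Q})$), follows essentially the same route as the paper: both pull subobjects and quotients back along the $\cc S$-envelope $\lambda_X$ and use exactness of $(-)_{\cc S}$ to conclude, and both obtain $O=(\surd\wh{\cc Q})$ from the adjunction $\Hom_{\cc C}(X,E)\cong\Hom_{\cc C/\cc S}(X_{\cc S},E)$ together with $(\wh{\cc Q})=(\surd\wh{\cc Q})$. The one point where you diverge is the identification $\cc T=\surd(\cc Q\cup\cc S)$: the paper reads it off from the injective-spectrum bijection of Proposition~\ref{tp} by computing
$(\cc T)=(\cc Q)\cup(\cc S)=(\surd(\cc Q\cup\cc S))$, whereas you argue directly with the Gabriel correspondence \cite[1.7]{G}, i.e.\ $\cc T=\{X\in\cc C\mid X_{\cc S}\in\surd\wh{\cc Q}\}$, and verify the two inclusions $\surd(\cc Q\cup\cc S)\subseteq\cc T$ and $\cc T\subseteq\surd(\cc Q\cup\cc S)$ purely at the level of lattices of localizing subcategories. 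Both routes are sound; the paper's is a one-liner once the injectivity of $\cc S\mapsto(\cc S)$ is in hand, while yours is more intrinsic and independent of the injective spectrum. One small point worth tightening in your write-up: the phrase ``$\cc S$-essential situation'' in the subobject step is doing real work --- what you need is that $\lambda_X|_{X''}\colon X''\to V$ has kernel $\kr\lambda_X\cap X''\subseteq t_{\cc S}(X)$ and cokernel embedding into $X_{\cc S}/\wt X$, both of which lie in $\cc S$, so that $(-)_{\cc S}$ sends it to an isomorphism; stating it this way removes any ambiguity.
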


\begin{proof}
First let us prove that $\wh{\cc Q}$ is closed under direct sums,
subobjects, quotient objects in $\cc C/\cc S$. It is plainly
closed under direct sums. Let $Y$ be a subobject of $X_{\cc S}$,
$X\in\cc Q$, and let $\lambda_X:X\to X_{\cc S}$ be the $\cc
S$-envelope for $X$. Then $W=\lambda^{-1}_X(Y)$ is a subobject of
$X$, hence it belongs to $\cc Q$, and $Y=W_{\cc S}$. If $Z$ is a
$\cc C/\cc S$-quotient of $X_{\cc S}$ and $\pi:X_{\cc
S}\twoheadrightarrow Z$ is the canonical projection, then
$Z=V_{\cc S}$ with $V=X/{\kr({\pi\lambda_X}})\in\cc Q$. So
$\wh{\cc Q}$ is also closed under subobjects and quotient objects
in $\cc C/\cc S$.

It follows that $\langle\wh{\cc Q}^\ps\rangle=\wh{\cc Q}$ and
$(\wh{\cc Q})=(\surd\wh{\cc Q})$. On the other hand, $O=(\wh{\cc
Q})$ as one easily sees. Thus $O$ is open in $\inj_{gab}(\cc C/\cc
S)$.

Clearly,
   $$(\cc T)=(\cc Q)\cup(\cc S)=(\cc Q\cup\cc S)=(\surd(\cc Q\cup\cc S)).$$
By Proposition~\ref{tp} $\cc T=\surd(\cc Q\cup\cc S)$.
\end{proof}

We summarize the above arguments as follows.

\begin{prop}\label{len}%\marginpar{len}
Given a localizing subcategory $\cc S$ in $\cc C$, the topology on
$\inj_{gab}(\cc C/\cc S)$ coincides with the subspace topology
induced by $\inj_{gab}\cc C$.
\end{prop}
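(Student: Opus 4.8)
The plan is to show that the two topologies on $\inj\cc C\setminus(\cc S)$ — namely the subspace topology inherited from $\inj_{gab}\cc C$, and the intrinsic Gabriel topology of the quotient category $\cc C/\cc S$ under the identification $\inj_{gab}(\cc C/\cc S)\cong\inj_{gab}\cc C\setminus(\cc S)$ — have exactly the same open sets. The identification itself is the standard one: an injective object of $\cc C/\cc S$ is the same as an $\cc S$-torsionfree injective object of $\cc C$, i.e. an injective $E\in\inj\cc C$ with $t_{\cc S}(E)=0$, which is precisely an $E\notin(\cc S)$. So the underlying sets agree, and only the topologies need to be compared.

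First I would establish that every open set of the subspace topology is open in the intrinsic topology. An open set of the subspace topology has the form $(\cc Q)\cap\big(\inj\cc C\setminus(\cc S)\big)$ for some localizing subcategory $\cc Q$ of $\cc C$; but this is exactly the set $O$ considered in Lemma~\ref{lenya}, and that lemma already shows $O=(\surd\wh{\cc Q})$, an open set of $\inj_{gab}(\cc C/\cc S)$. So this direction is immediate from Lemma~\ref{lenya}. Conversely, I would take an arbitrary intrinsic-open set, which by Proposition~\ref{tp} applied to the Grothendieck category $\cc C/\cc S$ has the form $(\cc T')$ for a localizing subcategory $\cc T'$ of $\cc C/\cc S$. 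By~\cite[1.7]{G} there is a unique localizing subcategory $\cc P$ of $\cc C$ containing $\cc S$ with $\cc C/\cc P\cong(\cc C/\cc S)/\cc T'$, and then one checks directly from the definitions that $(\cc T')=(\cc P)\cap\big(\inj\cc C\setminus(\cc S)\big)$: indeed an $\cc S$-torsionfree injective $E$ lies in $(\cc T')$ iff it is not $\cc T'$-torsionfree in $\cc C/\cc S$ iff it is not $\cc P$-closed in $\cc C$ iff $E\in(\cc P)$. Hence every intrinsic-open set is a subspace-open set. The two families of open sets coincide, proving the proposition.

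The main obstacle — though it is more bookkeeping than genuine difficulty — is keeping the correspondence between localizing subcategories of $\cc C/\cc S$ and localizing subcategories of $\cc C$ containing $\cc S$ straight, and in particular verifying the identity $(\cc T')=(\cc P)\cap\big(\inj\cc C\setminus(\cc S)\big)$ at the level of injective objects. This rests on the basic fact that, for $E\in\inj\cc C$ with $t_{\cc S}(E)=0$, the object $E$ viewed in $\cc C/\cc S$ is injective and its torsion under a larger localizing subcategory $\cc P\supseteq\cc S$ is computed compatibly — equivalently, $\Hom_{\cc C/\cc S}(-,E)\cong\Hom_{\cc C}(-,E)$ on $\cc C/\cc S$, together with the characterization of $(\cc P)$ via nonvanishing $\Hom_{\cc C}(P,E)$ with $P\in\cc P$. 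Once these compatibilities are in hand the proof is a short formal argument, and in fact, as the lead-in paragraphs before the statement make clear, both inclusions have essentially been carried out already; the proposition is the summary.

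\begin{proof}
As observed before the statement, the underlying set of $\inj_{gab}(\cc C/\cc S)$ is $\inj\cc C\setminus(\cc S)$, the set of $\cc S$-torsionfree injectives of $\cc C$. An open set of the subspace topology has the form $(\cc Q)\cap(\inj\cc C\setminus(\cc S))$ with $\cc Q$ a localizing subcategory of $\cc C$; by Lemma~\ref{lenya} this equals $(\surd\wh{\cc Q})$, hence is open in $\inj_{gab}(\cc C/\cc S)$. Conversely, an open set of $\inj_{gab}(\cc C/\cc S)$ is, by Proposition~\ref{tp}, of the form $(\cc T')$ for a localizing subcategory $\cc T'$ of $\cc C/\cc S$; by~\cite[1.7]{G} there is a localizing subcategory $\cc P$ of $\cc C$ with $\cc S\subseteq\cc P$ and $\cc C/\cc P\cong(\cc C/\cc S)/\cc T'$, and then $(\cc T')=(\cc P)\cap(\inj\cc C\setminus(\cc S))$, since for an $\cc S$-torsionfree injective $E$ one has $E\in(\cc T')$ iff $E$ is not $\cc P$-closed in $\cc C$ iff $E\in(\cc P)$. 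Thus the two topologies have the same open sets.
\end{proof}
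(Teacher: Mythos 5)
Your proof is correct and follows exactly the paper's reasoning: the paper presents no separate proof for this proposition, but instead prefaces it with ``We summarize the above arguments as follows,'' and the two arguments it summarizes are precisely your two directions — Lemma~\ref{lenya} for subspace-open $\Rightarrow$ intrinsic-open, and the passage via~\cite[1.7]{G} (stated in the paper in terms of the inclusion being a closed map) for intrinsic-open $\Rightarrow$ subspace-open. You correctly recognized this in your commentary, so the proposal matches the paper's approach.
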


\section{Finite localizations of Grothendieck categories}

In this paper we are mostly interested in finite localizations of
a Grothendieck category $\cc C$. For this we should assume some
finiteness conditions for $\cc C$.

Recall that an object $X$ of a Grothendieck category $\cc C$ is
{\it finitely generated\/} if whenever there are subobjects
$X_i\subseteq X$ with $i\in I$ satisfying $X=\sum_{i\in I}X_i$,
then there is a finite subset $J\subset I$ such that $X=\sum_{i\in
J}X_i$. The subcategory of finitely generated objects is denoted
by $\fg\cc C$. A finitely generated object $X$ is said to be {\it
finitely presented\/} if every epimorphism $\gamma:Y\to X$ with
$Y\in\fg\cc C$ has the finitely generated kernel $\kr\gamma$. By
$\fp\cc C$ we denote the subcategory consisting of finitely
presented objects. The category $\cc C$ is {\em locally finitely
presented\/} if every object $C\in\cc C$ is a direct limit $C=\lp
C_i$ of finitely presented objects $C_i$, or equivalently, $\cc C$
possesses a family of finitely presented generators. In such a
category, every finitely generated object $A\in\cc C$ admits an
epimorphism $\eta:B\to A$ from a finitely presented object $B$.
Finally, we refer to a finitely presented object $X\in\cc C$ as
{\it coherent\/} if every finitely generated subobject of $X$ is
finitely presented. The corresponding subcategory of coherent
objects will be denoted by $\coh\cc C$. A locally finitely
presented category $\cc C$ is {\em locally coherent\/} if $\coh\cc
C=\fp\cc C$. Obviously, a locally finitely presented category $\cc
C$ is locally coherent if and only if $\coh\cc C$ is an abelian
category.

In~\cite{E} it is shown that the category of quasi-coherent sheaves
$\Qcoh(X)$ over a scheme $X$ is a locally $\lambda$-presentable
category, for $\lambda$ a certain regular cardinal. However for some
nice schemes which are in practise the most used for algebraic
geometers like quasi-compact and quasi-separated there are enough
finitely presented generators for $\Qcoh(X)$.

\begin{prop}\label{qcoh}%\marginpar{qcoh}
Let $X$ be a quasi-compact and quasi-separated scheme. Then
$\Qcoh(X)$ is a locally finitely presented Grothendieck category. An
object $\cc F\in\fp(\Qcoh(X))$ \ifff it is locally finitely
presented.
\end{prop}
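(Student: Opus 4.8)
The plan is to prove Proposition~\ref{qcoh} by reducing to the affine case via a finite affine cover and using standard facts about extending quasi-coherent sheaves. Let me sketch how I would proceed.

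First I would establish that $\Qcoh(X)$ has a generating set of finitely presented objects. Since $X$ is quasi-compact, it admits a finite affine open cover $X = \bigcup_{i=1}^n U_i$ with each $U_i = \spec A_i$; since $X$ is quasi-separated, each inclusion $j_i : U_i \to X$ is a quasi-compact map, so by Proposition~\ref{loc} the functor $j_{i*}$ sends quasi-coherent sheaves on $U_i$ to quasi-coherent sheaves on $X$. On the affine piece $U_i$, the category $\Qcoh(U_i) \cong \Mod A_i$ is generated by the finitely presented $A_i$-modules, i.e. by the finitely presented quasi-coherent sheaves $\wt{M}$ with $M$ a finitely presented $A_i$-module. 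The key classical input (from \cite[I.6.9]{GD} or \cite[Ex.~II.5.15]{Har}) is that any quasi-coherent sheaf of finite presentation on $U_i$ extends to a quasi-coherent sheaf of finite presentation on $X$; more precisely, for a quasi-coherent sheaf $\cc F$ on $X$, the restriction $\cc F|_{U_i}$ is a filtered colimit of finitely presented quasi-coherent subsheaves (sheaves of finite type with finitely generated relations), and each such can be extended over $X$ — this uses quasi-compactness and quasi-separatedness to control the gluing. Assembling these extensions over the finitely many $U_i$ produces, for any $\cc F \in \Qcoh(X)$, a directed system of finitely presented subsheaves with colimit $\cc F$; hence $\Qcoh(X)$ is locally finitely presented.

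Next I would prove the characterization of $\fp(\Qcoh(X))$. One direction is immediate: if $\cc F$ is finitely presented in $\Qcoh(X)$, then restricting along each $j_i^* = (-)|_{U_i}$ — which is exact and preserves finitely generated and finitely presented objects, being a left adjoint in a localization pair by Proposition~\ref{loc} — shows $\cc F|_{U_i}$ is a finitely presented $A_i$-module, so $\cc F$ is locally finitely presented. For the converse, suppose $\cc F$ is locally finitely presented, i.e. each $\cc F|_{U_i}$ is a finitely presented $\cc O_X|_{U_i}$-module. Write $\cc F = \lp \cc F_\alpha$ as a directed colimit of finitely presented objects of $\Qcoh(X)$ (possible by the first part). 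The identity $\cc F \to \cc F$ must, after restriction to each of the finitely many $U_i$, factor through some $\cc F_{\alpha_i}|_{U_i}$ because $\cc F|_{U_i}$ is a finitely presented — hence finitely generated — object of $\Qcoh(U_i)$; taking a common $\alpha$ above all the $\alpha_i$ and using that there are only finitely many patches, the identity of $\cc F$ factors through $\cc F_\alpha$, so $\cc F$ is a direct summand of a finitely presented object, hence itself finitely presented. (One must check the factorizations on overlaps glue, which follows by making $\alpha$ large enough since the $\cc F_\alpha|_{U_i \cap U_j}$ are also a colimit of finitely presented modules and the relevant coherence arrows eventually agree.)

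The main obstacle I expect is the extension step: showing that a finitely presented quasi-coherent sheaf defined on an affine open $U_i$ extends to a finitely presented quasi-coherent sheaf on all of $X$, and more generally that every quasi-coherent sheaf on $X$ is the directed union of its finitely presented quasi-coherent subsheaves. This is where quasi-separatedness is essential — it guarantees that intersections $U_i \cap U_j$ are quasi-compact, so that a finitely generated subsheaf on $U_i$ restricted to the overlap is "not too big" and the coherent data needed to glide extensions together is finitely controlled. I would handle this by an induction on the number of patches in the cover, at each stage using the quasi-compactness of $U_i \cap (U_1 \cup \cdots \cup U_{i-1})$ and the affine case result that every finitely generated submodule of a module over $A_i$ arises by base change from a finitely generated piece, patched via the quasi-coherent extension lemma \cite[I.6.9.7]{GD}. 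The rest of the argument is formal colimit manipulation using the exactness and adjunction properties recorded in Proposition~\ref{loc}.
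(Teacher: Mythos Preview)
Your proposal is essentially correct and amounts to unpacking the two citations that constitute the paper's entire proof: the paper simply invokes \cite[I.6.9.12]{GD} for the statement that every quasi-coherent sheaf on a quasi-compact quasi-separated scheme is a filtered colimit of locally finitely presented sheaves, and \cite[Prop.~75]{M} for the equivalence between ``locally finitely presented'' and ``categorically finitely presented in $\Qcoh(X)$''. Your extension-and-patching sketch for the first part and your retract argument for the second part are exactly the standard arguments behind those references, so there is no real difference in strategy --- you are just writing out what the paper cites.

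One point needs correction. You claim $j_i^*$ preserves finitely presented objects ``being a left adjoint in a localization pair by Proposition~\ref{loc}'', but this does not follow from Proposition~\ref{loc} alone: preservation of finitely presented objects by the localization functor requires that the right adjoint $j_{i*}$ preserve filtered colimits, i.e.\ that $\cc S_{U_i}$ be of strictly finite type. The paper in fact \emph{derives} that $\cc S_U$ is of strictly finite type (at the start of Section~4) from the present proposition, not conversely, so invoking it here would be circular. The repair is already implicit in your own argument: once the first part is established, any categorically finitely presented $\cc F$ can be written as $\lp\cc F_\alpha$ with each $\cc F_\alpha$ locally finitely presented, and the identity factors through some $\cc F_\alpha$, exhibiting $\cc F$ as a retract of a locally finitely presented sheaf --- hence itself locally finitely presented. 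With that adjustment the sketch is sound.
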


\begin{proof}
By~\cite[I.6.9.12]{GD} every quasi-coherent sheaf is a direct limit
of locally finitely presented sheaves. It follows from~\cite[Prop.
75]{M} that the locally finitely presented sheaves are precisely the
finitely presented objects in $\Qcoh(X)$.
\end{proof}

Recall that a localizing subcategory $\cc S$ of a Grothendieck
category $\cc C$ is {\it of finite type\/} (respectively {\it of
strictly finite type}) if the functor $i:\cc C/\cc S\to\cc C$
preserves directed sums (respectively direct limits). If $\cc C$
is a locally finitely generated (respectively, locally finitely
presented) Grothen\-dieck category and $\cc S$ is of finite type
(respectively, of strictly finite type), then $\cc C/\cc S$ is a
locally finitely generated (respectively, locally finitely
presented) Grothendieck category and
   $$\fg(\cc C/\cc S)=\{C_{\cc S}\mid C\in\fg\cc C\}\quad\textrm{(respectively
     $\fp(\cc C/\cc S)=\{C_{\cc S}\mid C\in\fp\cc C\}$)}.$$
If $\cc C$ is a locally coherent Grothendieck category then $\cc
S$ is of finite type \ifff it is of strictly finite type (see,
e.g.,~\cite[5.14]{G}). In this case $\cc C/\cc S$ is locally
coherent.

The following proposition says that localizing subcategories of
finite type in a locally finitely presented Grothendieck category
$\cc C$ are completely determined by finitely presented torsion
objects (cf.~\cite{H,Kr1}).

\begin{prop}\label{zu}%\marginpar{zu}
Let $\cc S$ be a localizing subcategory of finite type in a locally
finitely presented Grothendieck category $\cc C$. Then the following
relation is true:
   $$\cc S=\surd(\fp\cc C\cap\cc S).$$
\end{prop}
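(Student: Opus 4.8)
The plan is to show the nontrivial inclusion $\cc S\subseteq\surd(\fp\cc C\cap\cc S)$, since the reverse inclusion is immediate from $\fp\cc C\cap\cc S\subseteq\cc S$ and the fact that $\cc S$ is localizing. Set $\cc T:=\surd(\fp\cc C\cap\cc S)$; this is a localizing subcategory contained in $\cc S$, so there is a quotient functor $\cc C\to\cc C/\cc T$ and, by the universal property of localization, an induced localizing subcategory $\cc S/\cc T$ of $\cc C/\cc T$ with $(\cc C/\cc T)/(\cc S/\cc T)\cong\cc C/\cc S$. It suffices to prove that $\cc S/\cc T=0$, i.e.\ that every $\cc T$-closed object lying in $\cc S$ is zero; equivalently, that $\cc T=\cc S$.

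First I would reduce to finitely presented objects. Since $\cc S$ is of finite type, $\cc C/\cc S$ is locally finitely presented and $\fp(\cc C/\cc S)=\{C_{\cc S}\mid C\in\fp\cc C\}$, as recalled just before the statement. Now take an arbitrary $X\in\cc S$. Write $X=\varinjlim X_i$ as a direct limit of finitely presented objects $X_i\in\fp\cc C$. The key point is to understand the $\cc S$-torsion part of each $X_i$. Since localization is exact and commutes with direct limits (finite type), $X_{\cc S}=\varinjlim (X_i)_{\cc S}=0$, so the maps $(X_i)_{\cc S}\to X_{\cc S}=0$ tell us the system $\{(X_i)_{\cc S}\}$ is a direct system with zero limit; but more usefully, for each $i$ the composite $X_i\to X$ becomes zero after applying $(-)_{\cc S}$, which does not immediately give that $X_i\in\cc S$. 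So the right move is instead: it is enough to prove the statement for finitely presented $X$, because once we know $\fp\cc C\cap\cc S\subseteq\cc T$, an arbitrary $X\in\cc S$ is a direct limit of its finitely generated subobjects, each of which is a quotient of a finitely presented object, and one shows those finitely presented objects can be chosen in $\cc S$ — this is where the filtration description of $\surd$ from Proposition~\ref{surd} does the bookkeeping.

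Concretely, for $X\in\fp\cc C\cap\cc S$ the claim $X\in\cc T$ is a tautology, so the real content is: an arbitrary $X\in\cc S$ is built from objects of $\fp\cc C\cap\cc S$ by the transfinite filtration process of Proposition~\ref{surd}. Given $X\in\cc S$, write $X=\sum_\alpha A_\alpha$ with $A_\alpha\in\fg\cc C$, $A_\alpha\subseteq X$; each $A_\alpha\in\cc S$ since $\cc S$ is closed under subobjects, and $A_\alpha$ admits an epimorphism $B_\alpha\twoheadrightarrow A_\alpha$ with $B_\alpha\in\fp\cc C$. The obstacle is that $B_\alpha$ need not lie in $\cc S$. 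To fix this, I would use that $\cc S$ being of finite type means (equivalently) that the torsion radical $t_{\cc S}$ commutes with directed unions, hence every finitely generated $\cc S$-torsion object $A$ — here $A_\alpha$ — is a directed union of its finitely generated subobjects, and one pushes the problem down until the finitely presented cover itself is torsion; more precisely, since $A_\alpha$ is finitely generated and torsion, and $\cc C/\cc S$ is locally finitely presented with $\fp(\cc C/\cc S)=\{C_{\cc S}\}$, the object $(B_\alpha)_{\cc S}$ is finitely presented in $\cc C/\cc S$ and the composite $(B_\alpha)_{\cc S}\twoheadrightarrow (A_\alpha)_{\cc S}=0$ factors, by finite presentation in a directed setting, through a finitely generated subobject of $B_\alpha$ containing $\ker(B_\alpha\to A_\alpha)$; iterating and passing to the resulting finitely presented torsion quotient exhibits $A_\alpha$ as a quotient of an object of $\fp\cc C\cap\cc S$, hence $A_\alpha\in\langle\fp\cc C\cap\cc S\rangle\subseteq\cc T$. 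Finally, $X=\sum_\alpha A_\alpha$ is a directed union of objects of $\cc T$, so $X\in\cc T$ because $\cc T$ is localizing. This completes $\cc S\subseteq\cc T$ and hence $\cc S=\surd(\fp\cc C\cap\cc S)$. I expect the step where one argues that a finitely generated torsion object is a quotient of a \emph{finitely presented} torsion object — using the finite-type hypothesis to make the factorization argument go through — to be the main obstacle, and the place where the hypothesis is genuinely used.
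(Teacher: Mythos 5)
Your proposal follows the same route as the paper: take a finitely generated subobject $A$ of $X\in\cc S$, cover it by a finitely presented object $B\twoheadrightarrow A$, use the finite-type hypothesis to replace $B$ by a finitely presented \emph{torsion} quotient $B/W$ still surjecting onto $A$, and then assemble $X$ from these pieces (the paper exhibits $X$ as an epimorphic image of $\bigoplus B/W$; you present it as the directed union $\sum A$; both land in $\surd(\fp\cc C\cap\cc S)$). The paper packages the crucial middle step --- existence of a finitely generated $W\subseteq\kr(B\to A)$ with $B/W\in\cc S$ --- as a direct citation of \cite[5.8]{G}.

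The weak spot in your write-up is the justification you offer for that middle step. You claim the zero composite $B_{\cc S}\twoheadrightarrow A_{\cc S}=0$ ``factors, by finite presentation in a directed setting, through a finitely generated subobject of $B$ \emph{containing} $\kr(B\to A)$''. That is both the wrong containment and, in general, impossible: no finitely generated subobject of $B$ can contain $\kr(B\to A)$ when that kernel is not itself finitely generated, and what one in fact needs is a finitely generated $W$ \emph{inside} $\kr(B\to A)$ with $B/W\in\cc S$. The correct argument is as follows: write $\kr(B\to A)=\bigcup_i K_i$ as a directed union of finitely generated subobjects, so $A=\lp_i B/K_i$; since $\cc S$ is of finite type, $(-)_{\cc S}$ commutes with this colimit, hence $\lp_i(B/K_i)_{\cc S}=A_{\cc S}=0$; each $(B/K_i)_{\cc S}$ is a quotient of $B_{\cc S}$, which is finitely generated because $\fg(\cc C/\cc S)=\{C_{\cc S}\mid C\in\fg\cc C\}$, so the directed union of the kernels of $B_{\cc S}\twoheadrightarrow(B/K_i)_{\cc S}$ exhausts the finitely generated object $B_{\cc S}$ and must stabilize, giving some $(B/K_i)_{\cc S}=0$, i.e.\ $B/K_i\in\cc S$. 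Since $B$ is finitely presented and $K_i$ finitely generated, $B/K_i\in\fp\cc C\cap\cc S$, and $A$ is its quotient. With this step repaired, your argument coincides with the paper's.
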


\begin{proof}
Obviously, $\surd(\fp\cc C\cap\cc S)\subset\cc S$. Let $X\in\cc S$
and let $Y$ be a finitely generated subobject of $X$. There is an
epimorphism $\eta:Z\to Y$ with $Z\in\fp\cc C$. By~\cite[5.8]{G}
there is a finitely generated subobject $W\subset\kr\eta$ such that
$Z/W\in\cc S$. It follows that $Z/W\in\fp\cc C\cap\cc S$ and $Y$ is
an epimorphic image of $Z/W$. Since $X$ is a direct union of
finitely generated torsion subobjects, we see that $X$ is an
epimorphic image of some $\ps_{i\in I}S_i$ with each $S_i\in\fp\cc
C\cap\cc S$. Therefore $\cc S\subset\surd(\fp\cc C\cap\cc S)$.
\end{proof}

\begin{lem}\label{gg}%\marginpar{gg}
Let $\cc Q$ and $\cc S$ be two localizing subcategories in a
Grothendieck category $\cc C$. If $X\in\cc C$ is both $\cc
Q$-closed and $\cc S$-closed, then it is $\cc T=\surd(\cc Q\cup\cc
S)$-closed.
\end{lem}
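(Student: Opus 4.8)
The plan is to use the characterization from Corollary~\ref{vrp}, so I first want to reduce everything to $\Hom$ and $\Ext^1$ vanishing against a convenient generating family. By Lemma~\ref{lenya} we know $\cc T=\surd(\cc Q\cup\cc S)$, so by Proposition~\ref{surd} every object $T\in\cc T$ admits a transfinite filtration whose successive quotients lie in $\langle(\cc Q\cup\cc S)^{\ps}\rangle$. The key observation is that $\langle(\cc Q\cup\cc S)^{\ps}\rangle$ consists of subquotients of direct sums $(\ps_i Q_i)\ps(\ps_j S_j)$ with $Q_i\in\cc Q$, $S_j\in\cc S$; each such object has a subobject in $\surd\cc Q$ (the image of the $\cc Q$-part) with quotient in $\surd\cc S$, hence every successive quotient in the filtration of $T$ is an extension of an object of $\cc S$ by an object of $\cc Q$ (or vice versa). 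Splicing these filtrations together, I obtain for $T$ itself a transfinite filtration $0=T_0\subset T_1\subset\cdots$ with $T=\bigcup T_\beta$, $T_\gamma=\bigcup_{\beta<\gamma}T_\beta$ at limits, and each $T_{\beta+1}/T_\beta$ lying in $\cc Q$ or in $\cc S$.

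Next I would run the argument of Corollary~\ref{vrp} almost verbatim, but with a twist: instead of a single subcategory $\cc X$, the successive quotients now alternate between $\cc Q$ and $\cc S$. Since $X$ is both $\cc Q$-closed and $\cc S$-closed, Corollary~\ref{vrp} gives $\Hom(Q,X)=\Ext^1(Q,X)=0$ for all $Q\in\cc Q$ \emph{and} $\Hom(S,X)=\Ext^1(S,X)=0$ for all $S\in\cc S$. Therefore, for the induction on $\beta$, at each successor step I use the long exact sequence
   $$\Hom(T_{\beta+1}/T_\beta,X)\to\Hom(T_{\beta+1},X)\to\Hom(T_\beta,X)\to\Ext^1(T_{\beta+1}/T_\beta,X)\to\Ext^1(T_{\beta+1},X)\to\Ext^1(T_\beta,X),$$
and the two outer terms vanish regardless of whether $T_{\beta+1}/T_\beta$ is in $\cc Q$ or in $\cc S$. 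The limit-ordinal step for $\Hom$ is the $\varprojlim$ computation; for $\Ext^1$ it is the same splitting-of-short-exact-sequences argument as in Corollary~\ref{vrp}, using a directed system of partial splittings $\kappa_\beta$ and the vanishing $\Hom(T_\beta,X)=0$ to see the splittings are compatible. This shows $\Hom(T,X)=\Ext^1(T,X)=0$ for every $T\in\cc T$, and in particular for $T$ ranging over a generating set of $\cc T$, which by Corollary~\ref{vrp} (applied to $\cc T$, which is closed under subobjects, quotients, and direct sums as a localizing subcategory) means $X$ is $\cc T$-closed.

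The main obstacle, such as it is, is purely bookkeeping: merging the two filtrations coming from the $\cc Q$-part and the $\cc S$-part of each term $\langle(\cc Q\cup\cc S)^{\ps}\rangle$ into one well-ordered filtration of $T$ with the stated continuity at limit ordinals, and checking the successive quotients really do land in $\cc Q\cup\cc S$ rather than merely in $\surd(\cc Q\cup\cc S)$. An alternative, slicker route that avoids re-deriving Corollary~\ref{vrp} is to invoke Lemma~\ref{lenya} directly: $X$ being $\cc S$-closed means $X\in\cc C/\cc S$, and then $X$ being $\cc Q$-closed in $\cc C$ translates (via $\Hom_{\cc C}(Q,X)\cong\Hom_{\cc C/\cc S}(Q_{\cc S},X)$ and the analogous $\Ext^1$ statement) into $X$ being $\surd\wh{\cc Q}$-closed in $\cc C/\cc S$; since $\cc C/\cc T\cong(\cc C/\cc S)/\surd\wh{\cc Q}$ by Lemma~\ref{lenya}, $\cc T$-closedness of $X$ in $\cc C$ is exactly $\surd\wh{\cc Q}$-closedness in $\cc C/\cc S$, and we are done. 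I would present the second route as the main proof and relegate the filtration argument to a remark, since the second route leverages the machinery already built.
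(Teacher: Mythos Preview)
Your second route is precisely the paper's approach: via Lemma~\ref{lenya}, $\cc T$-closedness in $\cc C$ is $\surd\wh{\cc Q}$-closedness in $\cc C/\cc S$, and one checks the latter using Corollary~\ref{vrp} applied to $\wh{\cc Q}$. The phrase ``the analogous $\Ext^1$ statement'' is, however, exactly where the work lies and cannot be waved through as an adjunction: the natural comparison map runs $\Ext^1_{\cc C}(Q,X)\to\Ext^1_{\cc C/\cc S}(Q_{\cc S},X)$ and is in general only an injection, so vanishing on the left does not by itself give vanishing on the right. The paper argues directly: given an extension $E\colon X\rightarrowtail Y\twoheadrightarrow C_{\cc S}$ in $\cc C/\cc S$ with $C\in\cc Q$, form the pullback in $\cc C$ along $\lambda_C\colon C\to C_{\cc S}$ to obtain $X\rightarrowtail Y'\to C$; with $C'=\im_{\cc C}(Y'\to C)\in\cc Q$ one has $C'_{\cc S}=C_{\cc S}$, and the short exact sequence $X\rightarrowtail Y'\twoheadrightarrow C'$ splits because $\Ext^1_{\cc C}(C',X)=0$ ($X$ is $\cc Q$-closed), whence $E$, being its $\cc S$-localization, splits too. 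You should include this step explicitly.

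Your first route also works and is not in the paper. It can be streamlined so that you need not replay the transfinite argument of Corollary~\ref{vrp}: take $\cc X$ to be the class of all extensions of an object of $\cc S$ by an object of $\cc Q$. This $\cc X$ is closed under subobjects, quotients, and direct sums, satisfies $\surd\cc X=\cc T$, and the long exact sequence gives $\Hom(E,X)=\Ext^1(E,X)=0$ for every $E\in\cc X$ directly from the separate vanishing over $\cc Q$ and over $\cc S$; now Corollary~\ref{vrp} applies as a black box.
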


\begin{proof}
By Lemma~\ref{lenya} $\cc C/\cc T\cong(\cc C/\cc S)/\surd\wh{\cc
Q}$, where $\wh{\cc Q}=\{C_{\cc S}\in\cc C/\cc S\mid C\in\cc Q\}$
and $\wh{\cc Q}$ is closed under direct sums, subobjects, quotient
objects in $\cc C/\cc S$. To show that $X=X_{\cc S}$ is a $\cc
T$-closed object it is enough to check that $X$ is $\surd\wh{\cc
Q}$-closed in $\cc C/\cc S$. Obviously $\Hom_{\cc C/\cc S}(A,X)=0$
for all $A\in\wh{\cc Q}$.

Consider a short exact sequence in $\cc C/\cc S$
   $$E:\quad X\rightarrowtail Y\bl p\twoheadrightarrow C_{\cc S}$$
with $C\in{\cc Q}$. One can construct a commutative diagram in
$\cc C$
   $$\xymatrix{
    E':& X\ar@{=}[d]\ar@{ >->}[r]&Y'\ar[d]\ar[r]^{p'}&C\ar[d]^{\lambda_C}\\
    E:& X\ar@{ >->}[r]&Y\ar[r]^(.45)p&C_{\cc S}
     }$$
with the right square cartesian. Let $C'=\im_{\cc C}p'$ then
$C'\in\cc Q$, $C'_{\cc S}=C_{\cc S}$ and the short exact sequence
   $$E'':\quad X\rightarrowtail Y'\bl{p'}\twoheadrightarrow C'$$
splits, because $\Ext^1_{\cc C}(C',X)=0$. It follows that $E$
splits for $E=E'_{\cc S}=E''_{\cc S}$. Therefore $X$ is
$\surd\wh{\cc Q}$-closed by Corollary~\ref{vrp}.
\end{proof}

Below we shall need the following

\begin{lem}\label{ggg}%\marginpar{ggg}
Given a family of localizing subcategories of finite type $\{\cc
S_i\}_{i\in I}$ in a locally finitely presented Grothendieck
category $\cc C$, their join $\cc T=\surd(\cup_{i\in I}\cc S_i)$
is a localizing subcategory of finite type.
\end{lem}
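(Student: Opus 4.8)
The plan is to reduce, via Proposition~\ref{zu}, to the case where the join is generated by finitely presented objects. By that proposition each $\cc S_i=\surd(\fp\cc C\cap\cc S_i)$, whence
$$\cc T=\surd\Big(\bigcup_{i\in I}\cc S_i\Big)=\surd\Big(\bigcup_{i\in I}(\fp\cc C\cap\cc S_i)\Big)=\surd\cc B,\qquad\cc B:=\bigcup_{i\in I}(\fp\cc C\cap\cc S_i),$$
and $\cc B$ is a class of finitely presented objects. So everything comes down to the implication: if $\cc B\subseteq\fp\cc C$, then $\surd\cc B$ is of finite type.

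To establish this I would check directly that $i\colon\cc C/\surd\cc B\to\cc C$ preserves directed sums. Unwinding the definition (using that $(-)_{\surd\cc B}$ is exact and preserves colimits), this amounts to: for every directed family $\{N_j\}$ of $\surd\cc B$-closed subobjects of an object of $\cc C$, the union $N:=\lp N_j=\bigcup_jN_j$ is again $\surd\cc B$-closed. Writing $\cc B'=\langle\cc B^{\ps}\rangle$ for the closure of $\cc B$ under subobjects, quotient objects and direct sums, we have $\surd\cc B=\surd\cc B'$, so by Corollary~\ref{vrp} it is enough to show $\Hom(X,N)=\Ext^1(X,N)=0$ for all $X\in\cc B'$.

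The $\Hom$-part is routine: for $B\in\cc B$ finitely presented, $\Hom(B,-)$ commutes with the directed colimit, so $\Hom(B,N)=\lp\Hom(B,N_j)=0$; hence $N$ is $\surd\cc B$-torsionfree, and then $\Hom(X,N)=0$ for every $X\in\surd\cc B\supseteq\cc B'$ (a nonzero morphism from a $\surd\cc B$-torsion object to $N$ would have as image a nonzero $\surd\cc B$-torsion subobject of $N$); this part does not use $\cc B\subseteq\fp\cc C$, only that $\cc C$ is locally finitely generated. For the $\Ext^1$-part I would exploit, for each $B\in\cc B$, a presentation $0\to K\to P\to B\to0$ with $P\in\fp\cc C$ and $K$ finitely generated: since each $N_j$ is $\surd\cc B$-closed, $\Ext^1(B,N_j)=0$, so $\Hom(P,N_j)\to\Hom(K,N_j)$ is surjective; passing to the exact directed colimit over $j$, and using that $\Hom(P,-)$ commutes with directed colimits while $\Hom(K,-)$ commutes with directed unions ($K$ being finitely generated), the surjectivity of $\Hom(P,N)\to\Hom(K,N)$ persists. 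From this one has to deduce $\Ext^1(X,N)=0$ for all $X\in\cc B'$, carrying the information through the filtration of Proposition~\ref{surd} and the six-term exact sequence exactly as in the proof of Corollary~\ref{vrp}. Granting all this, Lemma~\ref{ggg} follows.

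The $\Ext^1$-step --- equivalently, showing that a directed union of $\surd\cc B$-closed objects is not merely $\surd\cc B$-torsionfree but genuinely $\surd\cc B$-closed (i.e.\ also $\surd\cc B$-injective) --- is where I expect the real difficulty to lie. The delicacy is that $\cc C$ is only assumed locally finitely presented: a finitely generated subobject of a finitely presented object need not be finitely presented, so $K$ above is finitely generated but in general not finitely presented, and one must argue with ``$\Hom(K,-)$ commutes with directed unions'' rather than with the stronger ``commutes with all directed colimits''; similarly $\cc B'$ leaves $\fp\cc C$, so the passage from $\cc B$ through $\cc B'$ to $\surd\cc B$ cannot be carried out object by object but must go through Proposition~\ref{surd} and Corollary~\ref{vrp}. (The conceptually cleanest packaging of the whole statement is that the finite-type localizing subcategories are precisely the open subsets of the finer topology on $\inj\cc C$ with basic opens $(B)$, $B\in\fp\cc C$; then $(\cc T)=\bigcup_{i\in I}(\cc S_i)$ is visibly open --- but this presupposes the classification of finite localizations established later in the paper.)
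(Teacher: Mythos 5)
Your reduction via Proposition~\ref{zu} to a class $\cc B\subseteq\fp\cc C$ of finitely presented objects is correct and a sensible simplification, and your $\Hom$-step is fine: it shows that a directed colimit $N=\lp N_j$ of $\surd\cc B$-closed objects is $\surd\cc B$-torsionfree. In fact that is precisely the condition the paper checks as well.

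The $\Ext^1$-step, however, has a genuine gap, and it is where your argument breaks. From the long exact sequence of $\Hom(-,N)$ applied to $0\to K\to P\to B\to 0$, the surjectivity of $\Hom(P,N)\to\Hom(K,N)$ gives only an injection $\Ext^1(B,N)\hookrightarrow\Ext^1(P,N)$; to conclude $\Ext^1(B,N)=0$ you would need $\Ext^1(P,N)=0$, and there is no reason for that to hold. Worse, since $B\in\fp\cc C$ one may simply take $P=B$ and $K=0$, which makes the whole presentation argument vacuous. There is also a secondary issue you gloss over: Corollary~\ref{vrp} requires $\Hom$- and $\Ext^1$-vanishing for all $X$ in $\cc B'=\langle\cc B^{\ps}\rangle$, not merely for $X\in\cc B$, and objects of $\cc B'$ are subquotients of possibly infinite direct sums and hence typically not finitely presented; so even the $\Hom(P,-)$-commutes-with-colimits step is unavailable there, and ``carrying the information through the filtration as in Corollary~\ref{vrp}'' does not propagate $\Ext^1$-vanishing from $\cc B$ to $\cc B'$.

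The paper avoids this entirely, and this is the missing ingredient you need. It invokes \cite[5.8]{G}, which gives a purely torsion-theoretic criterion: a localizing subcategory $\cc T$ of a locally finitely presented Grothendieck category is of finite type as soon as every direct limit $\lp C_\delta$ of $\cc T$-closed objects has no $\cc T$-torsion. With that criterion in hand, your $\Hom$-computation is already the whole proof; no $\Ext^1$-vanishing needs to be established. The paper also uses the finite case of the lemma (join of two finite-type subcategories is of finite type, which in turn rests on Lemma~\ref{gg}) in order to reduce, without loss of generality, to a directed family $\cc S_i\subseteq\cc S_j$; your reduction via Proposition~\ref{zu} sidesteps that step but does not remove the need for the \cite[5.8]{G} criterion. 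So: keep your $\Hom$-argument, drop the $\Ext^1$-argument, and cite (or prove) the finite-type criterion from \cite[5.8]{G}.
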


\begin{proof}
Let us first consider the case when $I$ is finite. By induction it
is enough to show that the join $\cc T=\surd(\cc Q\cup\cc S)$ of
two localizing subcategories of finite type $\cc Q$ and $\cc S$ is
of finite type. We have to check that the inclusion functor $\cc
C/\cc T\to\cc C$ respects directed sums. It is plainly enough to
verify that $X=\sum_{\cc C}X_\alpha$ is a $\cc T$-closed object
whenever each $X_\alpha$ is $\cc T$-closed. Since $\cc Q$ and $\cc
S$ are of finite type, $X$ is both $\cc Q$-closed and $\cc
S$-closed. It follows from Lemma~\ref{gg} that $X$ is $\cc
T$-closed. Therefore $\cc T$ is of finite type.

Now let $\{\cc S_i\}_{i\in I}$ be an arbitrary set of localizing
subcategories of finite type. Without loss of generality we may
assume that $I$ is a directed set and $\cc S_i\subset\cc S_j$ for
$i\le j$. Indeed, given a finite subset $J\subset I$ we denote by
$\cc S_J$ the localizing subcategory of finite type
$\surd(\cup_{j\in J}\cc S_j)$. Then the set $R$ of all finite
subsets $J$ of $I$ is plainly directed, $\cc S_J\subset\cc S_{J'}$
for any $J\subset J'$, and $\cc T=\surd(\cup_{J\in R}\cc S_J)$.

Let $\cc X$ denote the full subcategory of $\cc C$ of those objects
which can be presented as directed sums $\sum X_\alpha$ with each
$X_\alpha$ belonging to $\cup_{i\in I}\cc S_i$. Since $I$ is a
directed set and $\cc S_i\subset\cc S_j$ for $i\le j$, it follows
that a direct sum $X=\ps_{\gamma\in\Gamma}X_\gamma$ with each
$X_\gamma$ belonging to $\cup_{i\in I}\cc S_i$. is in $\cc X$.
Indeed, $X=\sum X_S$ with $S$ running through all finite subsets of
$\Gamma$ and $X_S=\ps_{\gamma\in S}X_\gamma\in\cup_{i\in I}\cc S_i$.
Therefore if $\{X_\beta\}_{\beta\in B}$ is a family of subobjects of
an object $X$ and each $X_\beta$ belongs to $\cup_{i\in I}\cc S_i$,
then the direct union $\sum X_\beta$ belongs to $\cc X$.

The subcategory $\cc X$ is closed under subobjects and quotient
objects. Indeed, let $X=\sum X_\alpha$ with each $X_\alpha$
belonging to $\cup_{i\in I}\cc S_i$. Consider a short exact
sequence
   $$Y\rightarrowtail X\twoheadrightarrow Z.$$
We set $Y_\alpha=Y\cap X_\alpha$ and
$Z_\alpha=X_\alpha/Y_\alpha\subset Z$. Then both $Y_\alpha$ and
$Z_\alpha$ are in $\cup_{i\in I}\cc S_i$, $Y=Y\cap(\sum
X_\alpha)=\sum Y\cap X_\alpha=\sum Y_\alpha$ and $Z=\sum
Z_\alpha$. So $Y,Z\in\cc X$.

Clearly, $\cc X$ is closed under directed sums, in particular
direct sums, hence $\cc X=\langle\cc X^\ps\rangle$ and $\cc
T=\surd\cc X$. If we show that every direct limit $C=\lp C_\delta$
of $\cc T$-closed objects $C_\delta$ has no $\cc T$-torsion, it
will follow from~\cite[5.8]{G} that $\cc T$ is of finite type.

Using Proposition~\ref{surd}, it is enough to check that
$\Hom_{\cc C}(X,C)=0$ for any object $X\in\cc X$. Let $Y$ be a
finitely generated subobject in $X$. There is an index $i_0\in I$
such that $Y\in\cc S_{i_0}$ and an epimorphism
$\eta:Z\twoheadrightarrow Y$ with $Z\in\fp\cc C$. By~\cite[5.8]{G}
there exists a finitely generated subobject $W$ of $\kr\eta$ such
that $Z/W\in\cc S_{i_0}$. Since $Z/W\in\fp\cc C$ then
$\Hom(Z/W,C)=\lp\Hom(Z/W,C_\delta)=0$. We see that $\Hom(Y,C)=0$,
and hence $\Hom(X,C)=0$.
\end{proof}

Given a localizing subcategory of finite type $\cc S$ in $\cc C$, we
denote by
   $$O(\cc S)=\{E\in\inj\cc C\mid t_{\cc S}(E)\neq 0\}.$$

The next result has been obtained by Herzog~\cite{H} and
Krause~\cite{Kr1} for locally coherent Grothendieck categories and
by Garkusha-Prest~\cite{GP2} for the category of modules $\Rfp$ over
a commutative ring $R$.

\begin{thm}\label{zg}%\marginpar{zg}
Suppose $\cc C$ is a locally finitely presented Grothendieck
category. The collection of subsets of $\inj\cc C$,
   $$\{O(\cc S)\mid\cc S\subset\cc C \textrm{ is a localizing subcategory of finite type}\},$$
satisfies the axioms for the open sets of a topology on the
injective spectrum $\inj\cc C$. This topological space will be
denoted by $\inj_{fl}\cc C$ and this topology will be referred to as
the {\em fl-topology} (``fl" for finite localizations). Moreover,
the map
   \begin{equation}\label{111}
    \cc S\longmapsto O(\cc S)
   \end{equation}
is an inclusion-preserving bijection between the localizing
subcategories $\cc S$ of finite type in $\cc C$ and the open subsets
of $\inj_{fl}\cc C$.
\end{thm}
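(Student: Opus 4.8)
The plan is to mimic the proof of Proposition~\ref{tp}, but now being careful that all the localizing subcategories appearing are of finite type, so that the sets $O(\cc S)$ really are legitimate ``open sets''. The key structural input will be Lemma~\ref{ggg} (arbitrary joins of finite type localizing subcategories are of finite type) together with the characterization $O(\cc S)=(\cc S)$ that I expect to hold: since $\cc S$ is of finite type, $\cc S=\surd(\fp\cc C\cap\cc S)$ by Proposition~\ref{zu}, and for a finitely presented $X\in\cc S$ one has $X\subseteq t_{\cc S}(E_X)$ for an injective envelope, so $\Hom(X,E)\neq0$ forces $t_{\cc S}(E)\neq0$; conversely $t_{\cc S}(E)\neq0$ gives a nonzero (finitely generated, hence receiving an epi from a finitely presented torsion object) subobject, so $\Hom(X,E)\neq0$ for some $X\in\fp\cc C\cap\cc S$. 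Thus $O(\cc S)=\bigcup_{X\in\fp\cc C\cap\cc S}(X)=(\cc S)$ as subsets of $\inj\cc C$.

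First I would verify the topology axioms. Emptyset and the whole space: $O(0)=\emptyset$ and $O(\cc C)=\inj\cc C$. Finite intersections: given $\cc S_1,\cc S_2$ of finite type, the intersection $\cc S_1\cap\cc S_2$ is again localizing of finite type (one checks that a sum of $\cc S_1\cap\cc S_2$-closed objects is closed, using that it is both $\cc S_1$-closed and $\cc S_2$-closed because each $\cc S_i$ is of finite type — this is the same kind of argument as in Lemma~\ref{ggg}), and as in Proposition~\ref{tp}, $O(\cc S_1)\cap O(\cc S_2)=O(\cc S_1\cap\cc S_2)$ because each $E\in\inj\cc C$ is uniform, so $0\neq t_{\cc S_1}(E)\cap t_{\cc S_2}(E)$ lies in $\cc S_1\cap\cc S_2$. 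Arbitrary unions: given $\{\cc S_i\}_{i\in I}$ of finite type, set $\cc T=\surd(\bigcup_i\cc S_i)$, which is of finite type by Lemma~\ref{ggg}; then $\bigcup_i O(\cc S_i)=\bigcup_i(\cc S_i)=(\bigcup_i\cc S_i)=(\surd\bigcup_i\cc S_i)=(\cc T)=O(\cc T)$, using the already-noted identity $(\cc X)=(\surd\cc X)$ from the paragraph preceding Proposition~\ref{tp} and the identification $O(-)=(-)$.

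Finally I would prove the bijection statement~\eqref{111}. Injectivity: if $O(\cc S)=O(\cc S')$ with $\cc S,\cc S'$ of finite type, then $(\cc S)=(\cc S')$, and by the injectivity half of Proposition~\ref{tp} (the map $\cc S\mapsto(\cc S)$ is already bijective on \emph{all} localizing subcategories), $\cc S=\cc S'$. Surjectivity onto the open sets of $\inj_{fl}\cc C$ is immediate from the definition of that topology: every open set is by definition some $O(\cc S)$. Monotonicity is clear. The main obstacle, and the point needing the most care, is establishing $O(\cc S)=(\cc S)$ as subsets — i.e.\ that the ``$t_{\cc S}$-nonzero'' description and the ``$\Hom$-nonzero'' description of this subset agree — since all three topology axioms and the injectivity argument are routed through this identity; the finite-type hypothesis enters precisely here via Proposition~\ref{zu}, guaranteeing enough finitely presented torsion objects.
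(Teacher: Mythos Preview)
Your overall architecture mirrors the paper's: reduce everything to the identification $O(\cc S)=(\cc S)$ and then invoke Proposition~\ref{tp} and Lemma~\ref{ggg}. Two points deserve comment.

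First, you make heavy weather of $O(\cc S)=(\cc S)$, routing it through Proposition~\ref{zu} and calling it ``the main obstacle''. In fact this identity is immediate for \emph{any} localizing subcategory $\cc S$, with no finiteness hypothesis: if $t_{\cc S}(E)\neq 0$ then $t_{\cc S}(E)\in\cc S$ witnesses $\Hom(-,E)\neq 0$; conversely a nonzero map $X\to E$ with $X\in\cc S$ has image a nonzero $\cc S$-subobject of $E$, so $t_{\cc S}(E)\neq 0$. The paper simply asserts $O(\cc S)=(\cc S)$ in the first line. Your injective-envelope detour is unnecessary.

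Second, and more seriously, your argument that $\cc S_1\cap\cc S_2$ is of finite type does not work. You write that a directed sum of $(\cc S_1\cap\cc S_2)$-closed objects ``is both $\cc S_1$-closed and $\cc S_2$-closed because each $\cc S_i$ is of finite type''. But an $(\cc S_1\cap\cc S_2)$-closed object need not be $\cc S_i$-closed at all (take $\cc S_1\cap\cc S_2=0$: then every object is $(\cc S_1\cap\cc S_2)$-closed). The analogy with Lemma~\ref{ggg} is inverted: for a \emph{join} $\cc T=\surd(\cc S_1\cup\cc S_2)$, being $\cc T$-closed implies being $\cc S_i$-closed since $\cc S_i\subseteq\cc T$; for a \emph{meet} the inclusion goes the other way and the implication fails. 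The paper instead uses the criterion of~\cite[5.8]{G}: given $f\colon X\to S$ with $X$ finitely presented and $S\in\cc S_1\cap\cc S_2$, one finds finitely generated $X_i\subseteq\kr f$ with $X/X_i\in\cc S_i$, and then $X/(X_1+X_2)\in\cc S_1\cap\cc S_2$ with $X_1+X_2$ finitely generated. You should replace your intersection argument with this one.
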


\begin{proof}
First note that $O(\cc S)=(\cc S)$, $O(0)=\emptyset$ and $O(\cc
C)=\inj\cc C$. We have $O(\cc S_1)\cap O(\cc S_2)=(\cc S_1\cap\cc
S_2)$ by Proposition~\ref{tp}. We claim that $\cc S_1\cap\cc S_2$ is
of finite type, whence $O(\cc S_1)\cap O(\cc S_2)=O(\cc S_1\cap\cc
S_2)$. Indeed, let us consider a morphism $f:X\to S$ from a finitely
presented object $X$ to an object $S\in\cc S_1\cap\cc S_2$. It
follows from~\cite[5.8]{G} that there are finitely generated
subobjects $X_1,X_2\subseteq\kr f$ such that $X/X_i\in\cc S_i$,
$i=0,1$. Then $X_1+X_2$ is a finitely generated subobject of $\kr f$
and $X/(X_1+X_2)\in\cc S_1\cap\cc S_2$. By~\cite[5.8]{G} $\cc
S_1\cap\cc S_2$ is of finite type.

By Lemma~\ref{ggg} $\surd\cup_{i\in I}\cc S_i$ is of finite type
with each $\cc S_i$ of finite type. It follows from
Proposition~\ref{tp} that $\cup_{i\in I}O(\cc S_i)=O(\cup_{i\in
I}\cc S_i)=O(\surd\cup_{i\in I}\cc S_i)$.

It follows from Proposition~\ref{tp} that the map~\eqref{111} is
bijective.
\end{proof}

Let $\latl(\cc C)$ denote the lattice of localizing subcategories of
$\cc C$, where, by definition,
   $$\cc S\wedge\cc Q=\cc S\cap\cc Q,\quad\cc S\vee\cc Q=\surd(\cc S\cup\cc Q)$$
for any $\cc S,\cc Q\in\latl(\cc C)$. The proof of Theorem~\ref{zg}
shows that the subset of localizing subcategories of finite type in
$\latl(\cc C)$ is a sublattice. We shall denote it by $\latfl(\cc
C)$.

\begin{rem}{\rm
If $\cc C$ is a locally coherent Grothendieck category, the
topological space $\inj_{fl}\cc C$ is also called in literature the
{\it Ziegler spectrum\/} of $\cc C$. It arises from the Ziegler work
on the model theory of modules~\cite{Z}. According to the original
Ziegler definition the points of the Ziegler spectrum of a ring $R$
are the isomorphism classes of indecomposable pure-injective right
$R$-modules. These can be identified with $\inj(\lfp,\text {Ab})$,
where $(\lfp,\text {Ab})$ is the locally coherent Grothendieck
category consisting of additive covariant functors defined on the
category of finitely presented left modules $\lfp$ with values in
the category of abelian groups Ab. The closed subsets correspond to
complete theories of modules. Later Herzog~\cite{H} and
Krause~\cite{Kr1} defined the Ziegler topology for arbitrary locally
coherent Grothendieck categories.

}\end{rem}

\begin{prop}\label{lenn}%\marginpar{lenn}
Given a localizing subcategory of strictly finite type $\cc S$ in a
locally finitely presented Grothendieck category $\cc C$, the
topology on $\inj_{fl}(\cc C/\cc S)$ coincides with the subspace
topology induced by $\inj_{fl}\cc C$.
\end{prop}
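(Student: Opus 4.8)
The plan is to mimic the proof of Proposition~\ref{len} (the analogous statement for the Gabriel topology), but now keeping track of the finite-type condition throughout. We already know from Proposition~\ref{len} that, as a set, $\inj_{fl}(\cc C/\cc S)$ may be identified with the subset $\inj\cc C\setminus O(\cc S)=\inj_{gab}\cc C\setminus(\cc S)$, and that the Gabriel topology on the quotient is the subspace topology. So the only thing to check is that, under this identification, the open sets of $\inj_{fl}(\cc C/\cc S)$ — namely the sets $O(\cc T)$ for $\cc T$ a localizing subcategory of finite type in $\cc C/\cc S$ — are exactly the sets of the form $O(\cc Q)\cap\big(\inj\cc C\setminus O(\cc S)\big)$ for $\cc Q$ a localizing subcategory of finite type in $\cc C$.

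First I would treat the ``$\supseteq$'' direction. Let $\cc Q$ be localizing of finite type in $\cc C$. By Lemma~\ref{ggg} the join $\cc P=\surd(\cc Q\cup\cc S)$ is again of finite type, and by Lemma~\ref{lenya} we have $\cc C/\cc P\cong(\cc C/\cc S)/\surd\wh{\cc Q}$, where $\wh{\cc Q}$ is the image of $\cc Q$ under the localizing functor $(-)_{\cc S}$; moreover $\wh{\cc Q}$ is closed under subobjects, quotients and direct sums in $\cc C/\cc S$. The key point is that $\surd\wh{\cc Q}$ is a localizing subcategory \emph{of finite type} in the locally finitely presented category $\cc C/\cc S$: indeed, since $\cc S$ is of strictly finite type, $\fp(\cc C/\cc S)=\{C_{\cc S}\mid C\in\fp\cc C\}$, and one checks, using Proposition~\ref{zu} and the fact that $\cc Q=\surd(\fp\cc C\cap\cc Q)$, that $\surd\wh{\cc Q}$ is generated by its finitely presented torsion objects $(\fp\cc C\cap\cc Q)_{\cc S}$, which together with $\wh{\cc Q}$ being closed under the relevant operations shows finite type via~\cite[5.8]{G}. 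Then $O(\surd\wh{\cc Q})$ is open in $\inj_{fl}(\cc C/\cc S)$, and tracing through the identifications (exactly as at the end of the proof of Lemma~\ref{lenya}, where $(\cc T)=(\cc Q)\cup(\cc S)$) gives $O(\surd\wh{\cc Q})=O(\cc Q)\cap(\inj\cc C\setminus O(\cc S))$.

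Next the ``$\subseteq$'' direction. Given $\cc T'$ localizing of finite type in $\cc C/\cc S$, by~\cite[1.7]{G} there is a unique localizing subcategory $\cc T$ of $\cc C$ containing $\cc S$ with $\cc C/\cc T\cong(\cc C/\cc S)/\cc T'$; I must show $\cc T$ is of finite type in $\cc C$, since then $O(\cc T)$ is open in $\inj_{fl}\cc C$ and $O(\cc T)\cap(\inj\cc C\setminus O(\cc S))=O(\cc T')$ by the set-level identification. To see $\cc T$ is of finite type, take $X=\sum_\alpha X_\alpha$ with each $X_\alpha$ a $\cc T$-closed object of $\cc C$; I want $X$ to be $\cc T$-closed. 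Each $X_\alpha$ is in particular $\cc S$-closed, hence so is $X$ (because $\cc S$ is of finite type), so $X=X_{\cc S}$ lies in $\cc C/\cc S$ and each $X_\alpha$ is a $\cc T'$-closed object there; since $\cc T'$ is of finite type in $\cc C/\cc S$, the sum $X$ is $\cc T'$-closed in $\cc C/\cc S$, and therefore $\cc T$-closed in $\cc C$ via $\cc C/\cc T\hookrightarrow\cc C/\cc S\hookrightarrow\cc C$ and the description of $\cc T$-closed objects as those $\cc S$-closed objects that are $\cc T'$-closed. By~\cite[5.8]{G} (or directly) $\cc T$ is of finite type.

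The main obstacle is the verification in the ``$\supseteq$'' direction that $\surd\wh{\cc Q}$ is of finite type in $\cc C/\cc S$ — this is where the hypothesis that $\cc S$ is of \emph{strictly} finite type (so that $\cc C/\cc S$ is again locally finitely presented and $\fp$ is computed by localizing $\fp\cc C$) is essential, and where one must combine Proposition~\ref{zu}, Lemma~\ref{lenya}, and the criterion~\cite[5.8]{G}. Once that is in place, everything else is a matter of carefully transporting the open sets across the equivalences already set up in Lemma~\ref{lenya} and Proposition~\ref{len}.
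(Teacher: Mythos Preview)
Your proof is correct and follows the paper's structure closely; the ``$\subseteq$'' direction is essentially identical. For the ``$\supseteq$'' direction both arguments reduce to showing that $\surd\wh{\cc Q}$ is of finite type in $\cc C/\cc S$, but the verification of this sub-claim differs. You argue that $\surd\wh{\cc Q}$ is generated by the finitely presented objects $(\fp\cc C\cap\cc Q)_{\cc S}$ and then appeal (via~\cite[5.8]{G}) to the principle that a localizing subcategory generated by finitely presented objects is of finite type --- note this is the \emph{converse} of Proposition~\ref{zu}, so you are relying on~\cite[5.8]{G} for a direction not made explicit in the paper. The paper instead verifies the direct-limit torsionfree criterion from~\cite[5.8]{G} head-on: given $C=\varinjlim_{\cc C/\cc S}C_\delta$ with each $C_\delta$ $\surd\wh{\cc Q}$-closed, the strict-finite-type hypothesis on $\cc S$ gives $C\cong\varinjlim_{\cc C}C_\delta$, and since each $C_\delta$ is $\cc Q$-closed and $\cc Q$ is of finite type in $\cc C$, this limit is $\cc Q$-torsionfree, whence $\Hom_{\cc C/\cc S}(Y_{\cc S},C)\cong\Hom_{\cc C}(Y,C)=0$ for all $Y\in\cc Q$. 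The paper's route makes the role of strict finite type completely transparent (it is exactly what lets one compute the direct limit in $\cc C$ rather than in $\cc C/\cc S$); your route hides it inside the identification $\fp(\cc C/\cc S)=\{C_{\cc S}\mid C\in\fp\cc C\}$. Finally, your detour through Lemma~\ref{ggg} and the join $\cc P=\surd(\cc Q\cup\cc S)$ is unnecessary --- the paper works directly with $\surd\wh{\cc Q}$.
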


\begin{proof}
By~\cite[5.9]{G} $\cc C/\cc S$ is a locally finitely presented
Grothendieck category, and so the fl-topology on $\inj(\cc C/\cc S)$
makes sense. Let $O(\cc P)$ be an open subset of $\inj_{fl}(\cc
C/\cc S)$ with $\cc P$ a localizing subcategory of finite type of
$\cc C/\cc S$. There is a unique localizing subcategory $\cc T$ of
$\cc C$ such that $(\cc C/\cc S)/\cc P\cong\cc C/\cc T$. We claim
that $\cc T$ is of finite type.

It is plainly enough to verify that $X=\sum_{\cc C}X_\alpha$ is a
$\cc T$-closed object whenever each $X_\alpha$ is $\cc T$-closed.
Since $\cc S$ and $\cc P$ are of finite type in $\cc C$ and $\cc
C/\cc S$ respectively, $X$ is both $\cc S$-closed and $\cc P$-closed
in $\cc C$ and $\cc C/\cc S$ respectively. It follows that $X$ is
$\cc T$-closed. Therefore $\cc T$ is of finite type and $O(\cc
P)=\inj_{fl}(\cc C/\cc S)\cap O(\cc T)$.

Now let $\cc Q$ be a localizing subcategory of finite type in $\cc
C$. We want to show that $\inj_{fl}(\cc C/\cc S)\cap O(\cc Q)$ is
open in $\inj_{fl}(\cc C/\cc S)$. Let $\wh{\cc Q}=\{X_{\cc S}\mid
X\in\cc Q\}$, then $\wh{\cc Q}$ is closed under direct sums,
subobjects, quotient objects in $\cc C/\cc S$ (see the proof of
Lemma~\ref{lenya}) and $O(\wh{\cc Q})=O(\surd\wh{\cc
Q})=\inj_{fl}(\cc C/\cc S)\cap O(\cc Q)$. We have to show that
$\surd\wh{\cc Q}$ is of finite type in $\cc C/\cc S$.

If we show that every direct limit $C=\lp_{\cc C/\cc S} C_\delta$ of
$\surd\wh{\cc Q}$-closed objects $C_\delta$ has no $\surd\wh{\cc
Q}$-torsion, it will follow from~\cite[5.8]{G} that $\surd\wh{\cc
Q}$ is of finite type. Obviously, each $C_\delta$ is $\cc Q$-closed.

Using Proposition~\ref{surd}, it is enough to check that $\Hom_{\cc
C}(X,C)=0$ for any object $X\in\wh{\cc Q}$. Since $\cc S$ is of
strictly finite type, one has $C\cong\lp_{\cc C} C_\delta$. Each
$C_\delta$ is $\cc Q$-closed, and therefore $\lp_{\cc C} C_\delta$
has no $\cc Q$-torsion by~\cite[5.8]{G} and the fact that $\cc Q$ is
of finite type. There is an object $Y\in\cc Q$ such that $Y_{\cc
S}=X$. Then $\Hom_{\cc C/\cc S}(X,C)\cong\Hom_{\cc C}(Y,\lp_{\cc C}
C_\delta)=0$, as required.
\end{proof}

\section{The topological space $\inj_{fl,\otimes}(X)$}

In the preceding section we studied some general properties of
finite localizations in locally finitely presented Grothendieck
categories and their relation with the topological space
$\inj_{fl}\cc C$. In this section we introduce and study the
topological space $\inj_{fl,\otimes}(X)$ which is of particular
importance in practice. If otherwise specified, $X$ is supposed to
be quasi-compact and quasi-separated.

Given a quasi-compact open subset $U\subset X$, we denote by $\cc
S_U=\{\cc F\in\Qcoh(X)\mid \cc F|_U=0\}$. It follows
from~\cite[5.9]{G} and the fact that $\cc F|_U\in\fp(\Qcoh(U))$
whenever $\cc F\in\fp(\Qcoh(X))$ that $\cc S_U$ is of strictly
finite type. Below we shall need the following

\begin{lem}\label{zeg}%\marginpar{zeg}
Let $X$ be a quasi-compact and quasi-separated scheme and let $U,V$
be quasi-compact open subsets. Then the following relation holds:
   $$\cc S_{U\cap V}=\surd(\cc S_U\cup\cc S_V).$$
\end{lem}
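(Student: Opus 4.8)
The plan is to prove the two inclusions $\surd(\cc S_U\cup\cc S_V)\subseteq\cc S_{U\cap V}$ and $\cc S_{U\cap V}\subseteq\surd(\cc S_U\cup\cc S_V)$ separately. The first inclusion is immediate: if $\cc F|_U=0$ then certainly $\cc F|_{U\cap V}=0$, so $\cc S_U\subseteq\cc S_{U\cap V}$, and likewise $\cc S_V\subseteq\cc S_{U\cap V}$; since $\cc S_{U\cap V}$ is a localizing subcategory it contains the smallest localizing subcategory generated by $\cc S_U\cup\cc S_V$, which is exactly $\surd(\cc S_U\cup\cc S_V)$.

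For the reverse inclusion, by Proposition~\ref{zu} (using that $\cc S_{U\cap V}$ is of finite type, indeed of strictly finite type as noted just before the lemma) it suffices to show that every finitely presented $\cc F\in\cc S_{U\cap V}$ lies in $\surd(\cc S_U\cup\cc S_V)$. So fix $\cc F\in\fp(\Qcoh(X))$ with $\cc F|_{U\cap V}=0$. The idea is to realize $\cc F$ as built, via the torsion/localization machinery, out of a piece supported off $U$ and a piece supported off $V$. Write $Z=X\setminus U$ and consider the torsion functor $t_{\cc S_U}$: by Proposition~\ref{loc} applied to the quasi-compact open immersion $j:U\to X$ we have $t_{\cc S_U}(\cc F)=\cc H^0_Z(\cc F)$ and there is an exact sequence $0\to t_{\cc S_U}(\cc F)\to\cc F\to j_*(\cc F|_U)$. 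The key point is that the image $\cc F'$ of $\cc F$ in $j_*(\cc F|_U)$, i.e. $\cc F/t_{\cc S_U}(\cc F)$, is $\cc S_U$-torsionfree and satisfies $\cc F'|_{U\cap V}=(\cc F|_U)|_{U\cap V}=0$; working inside $\Qcoh(U)\cong\Qcoh(X)/\cc S_U$, the sheaf $\cc F|_U$ is supported in the closed subset $U\setminus(U\cap V)=U\cap Z_V$ where $Z_V=X\setminus V$, hence $\cc F|_U\in\cc S_{U\setminus(U\cap V)}$ as a localizing subcategory of $\Qcoh(U)$. Pulling back: $\cc F'\in\surd(\wh{\cc S_V})$ where $\wh{\cc S_V}=\{\cc G_{\cc S_U}\mid\cc G\in\cc S_V\}$, and by Lemma~\ref{lenya} (the join description) $\surd(\cc S_U\cup\surd\wh{\cc S_V})=\surd(\cc S_U\cup\cc S_V)$, so $\cc F'\in\surd(\cc S_U\cup\cc S_V)$. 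Since also $t_{\cc S_U}(\cc F)\in\cc S_U\subseteq\surd(\cc S_U\cup\cc S_V)$ and $\surd(\cc S_U\cup\cc S_V)$ is closed under extensions, we conclude $\cc F\in\surd(\cc S_U\cup\cc S_V)$.

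I expect the main obstacle to be the bookkeeping identifying $\cc F|_U$ as an object of the localizing subcategory $\cc S_V^U:=\{\cc G\in\Qcoh(U)\mid\cc G|_{U\cap V}=0\}$ of $\Qcoh(U)$ and matching this, under the equivalence $\Qcoh(U)\cong\Qcoh(X)/\cc S_U$, with $\wh{\cc S_V}$; one needs that $\cc S_V^U$ is precisely the image $\wh{\cc S_V}$ of $\cc S_V$ under localization, which follows because $(\cc F)|_{U\cap V}=((\cc F)|_U)|_{U\cap V}$ and the localization functor $(-)_{\cc S_U}:\Qcoh(X)\to\Qcoh(U)$ is exact and essentially surjective, so every $\cc S_V^U$-object is of the form $\cc G_{\cc S_U}$ with $\cc G$ its own preimage, which lies in $\cc S_V$ after replacing by $\cc G\times_{j_*(\cc G)}(\text{appropriate subsheaf})$ — concretely, if $\cc G\in\Qcoh(U)$ with $\cc G|_{U\cap V}=0$ then $j_*\cc G\in\cc S_V$ and $(j_*\cc G)_{\cc S_U}=\cc G$. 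A subsidiary point worth stating cleanly is that because $U$, $V$, and $U\cap V$ are all quasi-compact (quasi-separatedness of $X$ gives $U\cap V$ quasi-compact) and $j:U\to X$ is a quasi-compact map, all the relevant localizing subcategories are of strictly finite type, so Proposition~\ref{zu} genuinely applies. Alternatively, one can avoid passing through $\Qcoh(U)$ entirely by a more direct filtration argument using Proposition~\ref{surd}, but the localization-pair route above is cleaner and leans on machinery already in place.
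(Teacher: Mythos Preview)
Your argument is correct and shares its core idea with the paper's proof---the torsion exact sequence $0\to t_{\cc S_U}(\cc F)\to\cc F\to j_*j^*(\cc F)$ coming from Proposition~\ref{loc}---but you have added two unnecessary detours. First, the reduction to finitely presented $\cc F$ via Proposition~\ref{zu} is not needed: the argument works for every $\cc F\in\cc S_{U\cap V}$. Second, and more importantly, the passage through $\Qcoh(U)\cong\Qcoh(X)/\cc S_U$, the identification of $\wh{\cc S_V}$ with $\cc S_V^U$, and the invocation of Lemma~\ref{lenya} can all be replaced by the single observation you yourself make in your ``obstacle'' discussion: if $\cc G\in\Qcoh(U)$ satisfies $\cc G|_{U\cap V}=0$ then $j_*\cc G\in\cc S_V$ (since sections of $j_*\cc G$ over any open $W\subseteq V$ are sections of $\cc G$ over $W\cap U\subseteq U\cap V$). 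Applied to $\cc G=j^*\cc F=\cc F|_U$, this gives $j_*j^*(\cc F)\in\cc S_V$ directly, hence $\im(\lambda_{\cc F})\in\cc S_V$, and the extension $0\to t_{\cc S_U}(\cc F)\to\cc F\to\im(\lambda_{\cc F})\to 0$ finishes the proof in one line. The paper's proof is exactly this three-line version; your longer route is valid but hides the simplicity of the statement.
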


\begin{proof}
Clearly, $\cc S_{U\cap V}$ contains both $\cc S_{U}$ and $\cc S_{V}$
and so $\cc S_{U\cap V}\supset\surd(\cc S_U\cup\cc S_V)$. Let $\cc
F\in\cc S_{U\cap V}$ and let $j:U\to X$ be the canonical inclusion.
Then $j_*j^*(\cc F)\in\cc S_V$. One has the following exact sequence
   $$0\to t_{\cc S_U}(\cc F)\to\cc F\lra{\lambda_{\cc F}}j_*j^*(\cc F).$$
Since $t_{\cc S_U}(\cc F)\in\cc S_U$ and $\im(\lambda_{\cc F})\in\cc
S_V$, we see that $\cc F\in\surd(\cc S_U\cup\cc S_V)$.
\end{proof}

We denote by $\inj_{fl}(X)$ the topological space
$\inj_{fl}(\Qcoh(X))$.

\begin{cor}\label{zhg}%\marginpar{zhg}
Let $X$ be a quasi-compact and quasi-separated scheme and $X=U\cup
V$ with $U,V$ quasi-compact open subsets. Then the following
relations hold:
   \begin{gather*}
    \inj(X)=\inj(U)\cup\inj(V),\quad\inj(U\cap
    V)=\inj(U)\cap\inj(V)\\
    \inj_{fl}(X)=\inj_{fl}(U)\cup\inj_{fl}(V),\quad\inj_{fl}(U\cap V)=\inj_{fl}(U)\cap\inj_{fl}(V).
   \end{gather*}
\end{cor}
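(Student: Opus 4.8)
The plan is to reduce everything to the identification of the localizing subcategories $\cc S_U$ of $\Qcoh(X)$ with closed subsets of $\inj_{fl}(X)$ via Theorem~\ref{zg}, together with the compatibility between the fl-topology on a quotient category and the subspace topology established in Proposition~\ref{lenn}. Recall first that by Proposition~\ref{loc} the restriction functor $j^*:\Qcoh(X)\to\Qcoh(U)$ is a localization with kernel $\cc S_U$, so $\Qcoh(U)\cong\Qcoh(X)/\cc S_U$; moreover $\cc S_U$ is of strictly finite type (as noted just before Lemma~\ref{zeg}). Hence by Proposition~\ref{lenn} the space $\inj_{fl}(U)=\inj_{fl}(\Qcoh(X)/\cc S_U)$ is exactly the subspace $\inj_{fl}(X)\setminus O(\cc S_U)$ of $\inj_{fl}(X)$, with the induced topology, and similarly for $V$ and for $U\cap V$. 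The same identifications on the level of underlying sets (ignoring topology) give the first line of the corollary once we know $\inj(X)=\inj(U)\cup\inj(V)$ as sets: an indecomposable injective $E\in\inj(X)$ either has $t_{\cc S_U}(E)=0$, in which case it survives in $\Qcoh(U)$ and lies in $\inj(U)$, or $t_{\cc S_U}(E)\ne 0$; in the latter case, since $E$ is uniform, $t_{\cc S_U}(E)$ is essential in $E$, so $E$ is the injective envelope of an object of $\cc S_U=\cc S_{X\setminus U}$, and because $X=U\cup V$ we have $X\setminus U\subseteq V$, forcing $E$ to survive in $\Qcoh(V)$, i.e.\ $E\in\inj(V)$. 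The equality $\inj(U\cap V)=\inj(U)\cap\inj(V)$ then follows by applying the same dichotomy twice, or directly from $\cc S_{U\cap V}=\surd(\cc S_U\cup\cc S_V)$ (Lemma~\ref{zeg}) together with $O(\cc S_{U\cap V})=O(\cc S_U)\cup O(\cc S_V)$ from Theorem~\ref{zg}.

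For the second line, I would argue as follows. From $\cc S_{U\cap V}=\surd(\cc S_U\cup\cc S_V)$ (Lemma~\ref{zeg}) and the fact that the map $\cc S\mapsto O(\cc S)$ of Theorem~\ref{zg} sends joins to unions, we get $O(\cc S_{U\cap V})=O(\cc S_U)\cup O(\cc S_V)$ inside $\inj_{fl}(X)$. Passing to complements and using the subspace identifications from Proposition~\ref{lenn} yields
\[
\inj_{fl}(U\cap V)=\inj_{fl}(X)\setminus\bigl(O(\cc S_U)\cup O(\cc S_V)\bigr)=\bigl(\inj_{fl}(X)\setminus O(\cc S_U)\bigr)\cap\bigl(\inj_{fl}(X)\setminus O(\cc S_V)\bigr)=\inj_{fl}(U)\cap\inj_{fl}(V),
\]
and this is an equality of topological spaces because each of the three subspaces carries the topology induced from $\inj_{fl}(X)$. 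Dually, $X=U\cup V$ forces $\cc S_U\cap\cc S_V=\cc S_X=0$ (a sheaf vanishing on both $U$ and $V$ vanishes on $X$), so $O(\cc S_U)\cap O(\cc S_V)=O(0)=\emptyset$; hence the complements $\inj_{fl}(U)$ and $\inj_{fl}(V)$ cover $\inj_{fl}(X)$, giving $\inj_{fl}(X)=\inj_{fl}(U)\cup\inj_{fl}(V)$, again with compatible topologies by Proposition~\ref{lenn}.

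The main point to be careful about is the passage from the abstract quotient-category statements to the geometric spaces: one must check that the equivalence $\Qcoh(U)\simeq\Qcoh(X)/\cc S_U$ of Proposition~\ref{loc} is compatible with restriction along the open inclusions $U\cap V\hookrightarrow U\hookrightarrow X$, so that the three subspace identifications are simultaneously realized inside the single ambient space $\inj_{fl}(X)$; this is where quasi-separatedness of $X$ enters, ensuring all the relevant inclusions are quasi-compact and the relevant restriction functors are genuine localizations. Once that compatibility is in place, the set-level equalities from the first paragraph together with the topology comparison of Proposition~\ref{lenn} immediately upgrade to homeomorphisms, and there is no further obstacle.
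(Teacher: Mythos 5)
Your proof is correct and follows essentially the same route as the paper's, which simply cites $\cc S_U\cap\cc S_V=0$ together with Propositions~\ref{loc}, \ref{tp}, \ref{len}, \ref{lenn}, Theorem~\ref{zg}, and Lemma~\ref{zeg}; you have filled in the details in exactly the intended way. The only small omission is that for the topological (not just set-level) version of the first line you should also invoke Proposition~\ref{len} (the Gabriel-topology analogue of Proposition~\ref{lenn}), and your direct essentiality argument for $\inj(X)=\inj(U)\cup\inj(V)$ is just an unwinding of the fact that $\cc S_U\cap\cc S_V=0$ together with uniformity of injectives, which is how Proposition~\ref{tp} handles intersections.
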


\begin{proof}
It follows from the fact that $\cc S_U\cap\cc S_V=0$,
Propositions~\ref{loc}, \ref{tp}, \ref{len}, \ref{lenn},
Theorem~\ref{zg}, and Lemma~\ref{zeg}.
\end{proof}

Let $\latl(X)$ (respectively, $\latfl(X)$) denote the lattice
$\latl(\Qcoh(X))$ (respectively, $\latfl(\Qcoh(X))$). It follows
from Proposition~\ref{tp} and Theorem~\ref{zg} that the map
$\latl(X)\to\lato(\inj(X))$ (respectively,
$\latfl(X)\to\lato(\inj_{fl}(X))$) is a lattice isomorphism. Suppose
$U\subset X$ is a quasi-compact open subset. Then the map
   $$\alpha_{X,U}:\latl(X)\to\latl(U),\quad\cc S\mapsto\surd(\wh{\cc S}|_U),$$
where $\wh{\cc S}|_U=\{\cc F|_U=\cc F_{\cc S_U}\mid\cc F\in\cc S\}$,
is a lattice map. If $V$ is another quasi-compact subset of $X$ such
that $X=U\cup V$ then, obviously,
   $$\alpha_{X,U\cap V}=\alpha_{U,U\cap V}\circ\alpha_{X,U}=\alpha_{V,U\cap V}\circ\alpha_{X,V}.$$
By the proof of Proposition~\ref{lenn} $\alpha_{X,U}(\cc
S)\in\latfl(U)$ for every $\cc S\in\latfl(U)$. Thus we have a map
   $$\alpha_{X,U}:\latfl(X)\to\latfl(U).$$

The notion of a pullback for lattices satisfying the obvious
universal property is easily defined.

\begin{lem}\label{pull}%\marginpar{pull}
The commutative squares of lattices
   $$\begin{CD}
      \latl(X)@>\alpha_{X,U}>>\latl(U)\\
      @V\alpha_{X,V}VV@VV{\alpha_{U,U\cap V}}V\\
      \latl(V)@>\alpha_{V,U\cap V}>>\latl(U\cap V)
     \end{CD}$$
and
   $$\begin{CD}
      \latfl(X)@>\alpha_{X,U}>>\latfl(U)\\
      @V\alpha_{X,V}VV@VV{\alpha_{U,U\cap V}}V\\
      \latfl(V)@>\alpha_{V,U\cap V}>>\latfl(U\cap V)
     \end{CD}$$
are pullback.
\end{lem}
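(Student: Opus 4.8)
The two squares have the same shape, and by the remark preceding the lemma the $\latfl$-square is a sub-diagram of the $\latl$-square (each $\alpha_{X,U}$ restricts to finite-type localizing subcategories, the vertical inclusions $\latfl\hookrightarrow\latl$ are compatible with all the maps, and $\latfl$ is closed under the lattice operations by Theorem~\ref{zg}). So I would prove the $\latl$-statement and then check that the induced map from $\latl(X)$ to the pullback restricts to a bijection on the finite-type sublattices; the latter is immediate once we know that an $\cc S\in\latl(X)$ with $\alpha_{X,U}(\cc S)$ and $\alpha_{X,V}(\cc S)$ both of finite type is itself of finite type, which is exactly the argument used in the proof of Proposition~\ref{lenn} (an object that is a directed sum of $\cc S$-closed objects is $\cc S_U$-closed and $\cc S_V$-closed, hence $\surd(\cc S_U\cup\cc S_V)=\cc S_{U\cap V}$-closed by Lemma~\ref{zeg}, and then $\cc S$-closed because $\cc S\supseteq\cc S_{U\cap V}$... more precisely because $\cc S$ is determined by its images — see below). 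The cleanest route, though, is to pass through the topological description: by Proposition~\ref{tp}, $\latl(-)\cong\lato(\inj_{gab}(-))$, and Corollary~\ref{zhg} gives $\inj(X)=\inj(U)\cup\inj(V)$ with $\inj(U\cap V)=\inj(U)\cap\inj(V)$, while Proposition~\ref{len} identifies the topologies on the pieces with the subspace topologies. So the lemma reduces to the purely topological fact that for a space $X$ covered by two open sets $U,V$, the lattice of open subsets of $X$ is the pullback of the lattices of open subsets of $U$ and $V$ over that of $U\cap V$.

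That topological fact is what I would actually prove, and it is elementary. Given open $O_U\subseteq U$ and $O_V\subseteq V$ with $O_U\cap(U\cap V)=O_V\cap(U\cap V)$, set $O=O_U\cup O_V$. Then $O$ is open in $X$ (union of two opens, each open in an open subset, hence open in $X$), and one checks $O\cap U=O_U$ and $O\cap V=O_V$: indeed $O\cap U=(O_U\cap U)\cup(O_V\cap U)=O_U\cup(O_V\cap U\cap V)=O_U\cup(O_U\cap U\cap V)=O_U$, using the compatibility hypothesis, and symmetrically for $V$. Uniqueness is clear since $\{U,V\}$ covers $X$, so any open $O'$ with $O'\cap U=O_U$, $O'\cap V=O_V$ satisfies $O'=(O'\cap U)\cup(O'\cap V)=O_U\cup O_V=O$. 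This establishes the universal property, and it carries over verbatim to the fl-topology by the second line of Corollary~\ref{zhg} together with Proposition~\ref{lenn}.

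To phrase this directly in terms of categories without the topological detour, one translates: $\alpha_{X,U}(\cc S)=\surd(\wh{\cc S}|_U)$ corresponds, under the isomorphism of Proposition~\ref{loc} of $\Qcoh(U)$ with $\Qcoh(X)/\cc S_U$, to the join $\surd(\cc S\cup\cc S_U)$ viewed inside $\Qcoh(U)$ (this is essentially Lemma~\ref{lenya}, since $\wh{\cc S}|_U=\{\cc F_{\cc S_U}\mid \cc F\in\cc S\}$ generates the image localizing subcategory). Given $\cc Q_U\in\latl(U)$ and $\cc Q_V\in\latl(V)$ agreeing in $\latl(U\cap V)$, pull them back to localizing subcategories $\cc P_U\supseteq\cc S_U$ and $\cc P_V\supseteq\cc S_V$ of $\Qcoh(X)$, and put $\cc S=\cc P_U\cap\cc P_V$. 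Then $\alpha_{X,U}(\cc S)=\cc Q_U$ and $\alpha_{X,V}(\cc S)=\cc Q_V$ (this is where the agreement over $U\cap V$ is used, via Lemma~\ref{zeg} and the fact that $\cc S_U\cap\cc S_V=0$ from Corollary~\ref{zhg}), and $\cc S$ is the unique such subcategory because $\cc S=\surd((\cc S\cup\cc S_U)\cap(\cc S\cup\cc S_V))$ already holds whenever $\cc S\supseteq\cc S_U\cap\cc S_V=0$ — i.e.\ $\cc S$ is recovered from its two images. Finite type is preserved in both directions by the Proposition~\ref{lenn} argument noted above, using Lemma~\ref{ggg} for joins.

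**Main obstacle.** The only genuinely non-formal point is verifying $\cc S\cap\cc S_U$-style reconstructions, i.e.\ that $\cc S$ is determined by the pair $(\alpha_{X,U}(\cc S),\alpha_{X,V}(\cc S))$ and that an arbitrary compatible pair actually arises; everything else is bookkeeping with the already-established dictionary between localizing subcategories, open sets of $\inj_{gab}$, and (for finite type) of $\inj_{fl}$. Concretely the subtle step is the identity $O_U\cup(O_V\cap U\cap V)=O_U$ in the categorical language — namely that $\surd(\cc S\cup\cc S_U)$ and $\surd(\cc S\cup\cc S_V)$ restrict to the "same" thing over $U\cap V$ precisely when the original hypothesis holds, which rests on Lemma~\ref{zeg}'s $\cc S_{U\cap V}=\surd(\cc S_U\cup\cc S_V)$. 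I would handle it by reducing to the open-cover statement for $\inj_{gab}$ (resp.\ $\inj_{fl}$) and citing Propositions~\ref{tp}, \ref{len}, \ref{lenn} and Corollary~\ref{zhg}; this makes the pullback property a one-line consequence of the corresponding statement for topological spaces.
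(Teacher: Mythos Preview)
Your approach via the topological dictionary (Proposition~\ref{tp}, Theorem~\ref{zg}, Propositions~\ref{len} and~\ref{lenn}, Corollary~\ref{zhg}) is exactly the paper's: the paper's entire proof is the observation that the two squares are isomorphic to the pullback squares of open-set lattices for the covering $\inj(X)=\inj(U)\cup\inj(V)$ (respectively $\inj_{fl}(X)=\inj_{fl}(U)\cup\inj_{fl}(V)$).

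One small slip to fix: in $\inj_{gab}(X)$ (and likewise in $\inj_{fl}(X)$) the subsets $\inj(U)$ and $\inj(V)$ are \emph{closed}, not open --- they are the complements of $(\cc S_U)$ and $(\cc S_V)$ --- so your sentence ``union of two opens, each open in an open subset, hence open in $X$'' does not apply as written. What you need is the closed-cover glueing lemma: for a finite closed cover $\{A,B\}$ of a space $Y$, a subset $O\subseteq Y$ is open \ifff $O\cap A$ is open in $A$ and $O\cap B$ is open in $B$. With that replacement your verification $O\cap A=O_A$, $O\cap B=O_B$ goes through verbatim (it never used openness of the pieces), and the pullback property follows. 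The alternative categorical description you sketch (setting $\cc S=\cc P_U\cap\cc P_V$) is the translation of the same thing and is also fine, but unnecessary once the topological statement is in place.
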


\begin{proof}
It is enough to observe that the commutative squares of lemma are
isomorphic to the corresponding pullback squares of lattices of open
sets
   $$\begin{CD}
      \lato(\inj(X))@>\alpha_{X,U}>>\lato(\inj(U))\\
      @V\alpha_{X,V}VV@VV{\alpha_{U,U\cap V}}V\\
      \lato(\inj(V))@>\alpha_{V,U\cap V}>>\lato(\inj(U\cap V))
     \end{CD}$$
and
   $$\begin{CD}
      \lato(\inj_{fl}(X))@>\alpha_{X,U}>>\lato(\inj_{fl}(U))\\
      @V\alpha_{X,V}VV@VV{\alpha_{U,U\cap V}}V\\
      \lato(\inj_{fl}(V))@>\alpha_{V,U\cap V}>>\lato(\inj_{fl}(U\cap V))
     \end{CD}$$
(see Proposition~\ref{tp}, Theorem~\ref{zg}, and
Corollary~\ref{zhg}).
\end{proof}

Recall that $\Qcoh(X)$ is monoidal with the tensor product
$\otimes_X$ right exact and preserving direct limits (see~\cite[\S
II.2]{KS}).

\begin{defs}{\rm
A localizing subcategory $\cc S$ of $\Qcoh(X)$ is said to be {\it
tensor\/} if $\cc F\otimes_X\cc G\in\cc S$ for every $\cc F\in\cc S$
and $\cc G\in\Qcoh(X)$.

}\end{defs}

\begin{lem}\label{pp}%\marginpar{pp}
A localizing subcategory of finite type $\cc S\subset\Qcoh(X)$ is
tensor if and only if $\cc F\otimes_X\cc G\in\cc S$ for every $\cc
F\in\cc S\cap\fp(\Qcoh(X))$ and $\cc G\in\fp(\Qcoh(X))$.
\end{lem}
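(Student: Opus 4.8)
The plan is to prove the nontrivial implication: if $\cc F\otimes_X\cc G\in\cc S$ for all finitely presented $\cc F\in\cc S$ and all finitely presented $\cc G$, then $\cc S$ is tensor, i.e. $\cc F\otimes_X\cc G\in\cc S$ for arbitrary $\cc F\in\cc S$ and arbitrary $\cc G\in\Qcoh(X)$. The reverse implication is trivial, since $\fp(\Qcoh(X))\subseteq\Qcoh(X)$.

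First I would reduce the general $\cc G$ to a finitely presented one. By Proposition~\ref{qcoh}, $\Qcoh(X)$ is locally finitely presented, so every $\cc G\in\Qcoh(X)$ is a direct limit $\cc G=\lp_i\cc G_i$ with $\cc G_i\in\fp(\Qcoh(X))$. Since $\otimes_X$ preserves direct limits (recalled just before the statement), $\cc F\otimes_X\cc G=\lp_i(\cc F\otimes_X\cc G_i)$. As $\cc S$ is localizing, hence closed under direct limits, it suffices to show $\cc F\otimes_X\cc G_i\in\cc S$ for all $\cc F\in\cc S$ and all finitely presented $\cc G_i$; so we may assume $\cc G\in\fp(\Qcoh(X))$.

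Next I would reduce the general $\cc F\in\cc S$ to a finitely presented one. Here is the key point where finite type is used: by Proposition~\ref{zu}, $\cc S=\surd(\fp(\Qcoh(X))\cap\cc S)$. So I would fix a finitely presented $\cc G$ and consider the full subcategory $\cc T_{\cc G}=\{\cc F\in\Qcoh(X)\mid\cc F\otimes_X\cc G\in\cc S\}$. Since $\otimes_X\cc G$ is right exact and preserves direct limits, and $\cc S$ is localizing (closed under quotients, extensions, direct limits — and since $\otimes_X\cc G$ need not be left exact, I must be a little careful about subobjects; but $\cc S$ being Serre and the image of a subobject of $\cc F$ under $\otimes_X\cc G$ being a subobject of $\cc F\otimes_X\cc G$ up to the torsion kernel, one checks $\cc T_{\cc G}$ is closed under subobjects too), the subcategory $\cc T_{\cc G}$ is localizing. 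By hypothesis $\cc T_{\cc G}$ contains $\fp(\Qcoh(X))\cap\cc S$, hence contains $\surd(\fp(\Qcoh(X))\cap\cc S)=\cc S$. Thus $\cc F\otimes_X\cc G\in\cc S$ for all $\cc F\in\cc S$ and all finitely presented $\cc G$, which combined with the first reduction completes the proof.

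The main obstacle is verifying that $\cc T_{\cc G}$ really is a \emph{localizing} subcategory — specifically that it is closed under subobjects, given that $-\otimes_X\cc G$ is only right exact. The remedy is to note that if $\cc F'\hookrightarrow\cc F$ then the cokernel of $\cc F'\otimes_X\cc G\to\cc F\otimes_X\cc G$ is $(\cc F/\cc F')\otimes_X\cc G$, and the image of $\cc F'\otimes_X\cc G$ in $\cc F\otimes_X\cc G$ is a quotient of $\cc F'\otimes_X\cc G$; but to conclude $\cc F'\otimes_X\cc G\in\cc S$ one wants $\cc F\otimes_X\cc G\in\cc S$ to force its subobjects into $\cc S$ — which it does, as $\cc S$ is Serre. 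So in fact closure under subobjects of $\cc T_{\cc G}$ follows directly: if $\cc F\in\cc T_{\cc G}$ and $\cc F'\subseteq\cc F$, then $\cc F'\otimes_X\cc G$ maps to $\cc F\otimes_X\cc G\in\cc S$ with image a subobject of an object of $\cc S$, hence in $\cc S$, and kernel a quotient of $\operatorname{Tor}_1(\cc F/\cc F',\cc G)$ — at which point one instead argues more cleanly using Proposition~\ref{surd}: every object of $\cc S=\surd(\fp(\Qcoh(X))\cap\cc S)$ is built by a transfinite filtration from sums of finitely presented torsion objects, and right-exactness of $-\otimes_X\cc G$ together with closure of $\cc S$ under quotients, extensions and direct limits propagates membership through the filtration. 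This filtration argument sidesteps the subobject subtlety entirely and is the cleanest route.
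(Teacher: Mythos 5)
Your first reduction (to finitely presented $\cc G$) is correct and matches the paper exactly. The problem is in your second reduction, and you half-notice the difficulty yourself without actually resolving it.

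Your $\cc T_{\cc G}$ route stalls exactly where you say it does: if $\cc F'\hookrightarrow\cc F$ with $\cc F\otimes_X\cc G\in\cc S$, the map $\cc F'\otimes_X\cc G\to\cc F\otimes_X\cc G$ has image in $\cc S$, but its kernel receives a surjection from $\operatorname{Tor}_1(\cc F/\cc F',\cc G)$ and there is no reason for that to lie in $\cc S$; so closure of $\cc T_{\cc G}$ under subobjects is not established. Your ``cleaner'' filtration route does not sidestep this: by Proposition~\ref{surd}, the filtration subquotients lie in $\langle\cc X^\ps\rangle$, i.e.\ they are \emph{subquotients} of direct sums of objects in $\cc X=\cc S\cap\fp(\Qcoh(X))$, not merely direct sums. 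To push membership in $\cc S$ through $-\otimes_X\cc G$ on such a subquotient you again need to cross a monomorphism against a functor that is only right exact, which is precisely the gap you were trying to avoid. Your phrase ``built by a transfinite filtration from sums of finitely presented torsion objects'' quietly drops the ``sub'' in ``subquotient'', and the argument only works with that drop.

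The paper's proof (which is one sentence) uses a sharper fact that is established inside the proof of Proposition~\ref{zu}: every $\cc F\in\cc S$ is an \emph{epimorphic image} of some $\bigoplus_{i}S_i$ with each $S_i\in\cc S\cap\fp(\Qcoh(X))$. With that, for finitely presented $\cc G$, right exactness and commutation of $\otimes_X$ with direct sums give an epimorphism $\bigoplus_i(S_i\otimes_X\cc G)\twoheadrightarrow\cc F\otimes_X\cc G$; each $S_i\otimes_X\cc G\in\cc S$ by hypothesis, so the source is in $\cc S$, and $\cc S$ is closed under quotients. No subobjects of $\cc F$ ever enter, so no left-exactness is needed. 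If you replace your filtration step by this observation, the proof is correct and is the paper's proof.
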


\begin{proof}
It is enough to observe that every $\cc F\in\cc S$ is a quotient
object of the direct sum of objects from $\cc S\cap\fp(\Qcoh(X))$
and that every object $\cc G\in\Qcoh(X)$ is a direct limit of
finitely presented objects.
\end{proof}

\begin{lem}\label{ppp}%\marginpar{ppp}
Let $\cc X\subset\Qcoh(X)$ be a subcategory closed under direct
sums, subobjects, quotient objects, and tensor products. Then
$\surd\cc X$ is a tensor localizing subcategory.
\end{lem}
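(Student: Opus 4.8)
The plan is to reduce everything to the intrinsic description of $\surd\cc X$ furnished by Proposition~\ref{surd} and then to exploit the exactness and direct-limit-preservation properties of $\otimes_X$. First I would record that, since $\cc X$ is already closed under direct sums, subobjects, and quotient objects, we have $\langle\cc X^\ps\rangle=\cc X$, so Proposition~\ref{surd} says that $Y\in\surd\cc X$ precisely when $Y$ admits a filtration $Y_0\subset Y_1\subset\cdots\subset Y_\beta\subset\cdots$ with $Y=\bigcup_\beta Y_\beta$, with $Y_\gamma=\bigcup_{\beta<\gamma}Y_\beta$ at limit ordinals, and with $Y_0,\,Y_{\beta+1}/Y_\beta\in\cc X$. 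The goal is then to show that for such a $Y$ and arbitrary $\cc G\in\Qcoh(X)$ one has $Y\otimes_X\cc G\in\surd\cc X$; by Proposition~\ref{surd} again it suffices to manufacture a suitable filtration of $Y\otimes_X\cc G$.

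The key step is to apply the functor $-\otimes_X\cc G$ to the given filtration of $Y$ and define $W_\beta:=\im(Y_\beta\otimes_X\cc G\to Y\otimes_X\cc G)$. Since $\otimes_X$ is right exact, $W_{\beta+1}/W_\beta$ is a quotient of $(Y_{\beta+1}/Y_\beta)\otimes_X\cc G$; as $Y_{\beta+1}/Y_\beta\in\cc X$ and $\cc X$ is closed under tensor products and quotient objects, $W_{\beta+1}/W_\beta\in\cc X$, and likewise $W_0$ is a quotient of $Y_0\otimes_X\cc G\in\cc X$, hence $W_0\in\cc X$. Because $\otimes_X$ preserves direct limits and in particular directed unions, $Y\otimes_X\cc G=\bigl(\bigcup_\beta Y_\beta\bigr)\otimes_X\cc G=\bigcup_\beta W_\beta$, and the limit-ordinal condition $W_\gamma=\bigcup_{\beta<\gamma}W_\beta$ is inherited for the same reason. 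Thus $\{W_\beta\}$ is a filtration of $Y\otimes_X\cc G$ of the type required by Proposition~\ref{surd}, so $Y\otimes_X\cc G\in\surd\cc X$. This shows $\surd\cc X$ is a tensor subcategory, and it is localizing by definition, completing the proof.

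The only mild subtlety — the place I expect to need a moment's care — is the behaviour at limit ordinals and the compatibility of the images $W_\beta$ with the transition maps: one must check that the natural maps $W_\beta\to W_{\beta+1}$ are monomorphisms so that the $W_\beta$ genuinely form an increasing filtration of subobjects of $Y\otimes_X\cc G$ (they are, being images of composites into $Y\otimes_X\cc G$ factoring through one another), and that directed colimits commute with the formation of these images, which is exactly where preservation of direct limits by $\otimes_X$ is used. Everything else is a routine diagram chase, so I would not belabour it.
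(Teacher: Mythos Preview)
Your proof is correct and rests on the same ingredients as the paper's argument---Proposition~\ref{surd}, right exactness of $\otimes_X$, and preservation of direct limits---but you organize them differently. The paper runs a transfinite induction showing that each $Y_\beta\otimes_X\cc G$ lies in $\surd\cc X$: at successor stages it uses the right-exact sequence
\[
Y_\alpha\otimes_X\cc G\lra{f}Y_{\alpha+1}\otimes_X\cc G\to(Y_{\alpha+1}/Y_\alpha)\otimes_X\cc G\to 0
\]
together with closure of $\surd\cc X$ under extensions, and at limit ordinals it uses closure under direct limits. You instead push the whole filtration forward at once, setting $W_\beta=\im(Y_\beta\otimes_X\cc G\to Y\otimes_X\cc G)$ and checking directly that $\{W_\beta\}$ satisfies the hypotheses of Proposition~\ref{surd}. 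Your route is marginally more concrete (one actually sees the witnessing filtration of $Y\otimes_X\cc G$) while the paper's transfinite induction avoids having to argue separately that the $W_\beta$ form an honest chain of subobjects and that images commute with the relevant directed colimits; in substance the two arguments are equivalent.
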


\begin{proof}
By Proposition~\ref{surd} an object $\cc F\in\surd\cc X$ if and only
if there is a filtration
   $$\cc F_0\subset\cc F_1\subset\cdots\subset\cc F_\beta\subset\cdots$$
such that $\cc F=\bigcup_\beta\cc F_\beta$, $\cc
F_\gamma=\bigcup_{\beta<\gamma}\cc F_\beta$ if $\gamma$ is a limit
ordinal, and $\cc F_0,\cc F_{\beta+1}/\cc F_\beta\in\langle\cc
X^\ps\rangle=\cc X$.

We have $\cc F_0\otimes_X\cc G\in\cc X$ for any $\cc G\in\Qcoh(X)$.
Suppose $\beta=\alpha+1$ and $\cc F_\alpha\otimes_X\cc G\in\surd\cc
X$. One has an exact sequence
   $$\cc F_\alpha\otimes_X\cc G\lra{f}\cc F_\beta\otimes_X\cc G\to(\cc F_\beta/\cc F_\alpha)\otimes_X\cc G\to 0.$$
Since $(\cc F_\beta/\cc F_\alpha)\otimes_X\cc G\in\cc X$ and $\im
f\in\surd\cc X$, we see that $\cc F_\beta\otimes_X\cc G\in\surd\cc
X$. If $\gamma$ is a limit ordinal and $\cc F_\beta\otimes_X\cc
G\in\surd\cc X$ for all $\beta<\gamma$, then $\cc
F_\gamma\otimes_X\cc G=\lp_{\beta<\gamma}(\cc F_\beta\otimes_X\cc
G)\in\surd\cc X$. Therefore $\surd\cc X$ is tensor.
\end{proof}

The next statement is of great utility in this paper.

\begin{red}
Let $\ff S$ be the class of quasi-compact, quasi-separated schemes
and let $P$ be a property satisfied by some schemes from $\ff S$.
Assume in addition the following.
\begin{enumerate}
\item $P$ is true for affine schemes.
\item If $X\in\ff S$, $X=U\cup V$, where $U,V$ are quasi-compact open subsets of $X$, and
$P$ holds for $U,V,U\cap V$ then it holds for $X$.
\end{enumerate}
Then $P$ holds for all schemes from $\ff S$.
\end{red}

\begin{proof}
See the proof of \cite[3.9.2.4]{Lipman} and \cite[3.3.1]{BV}.
\end{proof}

\begin{lem}\label{qqq}%\marginpar{qqq}
The join $\cc T=\surd(\cc S\cup\cc Q)$ of two tensor localizing
subcategories $\cc S,\cc Q\subset\Qcoh(X)$ is tensor.
\end{lem}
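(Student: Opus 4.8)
The plan is to exhibit an explicit subcategory $\cc X$ of $\Qcoh(X)$ that is closed under direct sums, subobjects, quotient objects, and tensor products (in the sense that $\cc F\otimes_X\cc G\in\cc X$ whenever $\cc F\in\cc X$ and $\cc G\in\Qcoh(X)$) and satisfies $\surd\cc X=\cc T$, and then to invoke Lemma~\ref{ppp}. The natural candidate is
   $$\cc X=\{\cc F\in\Qcoh(X)\mid\text{there is a subobject }\cc A\subseteq\cc F\text{ with }\cc A\in\cc S\text{ and }\cc F/\cc A\in\cc Q\},$$
the class of quasi-coherent sheaves that are extensions of a $\cc Q$-object by an $\cc S$-object. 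Since $\cc S$ and $\cc Q$ are localizing, hence Serre, one has $\cc S\cup\cc Q\subseteq\cc X$ (take $\cc A=\cc F$, resp. $\cc A=0$) and $\cc X\subseteq\cc T$ (an extension of an object of $\cc Q\subseteq\cc T$ by an object of $\cc S\subseteq\cc T$ lies in the Serre subcategory $\cc T$); applying $\surd$ then gives $\surd\cc X=\surd(\cc S\cup\cc Q)=\cc T$.

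First I would check the three ``exactness'' closure conditions for $\cc X$, all of which are routine diagram chases valid in any Grothendieck category once one knows that $\cc S$ and $\cc Q$ are Serre and closed under arbitrary direct sums. For a subobject $\cc F'\subseteq\cc F$ with witness $\cc A\subseteq\cc F$, the subobject $\cc A\cap\cc F'$ lies in $\cc S$ and $\cc F'/(\cc A\cap\cc F')\hookrightarrow\cc F/\cc A$ lies in $\cc Q$; for a quotient $\cc F\twoheadrightarrow\cc F''$ the image of $\cc A$ lies in $\cc S$ and $\cc F''$ modulo that image is a quotient of $\cc F/\cc A$, hence in $\cc Q$; and for a direct sum $\ps_i\cc F_i$ with witnesses $\cc A_i$, the subobject $\ps_i\cc A_i$ lies in $\cc S$ and the quotient $\ps_i(\cc F_i/\cc A_i)$ lies in $\cc Q$.

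The substantive point is that $\cc X$ is a tensor ideal, and here the hypothesis that $\cc S$ \emph{and} $\cc Q$ are separately tensor is precisely what lets one cope with the failure of $\otimes_X$ to be left exact. Given $\cc F\in\cc X$ with $\cc A\subseteq\cc F$, $\cc A\in\cc S$, $\cc F/\cc A\in\cc Q$, and given $\cc G\in\Qcoh(X)$, apply $-\otimes_X\cc G$ to $0\to\cc A\to\cc F\to\cc F/\cc A\to0$; right exactness yields an exact sequence $\cc A\otimes_X\cc G\to\cc F\otimes_X\cc G\to(\cc F/\cc A)\otimes_X\cc G\to0$, so $\cc F\otimes_X\cc G$ has a subobject which is a quotient of $\cc A\otimes_X\cc G$ --- hence lies in $\cc S$, as $\cc S$ is tensor and Serre --- with corresponding quotient $(\cc F/\cc A)\otimes_X\cc G$, which lies in $\cc Q$ since $\cc Q$ is tensor. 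Therefore $\cc F\otimes_X\cc G\in\cc X$.

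With all four closure properties in hand, Lemma~\ref{ppp} applies to $\cc X$ and gives that $\cc T=\surd\cc X$ is a tensor localizing subcategory, which is the assertion. The only place requiring care is the choice of $\cc X$: the seemingly more obvious class $\langle(\cc S\cup\cc Q)^\ps\rangle$ of subquotients of direct sums of objects of $\cc S\cup\cc Q$ also has $\surd(-)=\cc T$, but it need not be a tensor ideal (tensoring a subobject of a direct sum of $\cc S$- and $\cc Q$-objects by an arbitrary sheaf introduces a $\Tor$-term, so the result need not again be a subquotient of a direct sum of objects of $\cc S\cup\cc Q$), whereas the extension class above is a tensor ideal, which is exactly why Lemma~\ref{ppp} can be applied to it directly.
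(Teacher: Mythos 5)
Your proof is correct, and it takes a genuinely different route from the paper's. The paper's argument invokes the Reduction Principle: it notes that the lemma is trivial for affine $X$ (where every localizing subcategory is tensor), and for $X = U \cup V$ it transports the problem along the restriction maps $\alpha_{X,U}$, $\alpha_{X,V}$ to $\Qcoh(U)$, $\Qcoh(V)$, uses the compatibility $(\cc F|_U)\otimes_U(\cc G|_U)\cong(\cc F\otimes_X\cc G)|_U$ together with Lemma~\ref{ppp} to see that the local joins are tensor, and then concludes from the local-to-global structure built up in Lemma~\ref{pull} and Corollary~\ref{pullf}. Your argument, by contrast, is purely intrinsic: you manufacture a single subcategory $\cc X$ (the class of $\cc S$-by-$\cc Q$ extensions) which sandwiches $\cc S\cup\cc Q$ inside $\cc T$ and is visibly closed under subobjects, quotients, direct sums, and $-\otimes_X\cc G$, then apply Lemma~\ref{ppp} once, globally. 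The key observation making this work --- that right-exactness of $\otimes_X$ is exactly compatible with extensions, so that the image of $\cc A\otimes\cc G\to\cc F\otimes\cc G$ serves as a new $\cc S$-witness --- is well chosen, and your aside about why $\langle(\cc S\cup\cc Q)^\ps\rangle$ would not work (non-left-exactness of $\otimes$ makes subobjects problematic) is accurate. Your approach is more elementary and uses none of the geometry: it would work verbatim in any Grothendieck category equipped with a right-exact, colimit-preserving symmetric tensor, so it is actually more general than the paper's scheme-bound argument. The trade-off is that the paper's Reduction Principle proof fits a uniform template the author reuses for several other lemmas in the same section, so it costs less marginal effort in context.
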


\begin{proof}
We use the Reduction Principle to demonstrate the lemma. It is true
for affine schemes, because every localizing subcategory is tensor
in this case. Suppose $X=U\cup V$, where $U,V$ are quasi-compact
open subsets of $X$, and the assertion is true for $U,V,U\cap V$. We
have to show that it is true for $X$ itself.

We have the following relation:
   $$\alpha_{X,U}(\cc T)=(\surd(\wh{\cc S}|_U))\vee(\surd(\wh{\cc Q}|_U)).$$
Given $\cc F,\cc G\in\Qcoh(X)$ there is a canonical isomorphism
(see~\cite[II.2.3.5]{KS})
   $$(\cc F|_U)\otimes_U(\cc G|_U)\cong(\cc F\otimes_X\cc G)|_U.$$
It follows that both $\wh{\cc S}|_U$ and $\wh{\cc Q}|_U$ are closed
under tensor products. By Lemma~\ref{ppp} both $\surd\wh{\cc S}|_U$
and $\surd\wh{\cc Q}|_U$ are tensor. By assumption, the join of two
tensor localizing subcategories in $\Qcoh(U)$ is tensor, and so
$\alpha_{X,U}(\cc T)$ is tensor. For the same reasons,
$\alpha_{X,V}(\cc T)$ is tensor. Obviously, $\cc T$ is tensor
whenever so are $\alpha_{X,U}(\cc T)$ and $\alpha_{X,V}(\cc T)$.
Therefore $\cc T$ is tensor as well and our assertion now follows
from the Reduction Principle.
\end{proof}

Given a tensor localizing subcategory of finite type $\cc S$ in
$\Qcoh(X)$, we denote by
   $$\cc O(\cc S)=\{\cc E\in\inj(X)\mid t_{\cc S}(\cc E)\neq 0\}.$$

\begin{thm}\label{tenfg}%\marginpar{tenfg}
The collection of subsets of the injective spectrum $\inj(X)$,
   $$\{\cc O(\cc S)\mid\cc S\subset\cc C \textrm{ is a tensor localizing subcategory of finite type}\},$$
satisfies the axioms for the open sets of a topology on $\inj(X)$.
This topological space will be denoted by $\inj_{fl,\otimes}(X)$ and
this topology will be referred to as the {\em tensor fl-topology}.
Moreover, the map
   \begin{equation}\label{1111}
    \cc S\longmapsto\cc O(\cc S)
   \end{equation}
is an inclusion-preserving bijection between the tensor localizing
subcategories $\cc S$ of finite type in $\Qcoh(X)$ and the open
subsets of $\inj_{fl,\otimes}(X)$.
\end{thm}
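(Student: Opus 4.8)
The plan is to mimic the proof of Theorem~\ref{zg}, checking the three topology axioms for the collection $\{\cc O(\cc S)\}$ where $\cc S$ ranges over tensor localizing subcategories of finite type, and then deriving the bijection from the bijection already established in Theorem~\ref{zg}. Note first that $\cc O(\cc S)=O(\cc S)=(\cc S)$ as subsets of $\inj(X)$, since being tensor does not change the set $\{\cc E\mid t_{\cc S}(\cc E)\neq 0\}$; the point is merely that we are restricting the indexing family of localizing subcategories. The empty set is $\cc O(0)$ and $\inj(X)=\cc O(\Qcoh(X))$, and both $0$ and $\Qcoh(X)$ are tensor, so these two trivial cases are immediate.

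For finite intersections, I would take two tensor localizing subcategories of finite type $\cc S_1,\cc S_2$ and show $\cc S_1\cap\cc S_2$ is again tensor and of finite type; then $\cc O(\cc S_1)\cap\cc O(\cc S_2)=\cc O(\cc S_1\cap\cc S_2)$ follows from Proposition~\ref{tp} exactly as in Theorem~\ref{zg}. That $\cc S_1\cap\cc S_2$ is of finite type is shown verbatim as in the proof of Theorem~\ref{zg} (using \cite[5.8]{G} and the sum $X_1+X_2$ of finitely generated subobjects of a kernel). That it is tensor is trivial: if $\cc F\in\cc S_1\cap\cc S_2$ and $\cc G\in\Qcoh(X)$ then $\cc F\otimes_X\cc G$ lies in each $\cc S_i$ by hypothesis, hence in the intersection. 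For arbitrary unions, given a family $\{\cc S_i\}_{i\in I}$ of tensor localizing subcategories of finite type, their join $\cc T=\surd(\bigcup_{i\in I}\cc S_i)$ is of finite type by Lemma~\ref{ggg}, and it is tensor: the subcategory $\bigcup_{i\in I}\cc S_i$ is closed under tensoring with objects of $\Qcoh(X)$, hence so is its closure under subobjects, quotients and direct sums, and then Lemma~\ref{ppp} applies; alternatively one invokes Lemma~\ref{qqq} for the finite case and then passes to the directed colimit as in Lemma~\ref{ggg}. Then $\bigcup_{i\in I}\cc O(\cc S_i)=\cc O(\cc T)$ again by Proposition~\ref{tp}.

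For the bijection statement: the map $\cc S\mapsto\cc O(\cc S)$ is the restriction of the bijection $\cc S\mapsto O(\cc S)$ of Theorem~\ref{zg} to the subclass of tensor localizing subcategories of finite type, so it is automatically injective and inclusion-preserving. Surjectivity onto the open sets of $\inj_{fl,\otimes}(X)$ holds by the very definition of that topology. So the only content is well-definedness of the topology, i.e.\ the three axiom checks above, together with the observations that $\cc S_1\cap\cc S_2$ and $\surd(\bigcup\cc S_i)$ stay within the class --- which is precisely what Lemmas~\ref{ggg}, \ref{qqq}, \ref{ppp} were set up to provide.

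I do not expect a serious obstacle here; the proof is essentially assembly of the preceding lemmas. The only mildly delicate point is making sure that the closure operations in Lemma~\ref{ppp} genuinely apply --- that $\langle(\bigcup_{i\in I}\cc S_i)^\ps\rangle$ (closure under direct sums, subobjects, quotient objects of the union) is closed under $\otimes_X$ with arbitrary quasi-coherent sheaves, not merely finitely presented ones. Since $\otimes_X$ is right exact and commutes with direct limits and each $\cc S_i$ is already tensor, this closure is straightforward: a subquotient of $\bigoplus\cc F_j$ tensored with $\cc G$ is a subquotient of $\bigoplus(\cc F_j\otimes_X\cc G)$, and each summand lies in the relevant $\cc S_i$; one then feeds this into Lemma~\ref{ppp}. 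With that in hand the theorem follows.
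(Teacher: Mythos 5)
Your overall structure matches the paper's proof: finite intersections via $\cc S_1\cap\cc S_2$ being tensor and of finite type, arbitrary unions via the join $\cc T=\surd(\cup_i\cc S_i)$, and the bijection inherited from Theorem~\ref{zg}. The intersection step and the bijection step are fine. However, the ``mildly delicate point'' you identify at the end is handled incorrectly, and it is the genuine content of the theorem.

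Your claim that a subquotient of $\bigoplus\cc F_j$ tensored with $\cc G$ is a subquotient of $\bigoplus(\cc F_j\otimes_X\cc G)$ is false, precisely because $\otimes_X$ is only right exact. If $B_1\subseteq B$ then $B_1\otimes\cc G\to B\otimes\cc G$ need not be injective, so ``tensor of a subobject'' is not ``subobject of the tensor.'' (A concrete failure: $\bb Z\subseteq\bb Q$ and $\cc G=\bb Z/2$ gives $\bb Z\otimes\bb Z/2=\bb Z/2$, which is not a subquotient of $\bb Q\otimes\bb Z/2=0$.) Consequently $\langle(\cup_i\cc S_i)^\ps\rangle$ is not obviously closed under tensoring, and Lemma~\ref{ppp} does not apply directly to that closure. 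This is exactly why the paper does \emph{not} use this category. Instead, the paper's proof first reduces to the case where $I$ is directed with $\cc S_i\subset\cc S_j$ for $i\leq j$ (replacing each finite subset $J$ by the single join $\cc S_J$, which is tensor by Lemma~\ref{qqq}), and then works with the category $\cc X$ of all \emph{directed sums} $\sum_\alpha F_\alpha$ with $F_\alpha\in\cup_i\cc S_i$, taken from the proof of Lemma~\ref{ggg}. For such $\cc F=\sum_\alpha F_\alpha$ one has $\cc F\otimes\cc G=\lp_\alpha(F_\alpha\otimes\cc G)$, and the images $G_\alpha=\im(F_\alpha\otimes\cc G\to\cc F\otimes\cc G)$ form a directed family of subobjects with $G_\alpha\in\cc S_{i_\alpha}$ (as a quotient of $F_\alpha\otimes\cc G$, using that $\cc S_{i_\alpha}$ is tensor and Serre), so $\cc F\otimes\cc G\in\cc X$. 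That is how right-exactness is circumvented. Your sentence ``alternatively one invokes Lemma~\ref{qqq} for the finite case and then passes to the directed colimit as in Lemma~\ref{ggg}'' is the right idea, but as written it is a throwaway remark while your actual argument rests on the false subquotient claim; to make the proof sound you must make this alternative the primary argument and drop the subquotient reasoning entirely.
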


\begin{proof}
Obviously, the intersection $\cc S_1\cap\cc S_2$ of two tensor
localizing subcategories of finite type is a tensor localizing
subcategory of finite type, hence $\cc O(\cc S_1\cap\cc S_2)=\cc
O(\cc S_1)\cap\cc O(\cc S_2)$ by Theorem~\ref{zg}.

Now let us show that the join $\cc T=\surd(\cup_{i\in I}\cc S_i)$ of
tensor localizing subcategories of finite type $\cc S_i$ is tensor.
By induction and Lemma~\ref{qqq} $\cc T$ is tensor whenever $I$ is
finite. Now we may assume, without loss of generality, that $I$ is a
directed index set and $\cc S_i\subset\cc S_j$ for any $i\le j$. By
the proof of Lemma~\ref{ggg} $\cc T=\surd\cc X$ with $\cc X$ the
full subcategory of $\Qcoh(X)$ of those objects which can be
presented as directed sums $\sum F_\alpha$ with each $F_\alpha$
belonging to $\cup_{i\in I}\cc S_i$. Then $\cc X$ is closed under
direct sums, subobjects and quotient objects. It is also closed
under tensor product, because $\otimes_X$ commutes with direct
limits. By Lemma~\ref{ppp} $\cc T$ is tensor and by Lemma~\ref{ggg}
$\cc T$ is of finite type.

Theorem~\ref{zg} implies that $\cc O(\surd(\cup_{i\in I}\cc
S_i))=\cup_{i\in I}\cc O(\cc S_i)$ and the map~\eqref{1111} is
bijective.
\end{proof}

We denote by $\lattfl(X)$ the lattice of tensor localizing
subcategories of finite type in $\Qcoh(X)$.

\begin{cor}\label{pullf}%\marginpar{pullf}
The commutative square of lattices
   $$\begin{CD}
      \lattfl(X)@>\alpha_{X,U}>>\lattfl(U)\\
      @V\alpha_{X,V}VV@VV{\alpha_{U,U\cap V}}V\\
      \lattfl(V)@>\alpha_{V,U\cap V}>>\lattfl(U\cap V)
     \end{CD}$$
is pullback.
\end{cor}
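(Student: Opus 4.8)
The plan is to mimic the proof of Lemma \ref{pull}: replace each lattice of tensor localizing subcategories of finite type by its associated lattice of open sets and reduce to a pullback statement about topological spaces. By Theorem \ref{tenfg} the map $\cc S\mapsto\cc O(\cc S)$ is an inclusion-preserving bijection between $\lattfl(X)$ and the open sets of $\inj_{fl,\otimes}(X)$, and likewise for $U$, $V$, $U\cap V$; moreover this bijection is a lattice isomorphism because, by the proof of Theorem \ref{tenfg}, $\cc O(-)$ turns $\wedge$ into $\cap$ and $\vee$ into $\cup$. So the first step is to check that the maps $\alpha_{X,U}$, $\alpha_{X,V}$, $\alpha_{U,U\cap V}$, $\alpha_{V,U\cap V}$ are intertwined, under these isomorphisms, with the restriction maps $\lato(\inj_{fl,\otimes}(X))\to\lato(\inj_{fl,\otimes}(U))$ etc. (which exist because, by the tensor analogue of Corollary \ref{zhg}, $\inj_{fl,\otimes}(U)$ is an open subspace of $\inj_{fl,\otimes}(X)$, and similarly for $V$ and $U\cap V$). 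The identification of $\alpha_{X,U}$ with the restriction map is exactly as in the proof of Lemma \ref{pull}, since on underlying localizing subcategories $\alpha_{X,U}$ is the same map, and one just needs that it preserves the "tensor" condition — which was already recorded in the discussion preceding Lemma \ref{qqq} via the isomorphism $(\cc F|_U)\otimes_U(\cc G|_U)\cong(\cc F\otimes_X\cc G)|_U$, so $\alpha_{X,U}$ does restrict to a map $\lattfl(X)\to\lattfl(U)$.

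The second step is then to prove that the square
   $$\begin{CD}
      \lato(\inj_{fl,\otimes}(X))@>>>\lato(\inj_{fl,\otimes}(U))\\
      @VVV@VVV\\
      \lato(\inj_{fl,\otimes}(V))@>>>\lato(\inj_{fl,\otimes}(U\cap V))
     \end{CD}$$
of restriction maps is a pullback. This is a purely point-set statement: since $\inj_{fl,\otimes}(X)=\inj_{fl,\otimes}(U)\cup\inj_{fl,\otimes}(V)$ and $\inj_{fl,\otimes}(U)\cap\inj_{fl,\otimes}(V)=\inj_{fl,\otimes}(U\cap V)$ with $\inj_{fl,\otimes}(U)$, $\inj_{fl,\otimes}(V)$ open in $\inj_{fl,\otimes}(X)$, a pair of open sets $O_U\subseteq\inj_{fl,\otimes}(U)$ and $O_V\subseteq\inj_{fl,\otimes}(V)$ agreeing on the overlap glues uniquely to an open set $O_U\cup O_V$ of $\inj_{fl,\otimes}(X)$ restricting back to each. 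So the key input that must be established first is the tensor refinement of Corollary \ref{zhg}, namely
   $$\inj_{fl,\otimes}(X)=\inj_{fl,\otimes}(U)\cup\inj_{fl,\otimes}(V),\quad\inj_{fl,\otimes}(U\cap V)=\inj_{fl,\otimes}(U)\cap\inj_{fl,\otimes}(V)$$
as topological spaces, with the two subspaces open. For the underlying sets this is immediate from Corollary \ref{zhg}; that the tensor fl-topology on $\inj_{fl,\otimes}(U)$ is the subspace topology inherited from $\inj_{fl,\otimes}(X)$ follows from Proposition \ref{lenn} together with the observation that under the localization $(j^*,j_*)$ a tensor localizing subcategory of $\Qcoh(U)$ corresponds to a tensor localizing subcategory of $\Qcoh(X)$ and conversely (again by the projection-formula-type isomorphism $(\cc F|_U)\otimes_U(\cc G|_U)\cong(\cc F\otimes_X\cc G)|_U$, and using that $j_*$ is a monoidal-module functor in the relevant sense).

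I expect the main obstacle to be this last point: verifying that the tensor condition is preserved in both directions under the localization functors $\alpha_{X,U}$ and its section, so that $\inj_{fl,\otimes}(U)\hookrightarrow\inj_{fl,\otimes}(X)$ really is an open immersion of spaces and not merely a bijection of underlying open lattices. Once that compatibility is in hand, everything reduces — exactly as in Lemma \ref{pull} — to the trivial gluing property of open sets along an open cover $X=U\cup V$, and the commutativity of the square is the already-noted identity $\alpha_{X,U\cap V}=\alpha_{U,U\cap V}\circ\alpha_{X,U}=\alpha_{V,U\cap V}\circ\alpha_{X,V}$ restricted to $\lattfl$.
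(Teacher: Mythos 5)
Your proposal takes the same route as the paper, whose entire proof of Corollary \ref{pullf} is the remark that it is ``similar to that of Lemma~\ref{pull}''; you correctly identify and fill the one ingredient left implicit, namely the tensor analogue of Corollary~\ref{zhg} (and Proposition~\ref{lenn}) needed so that $\inj_{fl,\otimes}(U)$ and $\inj_{fl,\otimes}(V)$ form an open cover of $\inj_{fl,\otimes}(X)$ with $\inj_{fl,\otimes}(U\cap V)$ as overlap. Your sketch of that compatibility is sound, and is slightly overcautious: the ``converse'' direction only needs that the preimage assignment $\cc S\in\lattfl(U)\mapsto\{\cc F\in\Qcoh(X)\mid\cc F|_U\in\cc S\}$ lands in $\lattfl(X)$, which follows from the monoidality of $j^*=(-)|_U$ alone, so the appeal to $j_*$ being a monoidal-module functor (a projection-formula-type fact) is not actually required.
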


\begin{proof}
The proof is similar to that of Lemma~\ref{pull}.
\end{proof}

\section{The classification theorem}

Recall from~\cite{Hoc} that a topological space is {\it spectral\/}
if it is $T_0$, quasi-compact, if the quasi-compact open subsets are
closed under finite intersections and form an open basis, and if
every non-empty irreducible closed subset has a generic point. Given
a spectral topological space, $X$, Hochster~\cite{Hoc} endows the
underlying set with a new, ``dual", topology, denoted $X^*$, by
taking as open sets those of the form $Y=\bigcup_{i\in\Omega}Y_i$
where $Y_i$ has quasi-compact open complement $X\setminus Y_i$ for
all $i\in\Omega$. Then $X^*$ is spectral and $(X^*)^*=X$ (see
\cite[Prop.~8]{Hoc}).

As an example, the underlying topological space of a quasi-compact,
quasi-separated scheme $X$ is spectral. In this section we shall
show that the tensor localizing subcategories of finite type in
$\Qcoh(X)$ are in 1-1 correspondence with the open subsets of $X^*$.
If otherwise specified, $X$ is supposed to be a quasi-compact,
quasi-separated scheme.

Given a quasi-compact open subset $D\subset X$, we denote by $\cc
S_D=\{\cc F\in\Qcoh(X)\mid\cc F|_D=0\}$.

\begin{prop}\label{qq}%\marginpar{qq}
Given an open subset $O=\cup_IO_i\subset X^*$, where each
$D_i=X\setminus O_i$ is quasi-compact and open in $X$, the
subcategory $\cc S=\{\cc F\in\Qcoh(X)\mid\supp_X(\cc F)\subseteq
O\}$ is a tensor localizing subcategory of finite type and $\cc
S=\surd(\cup_I\cc S_{D_i})$.
\end{prop}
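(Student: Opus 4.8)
The plan is to establish the identity $\cc S=\surd(\cup_I\cc S_{D_i})$ first, and then deduce all the remaining assertions (tensor, finite type) from results already proved. The key observation is that for a single quasi-compact open $D\subset X$, Proposition~\ref{loc} gives $\cc S_D=\{\cc F\mid\supp_X(\cc F)\subseteq X\setminus D\}$, and since $X\setminus D=X\setminus D_i=O_i$ in our notation, this reads $\cc S_{D_i}=\{\cc F\mid\supp_X(\cc F)\subseteq O_i\}$. So the claim is really that the localizing subcategory ``supported in $\bigcup_I O_i$'' is generated by the localizing subcategories ``supported in $O_i$''.

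First I would prove the inclusion $\surd(\cup_I\cc S_{D_i})\subseteq\cc S$. Each $\cc S_{D_i}\subseteq\cc S$ since $O_i\subseteq O$, and $\cc S$ is itself a localizing subcategory (it is closed under subobjects, quotients, extensions and direct sums because support behaves correctly under these, using that $\supp_X$ commutes with direct limits for quasi-coherent sheaves), so it contains the smallest localizing subcategory containing all the $\cc S_{D_i}$. For the reverse inclusion $\cc S\subseteq\surd(\cup_I\cc S_{D_i})$, I would take $\cc F\in\cc S$ and work with its finitely generated (or finitely presented) subobjects; by Proposition~\ref{qcoh} every finitely generated $\cc G\subseteq\cc F$ is quasi-compactly supported, so $\supp_X(\cc G)$ is a closed subset contained in $O=\bigcup_I O_i$ whose complement is quasi-compact open — hence by quasi-compactness of that complement, $\supp_X(\cc G)$ is already contained in a finite union $O_{i_1}\cup\cdots\cup O_{i_n}$. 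By Lemma~\ref{zeg} (or rather its analogue for the $D$-notation, via Lemma~\ref{zeg} applied to $D_{i_1},\dots,D_{i_n}$), $\cc S_{D_{i_1}\cap\cdots\cap D_{i_n}}=\surd(\cc S_{D_{i_1}}\cup\cdots\cup\cc S_{D_{i_n}})$, and $D_{i_1}\cap\cdots\cap D_{i_n}=X\setminus(O_{i_1}\cup\cdots\cup O_{i_n})$, so $\cc G$ lies in this join, hence in $\surd(\cup_I\cc S_{D_i})$. Since $\cc F$ is the direct union of such $\cc G$, and $\surd(\cup_I\cc S_{D_i})$ is closed under direct unions, $\cc F\in\surd(\cup_I\cc S_{D_i})$.

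Having established $\cc S=\surd(\cup_I\cc S_{D_i})$, the remaining properties follow formally. Each $\cc S_{D_i}$ is of strictly finite type (noted right before Lemma~\ref{zeg}, since restriction preserves finitely presented objects), hence of finite type; by Lemma~\ref{ggg} the join $\surd(\cup_I\cc S_{D_i})$ is of finite type. For the tensor property: each $\cc S_{D_i}$ is tensor, since $\cc F|_{D_i}=0$ implies $(\cc F\otimes_X\cc G)|_{D_i}\cong(\cc F|_{D_i})\otimes_{D_i}(\cc G|_{D_i})=0$ using the restriction-tensor compatibility quoted in the proof of Lemma~\ref{qqq}; then Theorem~\ref{tenfg} (whose proof shows precisely that an arbitrary join of tensor localizing subcategories of finite type is again tensor of finite type) gives that $\cc S$ is a tensor localizing subcategory of finite type.

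I expect the main obstacle to be the finite-refinement step: turning ``$\supp_X(\cc G)\subseteq\bigcup_I O_i$'' into ``$\supp_X(\cc G)\subseteq O_{i_1}\cup\cdots\cup O_{i_n}$'' for finitely presented $\cc G$. This is where quasi-compactness of the complements $D_i=X\setminus O_i$ and of the support of a finitely generated quasi-coherent sheaf genuinely enters — one needs that $\supp_X(\cc G)$ is a closed quasi-compact subset of the spectral space $X$ contained in the union of the opens $O_i$, and that such a set is contained in a finite subunion. Everything else is bookkeeping with the lattice maps and appeals to Lemma~\ref{zeg}, Lemma~\ref{ggg}, and Theorem~\ref{tenfg}.
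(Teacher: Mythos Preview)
Your argument has a genuine gap at precisely the step you flagged as the main obstacle, and it is not a bookkeeping issue. You assert that for a finitely generated subobject $\cc G\subseteq\cc F$ the set $\supp_X(\cc G)$ is closed with quasi-compact open complement in $X$ (equivalently, quasi-compact open in $X^*$), and you cite Proposition~\ref{qcoh} for this. Proposition~\ref{qcoh} says nothing of the sort, and the claim is false for finitely generated sheaves on a non-noetherian scheme. Already affinely: over $R=k[x_1,x_2,\dots]$ the cyclic module $M=R/(x_1,x_2,\dots)$ has $\supp M=\{(x_1,x_2,\dots)\}$, whose complement $\bigcup_i D(x_i)$ is not quasi-compact. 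So the finite-refinement step ``$\supp_X(\cc G)\subseteq O_{i_1}\cup\cdots\cup O_{i_n}$'' does not follow.

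The claim \emph{is} true for finitely presented $\cc G$ (via Fitting ideals, which are finitely generated), but switching to finitely presented objects does not rescue the argument: an arbitrary $\cc F\in\cc S$ is the directed union of its finitely generated subobjects, not of finitely presented ones, and a finitely presented object surjecting onto a finitely generated $\cc G\subseteq\cc F$ need not itself have support contained in $O$. Concretely, you would need to show that every $\cc F$ with $\supp_X(\cc F)\subseteq O$ lies in $\surd\{\cc G\in\fp(\Qcoh X)\mid\supp_X(\cc G)\subseteq O\}$, which is tantamount to knowing that $\cc S$ is of finite type---the very thing to be proved. The circle does not close without further input.

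The paper avoids this entirely by the Reduction Principle: the affine case is imported from~\cite[2.2]{GP2}, and the passage from $U$, $V$, $U\cap V$ to $X=U\cup V$ is handled by the pullback square of Corollary~\ref{pullf}, which lets one recognise $\cc S$ as the unique element of $\lattfl(X)$ restricting to $\cc S(O\cap U)$ and $\cc S(O\cap V)$. Your appeals to Lemma~\ref{ggg} and Theorem~\ref{tenfg} for the ``finite type'' and ``tensor'' conclusions are fine once $\cc S=\surd(\cup_I\cc S_{D_i})$ is established, but establishing that equality is the whole difficulty, and the direct global argument you sketch does not go through outside the noetherian case.
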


\begin{proof}
Given a short exact sequence in $\Qcoh(X)$
   $$\cc F'\rightarrowtail\cc F\twoheadrightarrow\cc F'',$$
one has $\supp_X(\cc F)=\supp_X(\cc F')\cup\supp_X(\cc F'')$. It
follows that $\cc S$ is a Serre subcategory. It is also closed under
direct sums, hence localizing, because $\supp_X(\ps_I\cc
F_i)=\cup_I\supp_X(\cc F_i)$.

We use the Reduction Principle to show that $\cc S$ is a tensor
localizing subcategory of finite type and $\cc S=\surd(\cup_I\cc
S_{D_i})$. It is the case for affine schemes (see~\cite[2.2]{GP2}).
Suppose $X=U\cup V$, where $U,V$ are quasi-compact open subsets of
$X$, and the assertion is true for $U,V,U\cap V$. We have to show
that it is true for $X$ itself.

For any $\cc F\in\Qcoh(X)$ we have
   $$\supp_X(\cc F)=\supp_U(\cc F|_U)\cup\supp_V(\cc F|_V).$$
Clearly, $O\cap U$ is open in $U^*$ and $\supp_U(\cc F|_U)\subseteq
O\cap U$ for any $\cc F\in\cc S$. We see that $\wh{\cc S}|_U=\{\cc
F|_U=\cc F_{\cc S_U}\mid\cc F\in\cc S\}$ is contained in $\cc
S(U)=\{\cc F\in\Qcoh(U)\mid\supp_U(\cc F)\subseteq O\cap U\}$. By
assumption, $\cc S(U)$ is a tensor localizing subcategory of finite
type in $\Qcoh(U)$ and $\cc S(U)=\surd(\cup_I\cc S_{D_i\cap U})$. We
have $\cc S(U)\supset\surd(\wh{\cc S}|_U)$. Similarly, $\cc
S(V)\supset\surd(\wh{\cc S}|_V)$ and $\cc S(V)=\surd(\cup_I\cc
S_{D_i\cap V})$.

Since
   \begin{gather*}
     \alpha_{U,U\cap V}(\cc S_{D_i\cap U})=\alpha_{V,U\cap V}(\cc
     S_{D_i\cap V})\bl{\textrm{Lem.~\ref{zeg}}}=\cc S_{D_i\cap U\cap V}\\
     \bl{\textrm{Prop.~\ref{loc}}}=\{\cc F\in\Qcoh(U\cap V)\mid\supp_{U\cap V}(\cc
     F)\subseteq O_i\cap U\cap V\},
   \end{gather*}
it follows that
   \begin{gather*}
    \alpha_{U,U\cap V}(\cc S(U))=\alpha_{V,U\cap V}(\cc
    S(V))=\\
    \cc S(U\cap V)=\{\cc F\in\Qcoh(U\cap V)\mid\supp_{U\cap V}(\cc F)\subseteq O\cap U\cap V\}=
    \surd(\cup_I\cc S_{D_i\cap U\cap V}).
   \end{gather*}
By Corollary~\ref{pullf} there is a unique tensor localizing
subcategory of finite type $\cc T\in\lattfl(X)$ such that
$\surd(\wh{\cc T}|_U)=\cc S(U)$, $\surd(\wh{\cc T}|_V)=\cc S(V)$ and
$\cc T=\surd(\cup_I\cc S_{D_i})$. By construction, $\supp_X(\cc
F)\subseteq O$ for all $\cc F\in\cc T$, and hence $\cc T\subseteq\cc
S$, $\surd(\wh{\cc T}|_U)\subseteq\surd(\wh{\cc S}|_U)$,
$\surd(\wh{\cc T}|_V)\subseteq\surd(\wh{\cc S}|_V)$. Therefore $\cc
S(V)=\surd(\wh{\cc S}|_V)$ and $\cc S(V)=\surd(\wh{\cc S}|_V)$. By
Corollary~\ref{pullf} we have $\cc S=\cc T$.
\end{proof}

Let $X=U\cup V$ with $U,V$ open, quasi-compact subsets. Then
$X^*=U^*\cup V^*$ and both $U^*$ and $V^*$ are closed subsets of
$X^*$. Let $Y\in\lato(X^*)$ then $Y=\cup_IY_i$ with each
$D_i:=X\setminus Y_i$ open, quasi-compact subset in $X$. Since each
$D_i\cap U$ is an open and quasi-compact subset in $U$, it follows
that $Y\cap U=\cup_I(Y_i\cap U)\in\lato(U^*)$. Then the map
   $$\beta_{X,U}:\lato(X^*)\to\lato(U^*),\quad Y\mapsto Y\cap U$$
is a lattice map. The lattice map
$\beta_{X,V}:\lato(X^*)\to\lato(V^*)$ is similarly defined.

\begin{lem}\label{pullo}%\marginpar{pullo}
The square
   $$\begin{CD}
      \lato(X^*)@>\beta_{X,U}>>\lato(U^*)\\
      @V\beta_{X,V}VV@VV{\beta_{U,U\cap V}}V\\
      \lato(V^*)@>\beta_{V,U\cap V}>>\lato((U\cap V)^*)
     \end{CD}$$
is commutative and pullback.
\end{lem}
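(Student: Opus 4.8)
The plan is to show that the square of lattices of open sets is a pullback by exhibiting an explicit inverse to the canonical comparison map into the pullback. Recall that $X^* = U^* \cup V^*$ with $U^*$, $V^*$ closed in $X^*$, and $U^* \cap V^* = (U\cap V)^*$ (this uses that the quasi-compact open subsets of $X$ restrict to quasi-compact open subsets of $U$, $V$, $U\cap V$ and that intersections of quasi-compact opens are quasi-compact, $X$ being quasi-separated). Commutativity is immediate: both composites send $Y$ to $Y \cap U \cap V$.

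For the pullback property, suppose we are given $Y_U \in \lato(U^*)$ and $Y_V \in \lato(V^*)$ with $\beta_{U,U\cap V}(Y_U) = \beta_{V,U\cap V}(Y_V)$, i.e. $Y_U \cap V = Y_V \cap U$ inside $(U\cap V)^*$. I would set $Y := Y_U \cup Y_V \subseteq X$ and claim $Y \in \lato(X^*)$ with $\beta_{X,U}(Y) = Y_U$ and $\beta_{X,V}(Y) = Y_V$, and that $Y$ is the unique such element. First I would check $Y$ is open in $X^*$: writing $Y_U = \bigcup_i (U \setminus C_i)$ with $C_i$ quasi-compact open in $U$ and $Y_V = \bigcup_j (V \setminus D_j)$ with $D_j$ quasi-compact open in $V$, one has $X \setminus Y = (X\setminus Y_U) \cap (X\setminus Y_V)$; since $U^*$ is closed in $X^*$, $U \setminus Y_U$ is the complement in $X^*$ of an open set intersected appropriately — more carefully, the complement of $Y_U$ in $X^*$ relative to $U^*$ being closed means $X \setminus Y_U$ is closed in $X^*$, and similarly for $V$, so $X\setminus Y$ is closed in $X^*$ and $Y$ is open. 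Then I would verify $Y \cap U = Y_U$: the inclusion $Y_U \subseteq Y \cap U$ is clear, and for the reverse, $Y \cap U = (Y_U \cap U) \cup (Y_V \cap U) = Y_U \cup (Y_V \cap U) = Y_U \cup (Y_U \cap V) = Y_U$, using the compatibility hypothesis $Y_V \cap U = Y_U \cap V$ and $Y_U \cap V \subseteq Y_U$. Symmetrically $Y \cap V = Y_V$.

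Uniqueness is the other half: if $Y'$ is any element of $\lato(X^*)$ with $Y' \cap U = Y_U$ and $Y' \cap V = Y_V$, then since $X = U \cup V$ we get $Y' = (Y'\cap U) \cup (Y'\cap V) = Y_U \cup Y_V = Y$. This establishes that the square has the universal property of a pullback in the category of lattices (or of sets, since the lattice structure is inherited).

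The main obstacle I anticipate is the verification that $Y = Y_U \cup Y_V$ is genuinely open in the dual topology $X^*$ — equivalently, that $X \setminus Y$ is quasi-compact-open-generated-closed in the original $X$, i.e. an intersection of quasi-compact opens. The cleanest route is to argue topologically: $U^*$ and $V^*$ are closed in $X^*$ with $X^* = U^* \cup V^*$, so a subset of $X^*$ is open iff its traces on $U^*$ and on $V^*$ are open there; $Y$ has trace $Y_U$ on $U^*$ and $Y_V$ on $V^*$ (by the computation above), both open, so $Y$ is open. This reduces the point to the already-noted facts $X^* = U^* \cup V^*$ and that $U^*, V^*$ are closed, which follow from quasi-compactness and quasi-separatedness of $X$ and were observed just before the lemma statement. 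Once that is in hand the rest is the routine set-theoretic bookkeeping sketched above.
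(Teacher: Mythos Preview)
Your proposal is correct and follows exactly the same approach as the paper: the paper's proof simply states that the maps $Y\mapsto(Y\cap U,Y\cap V)$ and $(Y_1,Y_2)\mapsto Y_1\cup Y_2$ are mutual inverses, and you have supplied the set-theoretic verification (including the openness of $Y_U\cup Y_V$ in $X^*$ via the closed cover $X^*=U^*\cup V^*$) that the paper leaves to the reader. Note only that your clean topological argument for openness relies on the identities $Y\cap U=Y_U$ and $Y\cap V=Y_V$, so in a final write-up you should place that purely set-theoretic computation \emph{before} the openness claim.
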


\begin{proof}
It is easy to see that the lattice maps
   $$Y\in\lato(X^*)\mapsto(Y\cap U,Y\cap V)\in\lato(U^*)\prod_{\lato((U\cap V)^*)}\lato(V^*)$$
and
   $$(Y_1,Y_2)\in\lato(U^*)\prod_{\lato((U\cap V)^*)}\lato(V^*)\mapsto Y_1\cup Y_2\in\lato(X^*)$$
are mutual inverses.
\end{proof}

\begin{lem}\label{ooo}%\marginpar{ooo}
Given a subcategory $\cc X$ in $\Qcoh(X)$, we have
   $$\bigcup_{\cc F\in\cc X}\supp_X(\cc F)=\bigcup_{\cc F\in\surd\cc X}\supp_X(\cc F).$$
\end{lem}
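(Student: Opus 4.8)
The plan is to prove both inclusions of the claimed equality $\bigcup_{\cc F\in\cc X}\supp_X(\cc F)=\bigcup_{\cc F\in\surd\cc X}\supp_X(\cc F)$. One inclusion is trivial since $\cc X\subseteq\surd\cc X$. For the reverse inclusion, I would use the structural description of $\surd\cc X$ provided by Proposition~\ref{surd}: every object $\cc F\in\surd\cc X$ admits a filtration $\cc F_0\subset\cc F_1\subset\cdots\subset\cc F_\beta\subset\cdots$ with $\cc F=\bigcup_\beta\cc F_\beta$, with $\cc F_\gamma=\bigcup_{\beta<\gamma}\cc F_\beta$ at limit ordinals, and with $\cc F_0$ and each quotient $\cc F_{\beta+1}/\cc F_\beta$ lying in $\langle\cc X^\ps\rangle$.

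The key observation is that support behaves well with respect to the three operations that generate $\langle\cc X^\ps\rangle$ and with respect to the filtration: namely, $\supp_X$ of a direct sum is the union of the supports, $\supp_X$ of a subobject or quotient object of $\cc G$ is contained in $\supp_X(\cc G)$, and for a short exact sequence $\cc F'\rightarrowtail\cc F\twoheadrightarrow\cc F''$ one has $\supp_X(\cc F)=\supp_X(\cc F')\cup\supp_X(\cc F'')$; finally $\supp_X(\bigcup_\beta\cc F_\beta)=\bigcup_\beta\supp_X(\cc F_\beta)$ since the stalk functor commutes with direct limits. Combining these, every object of $\langle\cc X^\ps\rangle$ has support contained in $\bigcup_{\cc F\in\cc X}\supp_X(\cc F)$.

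First I would record that $\supp_X(\cc F_0)\subseteq\bigcup_{\cc F\in\cc X}\supp_X(\cc F)$ since $\cc F_0\in\langle\cc X^\ps\rangle$, and similarly $\supp_X(\cc F_{\beta+1}/\cc F_\beta)\subseteq\bigcup_{\cc F\in\cc X}\supp_X(\cc F)$ for every $\beta$. Then by transfinite induction: at a successor stage, the short exact sequence $\cc F_\beta\rightarrowtail\cc F_{\beta+1}\twoheadrightarrow\cc F_{\beta+1}/\cc F_\beta$ gives $\supp_X(\cc F_{\beta+1})=\supp_X(\cc F_\beta)\cup\supp_X(\cc F_{\beta+1}/\cc F_\beta)$, which stays inside $\bigcup_{\cc F\in\cc X}\supp_X(\cc F)$ by the inductive hypothesis and the previous sentence; at a limit stage, $\supp_X(\cc F_\gamma)=\bigcup_{\beta<\gamma}\supp_X(\cc F_\beta)$ stays inside by the inductive hypothesis. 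Hence $\supp_X(\cc F)=\supp_X(\bigcup_\beta\cc F_\beta)=\bigcup_\beta\supp_X(\cc F_\beta)\subseteq\bigcup_{\cc G\in\cc X}\supp_X(\cc G)$, giving the reverse inclusion.

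I do not anticipate a serious obstacle here; the only point requiring minor care is confirming that $\supp_X$ commutes with the directed unions appearing in the filtration, which follows because taking stalks is exact and commutes with filtered colimits, so a point $P$ lies in $\supp_X(\bigcup_\beta\cc F_\beta)$ iff $(\bigcup_\beta\cc F_\beta)_P=\varinjlim_\beta(\cc F_\beta)_P\neq 0$ iff $(\cc F_\beta)_P\neq 0$ for some $\beta$.
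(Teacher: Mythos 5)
Your proof is correct and follows essentially the same route as the paper: reduce via Proposition~\ref{surd} to the transfinite filtration, then control $\supp_X$ under direct sums, subquotients, extensions, and directed unions. The paper's proof is merely a terser version of the same induction, noting up front that one may assume $\cc X=\langle\cc X^\ps\rangle$ and that $\supp_X(\sum_I\cc F_i)\subseteq\bigcup_I\supp_X(\cc F_i)$ before invoking Proposition~\ref{surd}.
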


\begin{proof}
Since $\supp_X(\ps_I\cc F_i)=\cup_I\supp_X(\cc F_i)$ and
$\supp_X(\cc F)=\supp_X(\cc F')\cup\supp_X(\cc F'')$ for any short
exact sequence $\cc F'\rightarrowtail\cc F\twoheadrightarrow\cc F''$
in $\Qcoh(X)$, we may assume that $\cc X$ is closed under
subobjects, quotient objects, and direct sums, i.e. $\cc
X=\langle\cc X^\ps\rangle$. If $\cc F=\sum_I F_i$ we also have
$\supp_X(\cc F)\subseteq\cup_I\supp_X(\cc F_i)$. Now our assertion
follows from Proposition~\ref{surd}.
\end{proof}

\begin{lem}\label{sss}%\marginpar{sss}
Given a tensor localizing subcategory of finite type $\cc
S\in\lattfl(X)$, the set
   $$Y=\bigcup_{\cc F\in\cc S}\supp_X(\cc F)$$
is open in $X^*$.
\end{lem}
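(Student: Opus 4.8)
The plan is to apply the Reduction Principle once more, exactly as in the proof of Proposition~\ref{qq} and Lemma~\ref{qqq}. The statement $P$ asserts that for every tensor localizing subcategory of finite type $\cc S$, the set $Y=\bigcup_{\cc F\in\cc S}\supp_X(\cc F)$ is open in $X^*$; equivalently, that $Y$ is of the form $\bigcup_{i\in\Omega}Y_i$ with each $X\setminus Y_i$ quasi-compact and open in $X$.

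First I would verify $P$ for affine schemes. Here $\Qcoh(X)=\Mod R$ for a commutative ring $R$, every localizing subcategory is automatically tensor, and the affine case of the Classification Theorem --- which is~\cite[2.2]{GP2} and was already invoked in the proof of Proposition~\ref{qq} --- gives exactly that $\bigcup_{\cc F\in\cc S}\supp_X(\cc F)$ is a union of complements of quasi-compact open sets, i.e. open in $X^*=(\spec R)^*$. Next, for the inductive step, suppose $X=U\cup V$ with $U,V$ quasi-compact open and $P$ holds for $U$, $V$, and $U\cap V$. Given $\cc S\in\lattfl(X)$, set $\cc S(U)=\surd(\wh{\cc S}|_U)\in\lattfl(U)$ (this is the image $\alpha_{X,U}(\cc S)$, which lies in $\lattfl(U)$ by the discussion preceding Lemma~\ref{pull}), and likewise $\cc S(V)$. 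The key identity is
   $$\supp_X(\cc F)=\supp_U(\cc F|_U)\cup\supp_V(\cc F|_V),$$
already used in Proposition~\ref{qq}, together with $\bigcup_{\cc F\in\cc S}\supp_U(\cc F|_U)=\bigcup_{\cc G\in\wh{\cc S}|_U}\supp_U(\cc G)=\bigcup_{\cc G\in\cc S(U)}\supp_U(\cc G)$, where the last equality is Lemma~\ref{ooo}. These combine to give
   $$Y=\Big(\bigcup_{\cc G\in\cc S(U)}\supp_U(\cc G)\Big)\cup\Big(\bigcup_{\cc H\in\cc S(V)}\supp_V(\cc H)\Big)=:Y_U\cup Y_V,$$
where $Y_U\subseteq U$ and $Y_V\subseteq V$.

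By the inductive hypothesis $Y_U$ is open in $U^*$ and $Y_V$ is open in $V^*$. Since $U$ and $V$ are quasi-compact open in $X$, the subspace $U^*$ is a closed subspace of $X^*$ and an open subset of $U^*$ has the form $W\cap U$ for $W$ open in $X^*$ --- more concretely, writing $Y_U=\bigcup_i(U\setminus E_i)$ with each $E_i$ quasi-compact open in $U$, each $E_i$ is quasi-compact open in $X$ (quasi-compactness of the immersion $U\hookrightarrow X$ is where quasi-separatedness is used), so $X\setminus E_i$ is a complement of a quasi-compact open set and $U\setminus E_i=(X\setminus E_i)\cap U$; one checks similarly for $Y_V$, and then $Y=Y_U\cup Y_V$ is visibly a union of sets of the form $X\setminus(\text{quasi-compact open})$, hence open in $X^*$. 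This verifies the inductive step, and the Reduction Principle gives the conclusion for all quasi-compact, quasi-separated $X$. The main obstacle is the bookkeeping in this last paragraph: making sure that ``open in $U^*$'' can be lifted to ``built from quasi-compact opens of $X$'' --- this rests on the fact that a quasi-compact open subset of $U$ is quasi-compact open in $X$, which in turn uses quasi-separatedness of $X$ so that the inclusion $U\hookrightarrow X$ is a quasi-compact map. Once that point is granted, everything else is the now-routine Mayer--Vietoris-style gluing already carried out in Proposition~\ref{qq} and Lemma~\ref{pull}.
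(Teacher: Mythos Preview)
Your argument is essentially the paper's: Reduction Principle, affine case by~\cite[2.2]{GP2}, and for $X=U\cup V$ pass to $\alpha_{X,U}(\cc S)$, $\alpha_{X,V}(\cc S)$, use Lemma~\ref{ooo} and the identity $\supp_X(\cc F)=\supp_U(\cc F|_U)\cup\supp_V(\cc F|_V)$ to obtain $Y\cap U$ open in $U^*$ and $Y\cap V$ open in $V^*$.

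The one point to tighten is the last paragraph. The paper finishes by invoking Lemma~\ref{pullo}: since $(Y\cap U,Y\cap V)$ is a compatible pair in $\lato(U^*)\prod_{\lato((U\cap V)^*)}\lato(V^*)$, their union lies in $\lato(X^*)$. Your direct argument is not quite right as written: from $U\setminus E_i=(X\setminus E_i)\cap U$ you cannot conclude that $Y$ is a union of the sets $X\setminus E_i$, because $X\setminus E_i$ may meet $V\setminus Y_V$ and hence fail to lie in $Y$. The fix (which is really the content of Lemma~\ref{pullo}) is to use the sets $X\setminus(E_i\cup F_j)$, where $Y_V=\bigcup_j(V\setminus F_j)$; each $E_i\cup F_j$ is quasi-compact open in $X$, these sets are contained in $Y$, and the compatibility $Y_U\cap V=Y_V\cap U$ ensures they cover $Y$. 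With that adjustment your proof matches the paper's.
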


\begin{proof}
We use the Reduction Principle to show that $Y\in\lato(X^*)$. It is
the case for affine schemes (see~\cite[2.2]{GP2}). Suppose $X=U\cup
V$, where $U,V$ are quasi-compact open subsets of $X$, and the
assertion is true for $U,V,U\cap V$. We have to show that it is true
for $X$ itself.

By Corollary~\ref{pullf} $\alpha_{X,U}(\cc S)=\surd(\wh{\cc
S}|_U)\in\lattfl(U)$ and $\alpha_{X,V}(\cc S)=\surd(\wh{\cc
S}|_V)\in\lattfl(V)$. By assumption,
   $$Y_1=\bigcup_{\cc F\in\surd\wh{\cc S}|_U}\supp_U(\cc F)\in\lato(U^*)$$
and
   $$Y_2=\bigcup_{\cc F\in\surd\wh{\cc S}|_V}\supp_V(\cc F)\in\lato(V^*).$$
By Lemma~\ref{ooo}
   $$Y_1=\bigcup_{\cc F\in\wh{\cc S}|_U}\supp_U(\cc F)=\bigcup_{\cc F\in\cc S}\supp_U(\cc F|_U)$$
and
   $$Y_2=\bigcup_{\cc F\in\wh{\cc S}|_V}\supp_V(\cc F)=\bigcup_{\cc F\in\cc S}\supp_V(\cc F|_V).$$
For every $\cc F\in\Qcoh(X)$ we have $\supp_X(\cc F)=\supp_U(\cc
F|_U)\cup\supp_V(\cc F|_V)$. Therefore $Y_1=Y\cap U$ and $Y_2=Y\cap
V$. By Lemma~\ref{pullo} $Y=Y_1\cup Y_2\in\lato(X^*)$.
\end{proof}

We are now in a position to prove the main result of the paper.

\begin{thm}[Classification; see Garkusha-Prest~\cite{GP2} for affine schemes]\label{class}%\marginpar{class}
Let $X$ be a quasi-compact, quasi-separated scheme. Then the maps
   $$Y\bl{\phi_X}\longmapsto\mathcal S(Y)=\{\cc F\in\Qcoh(X)\mid\supp_X(\cc F)\subseteq Y\}$$
and
   $$\mathcal S\bl{\psi_X}\longmapsto Y(\cc S)=\bigcup_{\cc F\in\mathcal S}\supp_X(\cc F)$$ induce
bijections between
\begin{enumerate}
 \item the set of all subsets of
the form $Y=\bigcup_{i\in\Omega}Y_i$ with quasi-compact open
complement $X\setminus Y_i$ for all $i\in\Omega$; that is, the set
of all open subsets of $X^*,$

\item the set of all tensor localizing subcategories of finite type in $\Qcoh(X)$.
\end{enumerate}
Moreover, $\cc S(Y)=\surd(\cup_{i\in I}\cc S(Y_i))=\surd(\cup_{i\in
I}\cc S_{D_i})$, where $D_i=X\setminus Y_i$, $\cc S_{D_i}=\{\cc
F\in\Qcoh(X)\mid\cc F|_{D_i}=0\}$.
\end{thm}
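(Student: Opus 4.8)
The plan is to show the two maps $\phi_X$ and $\psi_X$ are mutually inverse bijections between open subsets of $X^*$ and $\lattfl(X)$, and to do so by combining the preceding lemmas rather than by a fresh geometric argument. First I would check that $\phi_X$ and $\psi_X$ land where claimed: Proposition~\ref{qq} gives that for $Y=\bigcup_I Y_i\in\lato(X^*)$ the subcategory $\cc S(Y)$ is a tensor localizing subcategory of finite type, together with the asserted formula $\cc S(Y)=\surd(\cup_I\cc S(Y_i))=\surd(\cup_I\cc S_{D_i})$; conversely Lemma~\ref{sss} gives that for $\cc S\in\lattfl(X)$ the set $Y(\cc S)=\bigcup_{\cc F\in\cc S}\supp_X(\cc F)$ lies in $\lato(X^*)$. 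So both maps are well defined, and both are visibly inclusion-preserving.

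Next I would verify $\psi_X\circ\phi_X=\id$. For $Y\in\lato(X^*)$ one must see that $\bigcup_{\cc F\in\cc S(Y)}\supp_X(\cc F)=Y$. The inclusion ``$\subseteq$'' is immediate from the definition of $\cc S(Y)$. For ``$\supseteq$'' it suffices to produce, for each point $x\in Y$, a quasi-coherent sheaf $\cc F$ with $\supp_X(\cc F)\subseteq Y$ and $x\in\supp_X(\cc F)$; since $x\in Y_i$ for some $i$, and $Y_i$ is the complement of the quasi-compact open $D_i$, one can take (after passing to an affine open neighbourhood of $x$, which by Proposition~\ref{loc} embeds as a localization) a suitable skyscraper-type or cyclic sheaf supported in the closed set $X\setminus D_i\subseteq Y$ but nonzero at $x$. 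This is the one genuinely pointwise computation, and it is exactly the content already dispatched in the affine case \cite[2.2]{GP2}, so it can be invoked or reduced to that case.

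Then I would verify $\phi_X\circ\psi_X=\id$, i.e. $\cc S(Y(\cc S))=\cc S$ for every $\cc S\in\lattfl(X)$. The inclusion $\cc S\subseteq\cc S(Y(\cc S))$ is trivial. For the reverse, the natural approach is the Reduction Principle: the equality holds for affine schemes by \cite[2.2]{GP2}; and if $X=U\cup V$ with $U,V$ quasi-compact open and the statement holds for $U$, $V$, $U\cap V$, then one restricts along $\alpha_{X,U}$ and $\alpha_{X,V}$. By Lemma~\ref{sss} (and Lemma~\ref{ooo}) one has $Y(\cc S)\cap U=Y(\alpha_{X,U}(\cc S))$ and likewise for $V$, so $\alpha_{X,U}(\cc S(Y(\cc S)))=\cc S(Y(\cc S)\cap U)=\alpha_{X,U}(\cc S)$ by the inductive hypothesis on $U$, and similarly over $V$; since $\cc S(Y(\cc S))\supseteq\cc S$, Corollary~\ref{pullf} (the pullback property of the square of $\lattfl$'s) forces $\cc S(Y(\cc S))=\cc S$. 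I expect the main obstacle to be precisely the bookkeeping in this gluing step — checking that the restriction functors commute with the formation of $\cc S(-)$ and with taking supports, so that the two localizing subcategories one wants to identify really do have equal images in both $\lattfl(U)$ and $\lattfl(V)$ — since the pullback square then does the rest automatically. Finally, the displayed formula $\cc S(Y)=\surd(\cup_I\cc S(Y_i))=\surd(\cup_I\cc S_{D_i})$ is just a restatement of Proposition~\ref{qq}, and the identification of (1) with ``open subsets of $X^*$'' is the definition of the dual topology recalled from \cite{Hoc}.
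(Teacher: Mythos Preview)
Your approach is sound and essentially correct, but it differs from the paper's in how the two directions are organized. Two places in your sketch deserve a little more care. First, in the $\phi_X\psi_X=\id$ step you invoke $\alpha_{X,U}(\cc S(Y(\cc S)))=\cc S_U(Y(\cc S)\cap U)$ without comment; this is not the statement of Proposition~\ref{qq} but rather a by-product of its proof (equivalently, it follows from the formula $\cc S(Y)=\surd(\cup_i\cc S_{D_i})$ together with $\alpha_{X,U}(\cc S_{D_i})=\cc S_{D_i\cap U}$, which one reads off from the injective-spectrum description). Second, for $\psi_X\phi_X=\id$ you need a sheaf on $X$, not merely on an affine neighbourhood; the cleanest choice is the pushforward $(i_x)_*\kappa(x)$ along $i_x:\spec\kappa(x)\to X$, which is quasi-coherent (the map $i_x$ is affine) and has support $\overline{\{x\}}\subseteq Y_i$.

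The paper's own argument is more symmetric: it applies the Reduction Principle once to prove both $\phi_X\psi_X=1$ and $\psi_X\phi_X=1$ simultaneously. The device is a three-layer commutative diagram $\lato(X^*)\xrightarrow{\phi}\lattfl(X)\xrightarrow{\psi}\lato(X^*)$ over the cover $X=U\cup V$, each of whose horizontal faces is a pullback square --- Corollary~\ref{pullf} for the middle layer and Lemma~\ref{pullo} for the two outer $\lato$-layers. Since by induction the vertical maps over $U$, $V$, $U\cap V$ are mutual inverses, the pullback property forces the same over $X$. This is slicker and avoids constructing any sheaf by hand. Your route, by contrast, makes the $\psi_X\phi_X=\id$ direction entirely elementary and pointwise (indeed valid for an arbitrary subset $Y$, not just an open one in $X^*$), dispensing with Lemma~\ref{pullo} altogether, at the price of a slightly more ad hoc verification for the other direction.
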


\begin{proof}
By Proposition~\ref{qq} and Lemma~\ref{sss} $\phi_X(Y)\in\lattfl(X)$
and $\psi_X(\cc S)\in\lato(X^*)$. We have lattice maps
   $$\phi_X:\lato(X^*)\to\lattfl(X),\quad\psi_X:\lattfl(X)\to\lato(X^*).$$
We use the Reduction Principle to show that $\phi_X\psi_X=1$ and
$\psi_X\phi_X=1$. It is the case for affine schemes
(see~\cite[2.2]{GP2}). Suppose $X=U\cup V$, where $U,V$ are
quasi-compact open subsets of $X$, and the assertion is true for
$U,V,U\cap V$. We have to show that it is true for $X$ itself.

One has the following commutative diagram of lattices:
    $$\xymatrix@!0{
     &&\lato(V^*)\ar[rrrrrr]^{\beta_{V,U\cap V}}\ar'[d][dd] &&&&&& \lato((U\cap V)^*)\ar[dd]^{\phi_{U\cap V}}\\
     \lato(X^*)\ar[urr]\ar[rrrrrr]\ar[dd]_{\phi_X} &&&&&& \lato(U^*)\ar[urr]\ar[dd]\\
     &&\lattfl(V)\ar@{.>}'[r][rrrrrr]\ar'[d][dd]&&&&&& \lattfl(U\cap V)\ar[dd]^{\psi_{U\cap V}}\\
     \lattfl(X)\ar[rrrrrr]^{\alpha_{X,U}}\ar[urr]\ar[dd]_{\psi_X}&&&&&&\lattfl(U)\ar[urr]\ar[dd]\\
     &&\lato(V^*)\ar@{.>}'[r][rrrrrr]&&&&&& \lato((U\cap V)^*)\\
     \lato(X^*)\ar[rrrrrr]_{\beta_{X,U}}\ar[urr]&&&&&&\lato(U^*)\ar[urr] }$$
By assumption, all vertical arrows except $\phi_X,\psi_X$ are
bijections. Precisely, the maps $\phi_U,\psi_U$ (respectively
$\phi_V,\psi_V$ and $\phi_{U\cap V},\psi_{U\cap V}$) are mutual
inverses. Since each horizontal square is pullback (see
Corollary~\ref{pullf} and Lemma~\ref{pullo}), it follows that
$\phi_X,\psi_X$ are mutual inverses.

The fact that $\cc S(Y)=\surd(\cup_{i\in I}\cc
S(Y_i))=\surd(\cup_{i\in I}\cc S|_{D_i})$ is a consequence of
Propositions~\ref{loc} and~\ref{qq}. The theorem is proved.
\end{proof}

Denote by $\perf(X)$ the derived category of perfect complexes, the
homotopy category of those complexes of sheaves of $\cc O_X$-modules
which are locally quasi-isomorphic to a bounded complex of free $\cc
O_X$-modules of finite type. We say a thick triangulated subcategory
$\cc A\subset\perf(X)$ is a {\it tensor subcategory\/} if for each
object $E$ in $\perf(X)$ and each $A$ in $\cc A$, the derived tensor
product $E\otimes^L_X A$ is also in $\cc A$.

Let $E$ be a complex of sheaves of $\cc O_X$-modules. The {\it
cohomological support\/} of $E$ is the subspace
$\supph_X(E)\subseteq X$ of those points $x\in X$ at which the stalk
complex of $\cc O_{X,x}$-modules $E_x$ is not acyclic. Thus
$\supph_X(E)=\bigcup_{n\in\bb Z}\supp_X(H_n(E))$ is the union of the
supports in the classic sense of the cohomology sheaves of $E$.

We shall write $L_{\thick}(\perf(X))$ to denote the lattice of all
thick subcategories of $\perf(X)$.

\begin{thm}[Thomason~\cite{T}]\label{nn}%\marginpar{nn}
Let $X$ be a quasi-compact and quasi-separated scheme. The
assignments
   $$\cc T\in L_{\thick}(\perf(X))\bl\mu\longmapsto Y(\cc T)=\bigcup_{E\in\cc T}\supph_X(E)$$
and
   $$Y\in\lato(X^*)\bl\nu\longmapsto
     \cc T(Y)=\{E\in\perf(X)\mid\supph_X(E)\subseteq Y\}$$
are mutually inverse lattice isomorphisms.
\end{thm}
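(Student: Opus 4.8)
This is Thomason's theorem~\cite{T}; I would obtain it along the lines of Theorem~\ref{class}, by applying the Reduction Principle to the property $P(X)$: ``$\mu$ and $\nu$ are mutually inverse, inclusion-preserving bijections between $L_{\thick}(\perf(X))$ and $\lato(X^*)$.'' First I would check the two maps are well defined. The subcategory $\cc T(Y)=\{E\mid\supph_X(E)\subseteq Y\}$ is thick, since $\supph_X$ is stable under shifts, is monotone under passage to direct summands, and satisfies $\supph_X(E'')\subseteq\supph_X(E)\cup\supph_X(E')$ for every triangle $E'\to E\to E''$; it is moreover a tensor subcategory because $\supph_X(E\otimes^L_X F)\subseteq\supph_X(E)\cap\supph_X(F)$. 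In the other direction, for a perfect complex $E$ the complement of $\supph_X(E)$ is quasi-compact and open: locally $E$ is a bounded complex of free modules of finite rank, so on an affine open its support is $V(I)$ for a finitely generated ideal and its complement is a finite union of basic opens, and one concludes using quasi-compactness of $X$. Hence $Y(\cc T)=\bigcup_{E\in\cc T}\supph_X(E)$ lies in $\lato(X^*)$, and both maps are clearly inclusion-preserving.

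Next I would dispose of $\mu\nu=\id$, which needs no reduction: $Y(\cc T(Y))\subseteq Y$ is trivial, and given $y\in Y$ one has $y\in Y_i=X\setminus D_i$ for some $i$ with $D_i$ quasi-compact open, so Thomason's construction of a perfect complex $K_i$ with $\supph_X(K_i)=X\setminus D_i$ gives $K_i\in\cc T(Y)$ and $y\in\supph_X(K_i)\subseteq Y(\cc T(Y))$. The substantial point is $\nu\mu=\id$, i.e. that $\supph_X(E)\subseteq\bigcup_{F\in\cc T}\supph_X(F)$ forces $E\in\cc T$; here the Reduction Principle enters. For affine $X=\spec R$ this is the Hopkins--Neeman--Thomason theorem~\cite{Hop,N}: every thick subcategory of $\perf(R)$ is generated by the Koszul complexes it contains, and so is determined by its cohomological support. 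For the inductive step, with $X=U\cup V$, $U,V$ quasi-compact open, and $P$ assumed for $U$, $V$, $U\cap V$, I would use the identity $\supph_X(E)=\supph_U(E|_U)\cup\supph_V(E|_V)$ together with the pullback square of Lemma~\ref{pullo}, and run the same diagram chase as for Theorem~\ref{class} after establishing that the restriction functors make
   $$\begin{CD}
      L_{\thick}(\perf(X))@>>>L_{\thick}(\perf(U))\\
      @VVV@VVV\\
      L_{\thick}(\perf(V))@>>>L_{\thick}(\perf(U\cap V))
     \end{CD}$$
a pullback square. I would prove this from the Thomason--Trobaugh theory: the localization theorem realizes $\perf(U)$, up to idempotent completion, as the Verdier quotient of $\perf(X)$ by the complexes acyclic on $U$, and every perfect complex on $U$ extends, after adding a direct summand, to a perfect complex on $X$. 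These facts let one recover a thick subcategory $\cc T\subset\perf(X)$ as $\{E\mid E|_U\in\thick(\cc T|_U),\ E|_V\in\thick(\cc T|_V)\}$, and conversely glue any compatible pair of thick subcategories of $\perf(U)$ and $\perf(V)$ to one on $\perf(X)$, keeping track of supports throughout. Preservation of meets and joins is then automatic, since on both sides they are intersections and unions of supports.

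I expect the inductive step---the assertion that $L_{\thick}(\perf(X))$ is the pullback of the two restriction maps---to be the main obstacle: the affine case is a purely ring-theoretic statement, whereas the gluing requires the genuinely geometric input of Thomason--Trobaugh (extension and approximation of perfect complexes across a quasi-compact open immersion, and the localization theorem for $\perf$), together with careful bookkeeping of cohomological supports. An alternative, once the later sections are available, would be to compose the Classification Theorem~\ref{class} with the correspondence between tensor thick subcategories of $\perf(X)$ and tensor localizing subcategories of finite type in $\Qcoh(X)$; but since that correspondence is itself proved using the present theorem, the self-contained route above is the one I would carry out.
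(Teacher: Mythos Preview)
The paper does not give its own proof of this statement: Theorem~\ref{nn} is simply attributed to Thomason~\cite{T} and quoted as a known input, then used immediately to deduce Theorem~\ref{lll}. Your proposal therefore does considerably more than the paper does here---you outline an actual argument in the style of Theorem~\ref{class}, whereas the paper is content to cite the result.

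Your sketch is broadly reasonable and identifies the right pressure points: well-definedness of $\mu$ via quasi-compactness of the complement of $\supph_X(E)$, the existence of perfect complexes with prescribed closed support for $\mu\nu=\id$, and Thomason--Trobaugh localization/extension for the pullback square of thick subcategories in the inductive step. One caution: the affine base case you invoke is not covered by~\cite{Hop,N}, which treat only noetherian rings; the Reduction Principle requires the affine statement for arbitrary commutative rings, and that already lies inside Thomason's paper~\cite{T}. So unless you supply an independent argument for the affine non-noetherian case (via Koszul complexes on finite sequences, say), your base case is not genuinely more elementary than the theorem itself. Apart from this, your plan is a faithful reconstruction of how Thomason's proof is organized, and the parallel with Theorem~\ref{class} is apt---but, to repeat, none of this appears in the present paper.
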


The next result says that there is a 1-1 correspondence between the
tensor thick subcategories of perfect complexes and the tensor
localizing subcategories of finite type of quasi-coherent sheaves.

\begin{thm}[see Garkusha-Prest~\cite{GP2} for affine schemes]\label{lll}%\marginpar{lll}
Let $X$ be a quasi-compact and quasi-separated scheme. The
assignments
   $$\cc T\in L_{\thick}(\perf(X))\bl\rho\longmapsto\cc S=\{\cc F\in\Qcoh(X)\mid\supp_X(\cc F)\subseteq Y(\cc T)\}$$
and
   $$\cc S\in\lattfl(X)\bl\tau\longmapsto
     \{E\in\perf(X)\mid H_n(E)\in\cc S\textrm{ for all $n\in\bb Z$}\}$$
are mutually inverse lattice isomorphisms.
\end{thm}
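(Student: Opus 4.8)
The plan is to combine the two classification theorems already proved --- the Classification Theorem (Theorem~\ref{class}) for tensor localizing subcategories of finite type in $\Qcoh(X)$ and Thomason's theorem (Theorem~\ref{nn}) for thick subcategories of $\perf(X)$ --- and to note that both are parametrized by the same lattice $\lato(X^*)$ of open subsets of the Hochster dual $X^*$. First I would observe that the map $\mu$ in Theorem~\ref{nn} restricts to a bijection between the \emph{tensor} thick subcategories of $\perf(X)$ and $\lato(X^*)$: indeed Thomason's $\nu$ always produces tensor subcategories (if $\supph_X(E)\subseteq Y$ and $F\in\perf(X)$, then $\supph_X(E\otimes^L_X F)\subseteq\supph_X(E)\subseteq Y$, since the stalk of $E\otimes^L_X F$ at $x$ is $E_x\otimes^L_{\cc O_{X,x}}F_x$, which is acyclic whenever $E_x$ is), so every tensor thick subcategory lies in the image of $\nu$ and the restriction is still a bijection onto all of $\lato(X^*)$. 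Likewise the Classification Theorem already identifies $\lattfl(X)$ with $\lato(X^*)$ via $\psi_X$ and $\phi_X$.

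With these two identifications in hand, the composite $\rho=\phi_X\circ\mu$ and $\tau=\nu\circ\psi_X$ ought to be the maps in the statement. So the real content is to check that the explicit formulas for $\rho$ and $\tau$ given in the theorem agree with these composites. For $\rho$: given a tensor thick $\cc T$, $\mu(\cc T)=Y(\cc T)=\bigcup_{E\in\cc T}\supph_X(E)$, and $\phi_X$ of this is exactly $\{\cc F\in\Qcoh(X)\mid\supp_X(\cc F)\subseteq Y(\cc T)\}$ --- which is the stated formula, once one recalls $\supph_X(E)=\bigcup_n\supp_X(H_n(E))$. For $\tau$: given $\cc S\in\lattfl(X)$, one has $\psi_X(\cc S)=Y(\cc S)=\bigcup_{\cc F\in\cc S}\supp_X(\cc F)$, and I must check that $\nu(Y(\cc S))=\{E\in\perf(X)\mid\supph_X(E)\subseteq Y(\cc S)\}$ coincides with $\{E\in\perf(X)\mid H_n(E)\in\cc S\text{ for all }n\}$. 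This is where the description of $\cc S$ as $\cc S(Y(\cc S))$ from Theorem~\ref{class} enters: $H_n(E)\in\cc S=\cc S(Y(\cc S))$ iff $\supp_X(H_n(E))\subseteq Y(\cc S)$ for every $n$, which holds for all $n$ iff $\supph_X(E)=\bigcup_n\supp_X(H_n(E))\subseteq Y(\cc S)$. So the two descriptions of $\tau$ literally coincide.

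Finally I would assemble the pieces: $\rho$ and $\tau$ are each compositions of lattice isomorphisms (Theorems~\ref{class} and~\ref{nn}, suitably restricted to the tensor subcategories), hence lattice isomorphisms; and $\tau\circ\rho=\nu\circ\psi_X\circ\phi_X\circ\mu=\nu\circ\mu=\id$ and $\rho\circ\tau=\phi_X\circ\mu\circ\nu\circ\psi_X=\phi_X\circ\psi_X=\id$, using $\psi_X\circ\phi_X=\id$ and $\mu\circ\nu=\id$ from the two cited theorems. One small point to verify along the way is that $\rho(\cc T)$ really is of finite type and tensor --- but this is automatic since it equals $\phi_X(Y(\cc T))$ and Proposition~\ref{qq} (or Theorem~\ref{class}) guarantees $\phi_X$ lands in $\lattfl(X)$.

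I expect the main obstacle to be the verification that Thomason's bijection restricts correctly to the tensor subcategories on both sides --- i.e.\ that $\nu(Y)$ is always a tensor subcategory and that $\mu$ sends tensor thick subcategories onto all of $\lato(X^*)$ (equivalently, that a tensor thick subcategory is determined by its cohomological support exactly as a general thick subcategory is). Both follow quickly from the stalkwise computation of $\otimes^L_X$ and from Thomason's theorem, but they are the step that genuinely uses the tensor hypothesis rather than just formally composing known bijections; everything else is bookkeeping with the support functions.
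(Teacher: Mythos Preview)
Your proposal is correct and takes essentially the same approach as the paper: both proofs factor the maps $\rho$ and $\tau$ through the lattice $\lato(X^*)$ via the Classification Theorem~\ref{class} and Thomason's Theorem~\ref{nn}, verify that the explicit formulas agree with the composites (using $\supph_X(E)=\bigcup_n\supp_X(H_n(E))$), and conclude by composing the known bijections. The paper's argument is just the terse version of yours: it checks $\tau\phi=\nu$ and $\rho\nu=\phi$ (equivalent to your $\tau=\nu\psi_X$ and $\rho=\phi_X\mu$) and immediately reads off $\rho\tau=1$, $\tau\rho=1$; your additional paragraph about restricting Thomason's bijection to the \emph{tensor} thick subcategories is not needed here, since the statement as written uses the same $L_{\thick}(\perf(X))$ that already appears in Theorem~\ref{nn}.
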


\begin{proof}
Consider the following diagram
    $$\xymatrix{
      &\lato(X^*)\ar[dl]_{\phi}\ar[dr]^{\nu}\\
      \lattfl(X)\ar@<.6ex>[rr]^{\tau}&&L_{\thick}(\perf(X)),\ar@<.6ex>[ll]^{\rho}
      }$$
in which $\phi,\nu$ are the lattice maps described in
Theorems~\ref{class} and~\ref{nn}. Using the fact that
$\supph_X(E)=\bigcup_{n\in\bb Z}\supp_X(H_n(E))$, $E\in\perf(X)$,
and Theorems~\ref{class}, \ref{nn} one sees that $\tau\phi=\nu$ and
$\rho\nu=\phi$. Then $\rho\tau=\rho\nu\phi^{-1}=\phi\phi^{-1}=1$ and
$\tau\rho=\tau\phi\nu^{-1}=\nu\nu^{-1}=1$.
\end{proof}

\section{The Zariski topology on $\inj(X)$}

We are going to construct two maps
   $$\alpha:X\to\inj(X)\quad\textrm{and}\quad\beta:\inj(X)\to X.$$
Given $P\in X$ there is an affine neighborhood $U=\spec R$ of $P$.
Let $E_P$ denote the injective hull of the quotient module $R/P$.
Then $E_P$ is an indecomposable injective $R$-module. By
Proposition~\ref{loc} $\Rfp$ can be regarded as the quotient
category $\Qcoh(X)/\cc S_U$, where $\cc S_U=\kr j^*_U$ with
$j_U:U\to X$ the canonical injection. Therefore
$j_{U,*}:\Rfp\to\Qcoh(X)$ takes injectives to injectives. We set
$\alpha(P)=j_{U,*}(E_P)\in\inj(X)$.

The definition of $\alpha$ does not depend on choice of the affine
neighborhood $U$. Indeed, let $P\in V=\spec S$ with $S$ a
commutative ring. Then $j_{U,*}(E_P)\cong j_{V,*}(E_P)\cong j_{U\cap
V,*}(E_P)$, hence these represent the same element in $\inj(X)$. We
denote it by $\cc E_P$.

Now let us define the map $\beta$. Let $X=\cup_{i=1}^nU_i$ with each
$U_i=\spec R_i$ an affine scheme and let $\cc E\in\inj(X)$. Then
$\cc E$ has no $\cc S_{U_i}$-torsion for some $i\le n$, because
$\cap_{i=1}^n\cc S_{U_i}=0$ and $\cc E$ is uniform. Since $\Rfp_i$
is equivalent to $\Qcoh(X)/\cc S_{U_i}$, $\cc E$ can be regarded as
an indecomposable injective $R_i$-module. Set $P=P(\cc E)$ to be the
sum of annihilator ideals in $R_i$ of non-zero elements,
equivalently non-zero submodules, of $\cc E$. Since $\cc E$ is
uniform the set of annihilator ideals of non-zero elements of $\cc
E$ is closed under finite sum. It is easy to check (\cite[9.2]{Pr2})
that $P(\cc E)$ is a prime ideal. By construction, $P(\cc E)\in
U_i$. Clearly, the definition of $P(\cc E)$ does not depend on
choice of $U_i$ and $P(\cc E_P)=P$. We see that $\beta\alpha=1_X$.
In particular, $\alpha$ is an embedding of $X$ into $\inj(X)$. We
shall consider this embedding as identification.

Given a commutative coherent ring $R$ and an indecomposable
injective $R$-module $E\in\inj R$, Prest~\cite[9.6]{Pr2} observed
that $E$ is elementary equivalent to $E_{P(E)}$ in the first order
language of modules. Translating this fact from model-theoretic
idioms to algebraic language, it says that every localizing
subcategory of finite type $\cc S\in\latfl(\Rfp)$ is cogenerated by
prime ideals. More precisely, there is a set $D\subset\spec R$ such
that $S\in\cc S$ if and only if $\Hom_R(S,E_P)=0$ for all $P\in D$.
This has been generalized to all commutative rings by
Garkusha-Prest~\cite{GP2}. Moreover, $D=\spec
R\setminus\bigcup_{S\in\cc S}\supp_R(S)$.

\begin{prop}\label{iii}%\marginpar{iii}
Let $\cc E\in\inj(X)$ and let $P(\cc E)\in X$ be the point defined
above. Then $\cc E$ and $\cc E_{P(\cc E)}$ are topologically
indistinguishable in $\inj_{fl}(X)$. In other words, for every $\cc
S\in\latfl(X)$ the sheaf $\cc E$ has no $\cc S$-torsion if and only
if $\cc E_{P(\cc E)}$ has no $\cc S$-torsion.
\end{prop}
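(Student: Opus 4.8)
The plan is to use the Reduction Principle, exactly as in the proofs of Proposition~\ref{qq}, Lemma~\ref{sss}, and Theorem~\ref{class}. The statement $P(X)$ to be verified is: for every $\cc E\in\inj(X)$, the sheaves $\cc E$ and $\cc E_{P(\cc E)}$ are topologically indistinguishable in $\inj_{fl}(X)$, i.e.\ for every $\cc S\in\latfl(X)$ one has $t_{\cc S}(\cc E)=0$ iff $t_{\cc S}(\cc E_{P(\cc E)})=0$. First I would check the affine case. If $X=\spec R$, then by Proposition~\ref{loc} we may identify $\Qcoh(X)$ with $\Rfp$, and $\cc E$ with an indecomposable injective $R$-module $E$ with associated prime $P=P(E)$. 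The result then follows from the fact, recalled just before the statement (due to Prest~\cite{Pr2} in the coherent case, Garkusha--Prest~\cite{GP2} in general), that every localizing subcategory of finite type $\cc S\in\latfl(\Rfp)$ is cogenerated by prime injectives, together with Prest's observation that $E$ and $E_{P(E)}$ are elementarily equivalent; concretely, for $\cc S\in\latfl(\Rfp)$ cogenerated by $\{E_Q\mid Q\in D\}$ with $D=\spec R\setminus\bigcup_{S\in\cc S}\supp_R(S)$, one has $t_{\cc S}(E)=0$ iff $\Hom_R(S,E)=0$ for all $S\in\cc S\cap\fp(\Rfp)$, and this last condition is a first-order property of $E$ that $E$ and $E_{P(E)}$ share.

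Next I would handle the inductive step. Suppose $X=U\cup V$ with $U,V$ quasi-compact open subsets and that $P$ holds for $U$, $V$, and $U\cap V$. Fix $\cc E\in\inj(X)$. Since $\cap\cc S_{U_i}=0$ over an affine cover and $\cc E$ is uniform, $\cc E$ has no $\cc S_W$-torsion for at least one of $W=U$ or $W=V$; say $t_{\cc S_U}(\cc E)=0$, so that (by Proposition~\ref{loc}) $\cc E$ is an injective object of $\Qcoh(U)=\Qcoh(X)/\cc S_U$, and one checks from the construction of $P(-)$ that the point of $U$ it determines is $P(\cc E)$ again, hence $\cc E_{P(\cc E)}=j_{U,*}$ of the corresponding injective hull over $U$. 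Now let $\cc S\in\latfl(X)$. By Corollary~\ref{pullf} the finite-type tensor localizing subcategories are controlled by the pullback square of lattices, and more to the point the key mechanism is that $t_{\cc S}(\cc E)$ can be detected after restriction: since $\cc E$ is $\cc S_U$-closed, $t_{\cc S}(\cc E)=0$ is equivalent to $t_{\alpha_{X,U}(\cc S)}(\cc E|_U)=0$ in $\Qcoh(U)$, where $\alpha_{X,U}(\cc S)=\surd(\wh{\cc S}|_U)\in\latfl(U)$ by the proof of Proposition~\ref{lenn}. The same applies verbatim to $\cc E_{P(\cc E)}$. By the inductive hypothesis for $U$, $\cc E|_U$ and $(\cc E|_U)_{P(\cc E)}=\cc E_{P(\cc E)}|_U$ are topologically indistinguishable in $\inj_{fl}(U)$, so $t_{\alpha_{X,U}(\cc S)}(\cc E|_U)=0$ iff $t_{\alpha_{X,U}(\cc S)}(\cc E_{P(\cc E)}|_U)=0$, and pulling back we get $t_{\cc S}(\cc E)=0$ iff $t_{\cc S}(\cc E_{P(\cc E)})=0$, as desired. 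The Reduction Principle then finishes the argument.

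The main obstacle is the affine base case, or more precisely making the translation from the model-theoretic statement ``$E\equiv E_{P(E)}$'' into the purely categorical statement about vanishing of torsion functors. One must be careful that the relevant defining condition for membership in $\cc S$ — vanishing of $\Hom_R(S,-)$ for finitely presented torsion $S$, where $S$ ranges over a set determined by the primes in $D$ — is genuinely a property preserved under elementary equivalence of the injective module being tested; this uses that finitely presented modules give rise to pp-formulas and that $\Hom_R(S,E)\neq 0$ is an elementary condition on $E$. A secondary point requiring care is the verification, in the inductive step, that the point $P(\cc E)$ is computed the same way whether one works in $\Qcoh(X)$ or in $\Qcoh(U)$ (so that $\cc E_{P(\cc E)}|_U$ really is the $P(\cc E)$-envelope computed inside $\Qcoh(U)$) — this is essentially the independence-of-the-affine-chart argument already used to define $\cc E_P$ and $P(\cc E)$, applied one level up.
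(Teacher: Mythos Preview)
Your argument is correct, but the Reduction Principle is overkill here and the paper's proof is considerably shorter. The paper simply chooses an \emph{affine} open $U=\spec R$ such that $\cc E$ has no $\cc S_U$-torsion --- such a $U$ exists among the members of any finite affine cover, by exactly the uniformity argument you invoke in your inductive step (this is how $\beta$ and $P(\cc E)$ were constructed in the paragraph preceding the proposition). Both $\cc E$ and $\cc E_{P(\cc E)}$ are then $\cc S_U$-closed and may be regarded as indecomposable injective $R$-modules via $\Qcoh(X)/\cc S_U\cong\Rfp$; setting $\cc S'=\alpha_{X,U}(\cc S)\in\latfl(\Rfp)$, the question of $\cc S$-torsion in $\Qcoh(X)$ becomes a question of $\cc S'$-torsion in $\Rfp$, and the paper finishes by citing the affine result~\cite[3.5]{GP2}. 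Your inductive step performs exactly this same reduction but lands in a not-necessarily-affine quasi-compact open $U$, which forces you to iterate; the paper collapses the whole induction into a single step by jumping straight to an affine chart. Your identification of the affine case as the essential content, and your care that $P(\cc E)$ is computed consistently whether one works in $\Qcoh(X)$ or in $\Qcoh(U)$, are both on target --- they are precisely what the paper's one-paragraph proof is silently relying on.
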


\begin{proof}
Let $U=\spec R\subset X$ be such that $\cc E$ has no $\cc
S_U$-torsion. Then $P(\cc E)\in U$ and $\cc E$ and $\cc E_{P(\cc
E)}$ have no $\cc S_U$-torsion. These can also be considered as
indecomposable injective $R$-modules, because $\Qcoh(X)/\cc
S_U\cong\Rfp$ by Proposition~\ref{loc}. Denote by $\cc
S'=\alpha_{X,U}(\cc S)\in\latfl(\Rfp)$. Then $\cc E$ has no $\cc
S$-torsion in $\Qcoh(X)$ if and only if $\cc E$ has no $\cc
S'$-torsion in $\Rfp$. Our assertion now follows
from~\cite[3.5]{GP2}.
\end{proof}

\begin{cor}\label{rrr}%\marginpar{rrr}
If $\cc S\in\lattfl(X)$ then $\cc O(\cc S)\bigcap X=Y(\cc S)$, where
$Y(\cc S)=\bigcup_{\cc F\in\cc S}\supp_X(\cc F)\in\lato(X^*)$.
\end{cor}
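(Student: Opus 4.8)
The plan is to unwind the definitions and reduce the statement to a pointwise question about torsion at the injective sheaves $\cc E_P$, which was already settled by Proposition~\ref{iii}. Recall that by Theorem~\ref{tenfg} we have $\cc O(\cc S)=\{\cc E\in\inj(X)\mid t_{\cc S}(\cc E)\neq 0\}$, so $\cc O(\cc S)\cap X$ consists exactly of those points $P\in X$ (viewed inside $\inj(X)$ via the embedding $\alpha$ constructed at the start of this section) for which $\cc E_P$ has nonzero $\cc S$-torsion. So I must show: for $P\in X$, one has $\cc E_P\in\cc O(\cc S)$ if and only if $P\in Y(\cc S)=\bigcup_{\cc F\in\cc S}\supp_X(\cc F)$.

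First I would fix an affine neighbourhood $U=\spec R$ of $P$ and use Proposition~\ref{loc} to identify $\Qcoh(X)/\cc S_U\cong\Rfp$, under which $\cc E_P$ corresponds to the indecomposable injective $E_P=E(R/P)$. Writing $\cc S'=\alpha_{X,U}(\cc S)\in\latfl(\Rfp)$, the point $\cc E_P$ has nonzero $\cc S$-torsion in $\Qcoh(X)$ iff $E_P$ has nonzero $\cc S'$-torsion in $\Rfp$ (this equivalence is exactly the mechanism used in the proof of Proposition~\ref{iii}: $\cc E_P$ is $\cc S_U$-torsionfree, so $\cc S$-torsion of $\cc E_P$ is computed after localizing at $U$). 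By the affine case of the Classification Theorem (Garkusha--Prest~\cite{GP2}, invoked already in Theorem~\ref{class}), $\cc S'$ is cogenerated by the primes in $D=\spec R\setminus\bigcup_{S\in\cc S'}\supp_R(S)$, and $E_P$ has nonzero $\cc S'$-torsion precisely when $P\notin D$, i.e. precisely when $P\in\bigcup_{S\in\cc S'}\supp_R(S)$. It then remains to match $\bigcup_{S\in\cc S'}\supp_R(S)$ with $Y(\cc S)\cap U$. But $\cc S'=\surd(\wh{\cc S}|_U)$, so by Lemma~\ref{ooo} this set equals $\bigcup_{\cc F\in\cc S}\supp_U(\cc F|_U)$; and for every $\cc F\in\Qcoh(X)$ one has $\supp_X(\cc F)\cap U=\supp_U(\cc F|_U)$, whence $\bigcup_{S\in\cc S'}\supp_R(S)=Y(\cc S)\cap U$. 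Since $P\in U$, the two conditions ``$\cc E_P\in\cc O(\cc S)$'' and ``$P\in Y(\cc S)$'' coincide.

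Finally I would note that $Y(\cc S)\in\lato(X^*)$ by Lemma~\ref{sss}, which takes care of the parenthetical assertion in the corollary, and that independence of the chosen affine $U$ is automatic since both sides of the computed equality are intrinsic to $X$ (this is already implicit in the well-definedness of $\cc E_P$ and $P(\cc E)$ discussed above). I expect the only mild subtlety to be the careful bookkeeping in the first step — correctly transporting the notion of $\cc S$-torsion of an injective sheaf through the localization equivalence $\Qcoh(X)/\cc S_U\cong\Rfp$, using that $\cc E_P$ is $\cc S_U$-torsionfree so that nothing is lost — but this is exactly the argument already carried out in Proposition~\ref{iii}, so no new difficulty arises. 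Everything else is a direct appeal to the affine case plus Lemma~\ref{ooo}.
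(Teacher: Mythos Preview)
Your proposal is correct and follows essentially the same route as the paper: pass to an affine neighbourhood $U=\spec R$ of $P$, transport the $\cc S$-torsion question for $\cc E_P$ to an $\cc S'$-torsion question in $\Rfp$ via the mechanism of Proposition~\ref{iii}, and invoke the affine result from~\cite{GP2} (the paper pins this down as \cite[3.4]{GP2}) to get the equivalence with $P\in Y(\cc S')=Y(\cc S)\cap U$. Your use of Lemma~\ref{ooo} to identify $\bigcup_{S\in\cc S'}\supp_R(S)$ with $Y(\cc S)\cap U$ is a slightly more explicit version of what the paper leaves implicit; otherwise the arguments coincide.
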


\begin{proof}
If $\cc E\in\cc O(\cc S)$ and $U=\spec R\subset X$ is such that $\cc
E$ has no $\cc S_U$-torsion, then $P(\cc E)\in U$ and $\cc E_{P(\cc
E)}\in\cc O(\cc S)$ by Proposition~\ref{iii}. Let $\cc
S'=\alpha_{X,U}(\cc S)\in\latfl(\Rfp)$. We have $Y(\cc
S'):=\bigcup_{S\in\cc S}\supp_R(S)\subset Y(\cc S)$. By the proof of
Proposition~\ref{iii} $\cc E_{P(\cc E)}$ has $\cc S'$-torsion. Then
there is a finitely generated ideal $I\subset R$ such that
$R/I\in\cc S'$ and $\Hom_R(R/I,\cc E_{P(\cc E)})\neq 0$. It follows
from~\cite[3.4]{GP2} that $P(\cc E)\in Y(\cc S')$, and hence $\cc
O(\cc S)\bigcap X\subset Y(\cc S)$.

Conversely, if $P\in Y(\cc S)\cap U$ then $\cc E_P$ has $\cc
S'$-torsion by~\cite[3.4]{GP2}. Therefore $\cc E_P\in\cc O(\cc
S')\subset\cc O(\cc S)$. It immediately follows that $\cc O(\cc
S)\bigcap X\supset Y(\cc S)$.
\end{proof}

\begin{prop} \label{bbb} (cf.~\cite[3.7]{GP2}) %\marginpar{bbb}
Let $X$ be a quasi-compact and quasi-separated scheme. Then the maps
   $$Y\in\lato(X^*)\bl\sigma\mapsto\cc O_{Y}=\{\cc E\in\inj(X)\mid P(\cc E)\in Y\}$$
and
   $$\cc O\in\lato(\inj_{fl,\otimes}(X))\bl\varepsilon\mapsto Y_{\cc O}=\{P(\cc E)\in X^*\mid\cc E\in\cc O\}=
     \cc O\cap X^*$$
induce a 1-1 correspondence between the lattices of open sets of
$X^*$ and those of $\inj_{fl,\otimes}(X)$.
\end{prop}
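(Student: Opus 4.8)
The plan is to show that $\sigma$ and $\varepsilon$ are mutually inverse lattice maps between $\lato(X^*)$ and $\lato(\inj_{fl,\otimes}(X))$, and the key technical input is the Classification Theorem~\ref{class} together with Corollary~\ref{rrr}. First I would observe that, by Theorem~\ref{tenfg}, the open subsets of $\inj_{fl,\otimes}(X)$ are exactly the sets $\cc O(\cc S)$ for $\cc S\in\lattfl(X)$, and by Theorem~\ref{class} the map $\phi_X\colon\lato(X^*)\to\lattfl(X)$, $Y\mapsto\cc S(Y)$, is a lattice isomorphism with inverse $\psi_X$. So the natural strategy is to factor $\sigma$ and $\varepsilon$ through $\lattfl(X)$: one checks that $\sigma(Y)=\cc O(\cc S(Y))$ and that $\varepsilon(\cc O(\cc S))=Y(\cc S)=\psi_X(\cc S)$, after which the bijectivity of $\sigma$ and $\varepsilon$ follows formally from that of $\phi_X$, $\psi_X$ and of the correspondence $\cc S\mapsto\cc O(\cc S)$.

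Concretely, the main steps would be as follows. Step one: verify $\cc O_Y=\cc O(\cc S(Y))$ for every $Y\in\lato(X^*)$. By definition $\cc E\in\cc O(\cc S(Y))$ means $t_{\cc S(Y)}(\cc E)\neq 0$; I would use Proposition~\ref{iii} to reduce to the point $\cc E_{P(\cc E)}$, which is topologically indistinguishable from $\cc E$ in $\inj_{fl}(X)$, and then the affine description of $\cc S(Y)$ (Proposition~\ref{qq} and \cite[3.4]{GP2}) shows $t_{\cc S(Y)}(\cc E_{P(\cc E)})\neq 0$ if and only if $P(\cc E)\in Y$; this is precisely $\cc E\in\cc O_Y$. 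Step two: verify $Y_{\cc O}=\cc O\cap X^*=Y(\cc S)$ whenever $\cc O=\cc O(\cc S)$ with $\cc S\in\lattfl(X)$. But this is exactly the content of Corollary~\ref{rrr}, which states $\cc O(\cc S)\cap X=Y(\cc S)$ (using the identification $X\subseteq\inj(X)$ via $\alpha$, and the equality of underlying sets $X=X^*$). Step three: conclude. By Theorem~\ref{tenfg} the assignment $\cc S\mapsto\cc O(\cc S)$ is an inclusion-preserving bijection $\lattfl(X)\to\lato(\inj_{fl,\otimes}(X))$, and by Theorem~\ref{class} the assignments $\phi_X,\psi_X$ are mutually inverse lattice isomorphisms between $\lato(X^*)$ and $\lattfl(X)$. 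Since $\sigma=(\cc S\mapsto\cc O(\cc S))\circ\phi_X$ and $\varepsilon=\psi_X\circ(\cc O(\cc S)\mapsto\cc S)$ by Steps one and two, $\sigma$ and $\varepsilon$ are mutually inverse bijections; that they are lattice maps is immediate since each is a composite of lattice maps.

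The part requiring genuine care is Step one, the identification $\cc O_Y=\cc O(\cc S(Y))$, since this is where the passage from an arbitrary indecomposable injective $\cc E$ to the "geometric" point $P(\cc E)$ has to be controlled: one must know that $\cc E$ detects torsion in $\cc S(Y)$ exactly when $P(\cc E)\in Y$, not merely when some related injective does. This is handled by Proposition~\ref{iii} (topological indistinguishability of $\cc E$ and $\cc E_{P(\cc E)}$ in $\inj_{fl}(X)$), but one should be careful that $\cc S(Y)$ is indeed of finite type and tensor — guaranteed by Proposition~\ref{qq} — so that Proposition~\ref{iii} applies. Step two is essentially a citation of Corollary~\ref{rrr}, and Step three is formal. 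I do not anticipate the need for any Reduction Principle argument here, since all the scheme-theoretic descent has already been absorbed into Theorem~\ref{class} and Corollary~\ref{rrr}; the present proposition is a clean corollary of those two results.
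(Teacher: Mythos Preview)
Your proposal is correct and follows essentially the same route as the paper: both arguments establish $\cc O_Y=\cc O(\cc S(Y))$ via Proposition~\ref{iii} together with Corollary~\ref{rrr} (and its proof, which invokes the affine case~\cite[3.4]{GP2}), identify $Y_{\cc O}=Y(\cc S)$ directly from Corollary~\ref{rrr}, and then conclude from the bijections of Theorem~\ref{tenfg} and Theorem~\ref{class}. The only cosmetic difference is that the paper phrases the final step as checking $Y_{\cc O_Y}=Y$ and $\cc O_{Y_{\cc O}}=\cc O$ directly, whereas you factor $\sigma$ and $\varepsilon$ explicitly through $\lattfl(X)$; the content is identical.
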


\begin{proof}
Let $\cc S(Y)=\{\cc F\in\Qcoh(X)\mid\supp_X(\cc F)\subseteq
Y\}\in\lattfl(X)$; then $Y=Y(\cc S(Y))$ by the Classification
Theorem and $\cc O(\cc S(Y))\bigcap X=Y$ by Corollary~\ref{rrr}. It
follows that $\cc O(\cc S(Y))\subseteq\cc O_Y$. On the other hand,
if $\cc E\in\cc O_{Y}$ then the proof of Corollary~\ref{rrr} shows
that $\cc E_{P(\cc E)}\in\cc O(\cc S(Y))$. Proposition~\ref{iii}
implies $\cc E\in\cc O(\cc S(Y))$, hence $\cc O(\cc
S(Y))\supseteq\cc O_Y$. We see that $\cc O_Y=\cc O(\cc
S(Y))\in\lato(\inj_{fl,\otimes}(X))$.

Let $\cc O\in\lato(\inj_{fl,\otimes}(X))$. By Theorem~\ref{tenfg}
there is a unique $\cc S\in\lattfl(X)$ such that $\cc O=\cc O(\cc
S)$. By Corollary~\ref{rrr} $\cc O\bigcap X=Y(\cc S)=Y_{\cc O}$, and
so $Y_{\cc O}\in\lato(X^*)$. It is now easy to verify that $Y_{\cc
O_{Y}}=Y$ and $\cc O_{Y_{\cc O}}=\cc O$.
\end{proof}

We notice that a subset $Y\subset X^*$ is open and quasi-compact in
$X^*$ if and only if $X\setminus Y$ is an open and quasi-compact
subset in $X$.

\begin{prop}\label{rr}%\marginpar{rr}
An open subset $\cc O\in\lato(\inj_{fl,\otimes}(X))$ is
quasi-compact if and only if it is of the form $\cc O=\cc O(\cc
S(Y))$ with $Y$ an open and quasi-compact subset in $X^*$. The space
$\inj_{fl,\otimes}(X)$ is quasi-compact, the quasi-compact open
subsets are closed under finite intersections and form an open
basis, and every non-empty irreducible closed subset has a generic
point.
\end{prop}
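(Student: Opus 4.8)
The plan is to transport everything through the lattice isomorphism $\sigma:\lato(X^*)\to\lato(\inj_{fl,\otimes}(X))$ of Proposition~\ref{bbb}, reducing statements about $\inj_{fl,\otimes}(X)$ to the already-understood topology of the spectral space $X^*$. First I would establish the quasi-compactness criterion for open sets. The isomorphism $\sigma$ sends $Y\mapsto\cc O(\cc S(Y))$ and is an isomorphism of lattices, hence it preserves arbitrary suprema (unions) and the bottom element; so $\cc O=\cc O(\cc S(Y))$ is a compact element of the lattice $\lato(\inj_{fl,\otimes}(X))$ if and only if $Y$ is a compact element of $\lato(X^*)$. Since in any $T_0$ space the compact elements of the open-set lattice are exactly the quasi-compact open subsets, it suffices to check that $\inj_{fl,\otimes}(X)$ is $T_0$: this is immediate from Theorem~\ref{tenfg}, since distinct points lie in different open sets (the bijection $\cc S\mapsto\cc O(\cc S)$ forces topological indistinguishability to be trivial on the level of the tensor fl-topology, and in fact $\inj_{fl}(X)$ is already $T_0$ by Proposition~\ref{tp}). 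So $\cc O$ is quasi-compact iff $Y$ is, and by the remark preceding the proposition, $Y\subseteq X^*$ is quasi-compact open iff $X\setminus Y$ is quasi-compact open in $X$, which is exactly the indexing condition in the Classification Theorem.

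Next, quasi-compactness of the whole space: $X=X^*$ as a set, and $X$ itself is open in $X^*$ (its complement $\emptyset$ is trivially quasi-compact open in $X$), and $X$ is quasi-compact by hypothesis; hence $X^*$ is quasi-compact, so $\inj_{fl,\otimes}(X)=\sigma(X^*)=\cc O(\Qcoh(X))$ is quasi-compact. That the quasi-compact opens are closed under finite intersection and form a basis likewise transports from $X^*$: they are closed under finite intersection because $\sigma$ preserves finite infima (Theorem~\ref{tenfg} shows $\cc O(\cc S_1)\cap\cc O(\cc S_2)=\cc O(\cc S_1\cap\cc S_2)$, and the intersection of two finite-type tensor subcategories cogenerated by quasi-compact-complement subsets is again of this type by the Classification Theorem's formula $\cc S(Y_1)\cap\cc S(Y_2)=\cc S(Y_1\cap Y_2)$), and they form a basis because the quasi-compact opens of $X^*$ form a basis for $X^*$ (this is the defining property of the dual topology: the $Y_i$ with $X\setminus Y_i$ quasi-compact open in $X$ generate $\lato(X^*)$) and $\sigma$ is a lattice isomorphism taking that generating family to a generating family of quasi-compact opens.

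Finally, for the statement about irreducible closed subsets having generic points: irreducibility and the sober property are purely lattice-theoretic conditions on $\lato(\inj_{fl,\otimes}(X))$ — a space is sober iff every meet-irreducible proper open set is of the form $X\setminus\overline{\{x\}}$ — so again $\sigma$ transports soberness of $X^*$ to soberness of $\inj_{fl,\otimes}(X)$, provided one also knows the point sets correspond, which is exactly the content of $\varepsilon$ in Proposition~\ref{bbb}: $\cc O\mapsto\cc O\cap X^*$ is a bijection between points (every $\cc E\in\inj(X)$ has $P(\cc E)\in X^*$, and by Proposition~\ref{iii} $\cc E$ is topologically indistinguishable from $\cc E_{P(\cc E)}$, so up to indistinguishability the point set of $\inj_{fl,\otimes}(X)$ is $X^*$). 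Since $X^*$ is spectral, hence sober, the conclusion follows. The main obstacle I anticipate is not any single hard estimate but the bookkeeping needed to make precise that $\sigma$ is a homeomorphism-like correspondence at the level of \emph{points} and not merely of open-set lattices: one must invoke Proposition~\ref{iii} to collapse each $\cc E$ onto $\cc E_{P(\cc E)}$ so that the point map $\varepsilon$ of Proposition~\ref{bbb} genuinely identifies $\inj_{fl,\otimes}(X)$ (modulo topological indistinguishability) with $X^*$, after which every spectral-space axiom transports formally.
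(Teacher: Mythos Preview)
Your overall strategy --- transport everything through the lattice isomorphism $\sigma:\lato(X^*)\to\lato(\inj_{fl,\otimes}(X))$ of Proposition~\ref{bbb} --- is exactly the paper's approach in slightly more abstract clothing. The paper argues each clause by hand using the Classification Theorem and Corollary~\ref{rrr}, but the content is the same: quasi-compact opens correspond under $\sigma$, and irreducible closed subsets correspond to irreducible closed subsets of $X^*$.

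There is, however, a genuine factual error in your write-up. You claim that $\inj_{fl,\otimes}(X)$ is $T_0$, and you try to derive this from Theorem~\ref{tenfg} and Proposition~\ref{tp}. This is false: the paper states explicitly, in the paragraph immediately following this proposition, that $\inj_{fl,\otimes}(X)$ is \emph{not} $T_0$ in general. Your justification confuses a bijection between \emph{localizing subcategories} and open sets with a bijection between \emph{points} and open sets; and Proposition~\ref{tp} concerns $\inj_{gab}$, not $\inj_{fl}$. Indeed Proposition~\ref{iii} (which you yourself cite later) shows that $\cc E$ and $\cc E_{P(\cc E)}$ are topologically indistinguishable, and these need not be isomorphic.

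Fortunately this error is harmless for the step where you invoke it: the equivalence ``compact element of $\lato(Z)$ $\Leftrightarrow$ quasi-compact open subset of $Z$'' holds in \emph{any} topological space, $T_0$ or not, since suprema in $\lato(Z)$ are unions. So the first half of your argument is fine once you simply delete the $T_0$ claim.

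The error resurfaces more seriously in the last paragraph. You write that ``the sober property [is a] purely lattice-theoretic condition,'' but soberness entails $T_0$, so $\inj_{fl,\otimes}(X)$ cannot be sober. The proposition only asserts the \emph{existence} of generic points, not uniqueness. Your recovery via Proposition~\ref{iii} is the right idea and matches the paper: given a closed irreducible $U\subseteq\inj_{fl,\otimes}(X)$, one shows $U\cap X^*$ is closed irreducible in $X^*$ (this is lattice-theoretic, via $\sigma$), takes its generic point $P$, and checks that $\cc E_P$ is generic for $U$ (using that $\sigma^{-1}$ is intersection with $X^*$, so $\overline{\{\cc E_P\}}$ and $U$ have the same intersection with $X^*$, hence coincide). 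Rephrase this part without the word ``sober'' and it goes through; this is precisely what the paper does, citing Corollary~\ref{rrr}.
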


\begin{proof}
Let $\cc O\in\lato(\inj_{fl,\otimes}(X))$ be quasi-compact. By
Theorem~\ref{tenfg} there is a unique $\cc S\in\lattfl(X)$ such that
$\cc O=\cc O(\cc S)$. By the Classification Theorem $\cc S=\cc
S(Y)=\surd(\cup_I\cc S(Y_i))$, where $Y=\bigcup_{\cc F\in\cc
S}\supp_X(\cc F)\in\lato(X^*)$, each $Y_i$ is such that $X\setminus
Y_i$ is open and quasi-compact subset of $X$, and $Y=\bigcup_IY_i$.
Then $\cc O=\bigcup_I\cc O(\cc S(Y_i))$. Since $\cc O$ is
quasi-compact, there is a finite subset $J\subset I$ such that $\cc
O=\bigcup_J\cc O(\cc S(Y_i))=\cc O(\surd(\cup_J\cc S(Y_i)))=\cc
O(\cc S(\cup_JY_i))$. Since $X$ is spectral, then
$X\setminus(\cup_JY_i)=\cap_J(X\setminus Y_i)$ is an open and
quasi-compact subset in $X$.

Conversely, let $\cc O=\cc O(\cc S(Y))$ with $X\setminus Y$ an open
and quasi-compact subset in $X$ and let $\cc O=\bigcup_I\cc O_i$
with each $\cc O_i\in\lato(\inj_{fl,\otimes}(X))$. By
Theorem~\ref{tenfg} there are unique $\cc S_i\in\lattfl(X)$ such
that $\cc O_i=\cc O(\cc S_i)$ and $\cc S(Y)=\surd(\cup_I\cc S_i)$.
We set $Y_i=\bigcup_{\cc F\in\cc S_i}\supp_X(\cc F)$ for each $i\in
I$. By Lemma~\ref{ooo} and the Classification Theorem one has
$Y=\bigcup_IY_i$. Since $Y$ is quasi-compact in $X^*$, there is a
finite subset $J\subset I$ such that $Y=\bigcup_JY_i$. It follows
that $\cc S(Y)=\surd(\cup_J\cc S_i)$ and $\cc O=\bigcup_J\cc O_i$.

The space $\inj_{fl,\otimes}(X)$ is quasi-compact, because it equals
$\cc O(\cc S(X^*))$ and $X^*$ is quasi-compact. The quasi-compact
open subsets are closed under finite intersections, because $\cc
O(\cc S(Y_1))\cap\cc O(\cc S(Y_2))=\cc O(\cc S(Y_1\cap Y_2))$ with
$Y_1,Y_2$ open and quasi-compact subsets in $X^*$. Since $\cc O(\cc
S(Y))=\cup_I\cc O(\cc S(Y_i))$, where $Y=\cup_IY_i$ and each $Y_i$
is an open and quasi-compact subset in $X^*$, the quasi-compact open
subsets also form an open basis.

Finally, it follows from Corollary~\ref{rrr} that a subset $U$ of
$\inj_{fl,\otimes}(X)$ is closed and irreducible \ifff so is $\wh
U:=U\cap X^*$. Since $X^*$ is spectral then $\wh U$ has a generic
point $P$. The point $\cc E_P\in U$ is generic.
\end{proof}

Though the space $\inj_{fl,\otimes}(X)$ is not in general $T_0$
(see~\cite{GP}), nevertheless we make the same definition for
$(\inj_{fl,\otimes}(X))^*$ as for spectral spaces and denote it by
$\inj_{zar}(X)$. By definition, $\cc Q\in\lato(\inj_{zar}(X))$ \ifff
$\cc Q=\cup_I\cc Q_i$ with each $\cc Q_i$ having quasi-compact and
open complement in $\inj_{fl,\otimes}(X)$. The topology on
$\inj_{zar}(X)$ will also be referred to as the {\it Zariski
topology}. Notice that the Zariski topology on $\inj_{zar}(\spec
R)$, $R$ is coherent, concides with the Zariski topology on the
injective spectrum $\inj R$ in the sense of Prest~\cite{Pr2}.

\begin{thm}[cf.~Garkusha-Prest~\cite{GP,GP1,GP2})]\label{bmb}%\marginpar{bmb}
Let $X$ be a quasi-compact and quasi-separated scheme. The space $X$
is dense and a retract in $\inj_{zar}(X)$. A left inverse to the
embedding $X\hookrightarrow\inj_{zar}(X)$ takes $\cc
E\in\inj_{zar}(X)$ to $P(\cc E)\in X$. Moreover, $\inj_{zar}(X)$ is
quasi-compact, the basic open subsets $\cc Q$, with
$\inj(X)\setminus\cc Q$ quasi-compact and open subset in
$\inj_{fl,\otimes}(X)$, are quasi-compact, the intersection of two
quasi-compact open subsets is quasi-compact, and every non-empty
irreducible closed subset has a generic point.
\end{thm}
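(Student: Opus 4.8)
\textbf{Proof proposal for Theorem~\ref{bmb}.}

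The plan is to transport each assertion across the bijections already established, so that statements about $\inj_{zar}(X)$ become statements about $X^*$ and its dual $X$, where everything is classical. First I would recall that by definition $\inj_{zar}(X)=(\inj_{fl,\otimes}(X))^*$, so by Proposition~\ref{rr} the space $\inj_{fl,\otimes}(X)$ enjoys all the properties of a spectral space \emph{except} possibly $T_0$, and hence Hochster duality formalism applies verbatim: a set is open in $\inj_{zar}(X)$ iff it is a union of complements of quasi-compact open sets of $\inj_{fl,\otimes}(X)$. By Proposition~\ref{bbb} the assignment $Y\mapsto\cc O_Y$ is a lattice isomorphism $\lato(X^*)\xrightarrow{\sim}\lato(\inj_{fl,\otimes}(X))$, and this isomorphism matches quasi-compact opens with quasi-compact opens by Proposition~\ref{rr} (the quasi-compact opens of $\inj_{fl,\otimes}(X)$ are exactly the $\cc O(\cc S(Y))$ with $Y$ quasi-compact open in $X^*$, and these correspond to $Y$ with $X\setminus Y$ quasi-compact open in $X$). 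Dualizing, one obtains a lattice isomorphism $\lato((X^*)^*)=\lato(X)\xrightarrow{\sim}\lato(\inj_{zar}(X))$ carrying quasi-compact opens to quasi-compact opens. Since $X$ is spectral, this immediately gives that $\inj_{zar}(X)$ is quasi-compact, that its quasi-compact opens form a basis closed under finite intersection, and (by the generic-point argument already used in Proposition~\ref{rr}, via Corollary~\ref{rrr}) that every nonempty irreducible closed set has a generic point.

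For the retraction statement I would argue as follows. The map $\beta:\inj(X)\to X$, $\cc E\mapsto P(\cc E)$, was constructed in Section~6 and satisfies $\beta\alpha=1_X$ where $\alpha:X\hookrightarrow\inj(X)$ is the embedding $P\mapsto\cc E_P$. It remains to check $\beta$ is continuous as a map $\inj_{zar}(X)\to X$ and that $\alpha$ is continuous as a map $X\to\inj_{zar}(X)$. For continuity of $\beta$: a quasi-compact open $W\subset X$ has the form $W=X\setminus Z$ with $Z$ closed; writing the complement $Z$ as a union of quasi-compact closed sets and using that $\beta^{-1}$ of a quasi-compact open $U\subset X$ is $\{\cc E\mid P(\cc E)\in U\}=\cc O_{U^\vee}\cap(\text{something})$—more precisely, by Corollary~\ref{rrr}, $\beta^{-1}(Y)=\cc O(\cc S(Y^c{}^*))\cap\dots$—I would spell out that preimages of opens of $X$ under $\beta$ are opens of $\inj_{zar}(X)$, using exactly the correspondence $\cc O(\cc S)\cap X=Y(\cc S)$ of Corollary~\ref{rrr} together with Proposition~\ref{iii} (topological indistinguishability of $\cc E$ and $\cc E_{P(\cc E)}$ in the fl-topology, hence in its dual). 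For continuity of $\alpha$: a basic open of $\inj_{zar}(X)$ is the complement of some $\cc O(\cc S(Y))$ with $Y$ quasi-compact open in $X^*$, and $\alpha^{-1}$ of it is $\{P\in X\mid\cc E_P\notin\cc O(\cc S(Y))\}=X\setminus Y$ by Corollary~\ref{rrr}, which is open in $X$. This proves $X$ is a retract of $\inj_{zar}(X)$ with the stated left inverse.

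For density: I would show that every nonempty basic open $\cc Q$ of $\inj_{zar}(X)$ meets $X$. Such a $\cc Q$ is the complement of a quasi-compact open $\cc O(\cc S(Y))$ of $\inj_{fl,\otimes}(X)$; if $\cc Q\neq\emptyset$ then $\cc O(\cc S(Y))\neq\inj(X)$, so $\cc S(Y)\neq\Qcoh(X)$, so $Y\neq X$; pick $P\in X\setminus Y$, then by Corollary~\ref{rrr} $\cc E_P\notin\cc O(\cc S(Y))$, i.e. $\cc E_P\in\cc Q\cap X$. Hence $X$ is dense. Assembling these pieces yields all clauses of the theorem.

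\emph{Main obstacle.} The genuinely delicate point is verifying continuity of the two maps $\alpha,\beta$ between $X$ and $\inj_{zar}(X)$ cleanly, because $\inj_{fl,\otimes}(X)$ fails to be $T_0$, so one cannot simply invoke Hochster's theorem as a black box; one must track the correspondence $\cc O(\cc S)\cap X=Y(\cc S)$ from Corollary~\ref{rrr} and the indistinguishability result Proposition~\ref{iii} carefully to see that dualizing and restricting to $X$ behave compatibly. Once that bookkeeping is in place, every remaining assertion is a formal consequence of spectrality of $X$ transported through the established lattice isomorphisms.
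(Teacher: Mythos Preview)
Your proposal is correct and follows essentially the same route as the paper: both rely on Propositions~\ref{iii}, \ref{bbb}, \ref{rr} and Corollary~\ref{rrr} to pass between $X$ (or $X^*$) and $\inj_{zar}(X)$ (or $\inj_{fl,\otimes}(X)$), and both establish density, the retraction, and generic points in the same way. The one notable stylistic difference is in proving quasi-compactness of the basic opens $\cc Q$: the paper argues directly by taking a cover $\cc Q=\bigcup_i\cc Q_i$, intersecting with $X$ to get a cover $D=\bigcup_i D_i$ of the quasi-compact set $D=\cc Q\cap X$, extracting a finite subcover, and then using Proposition~\ref{bbb} to lift it back; you instead invoke that the complete lattice isomorphism $\lato(X^*)\cong\lato(\inj_{fl,\otimes}(X))$ of Proposition~\ref{bbb} preserves compact elements (by Proposition~\ref{rr}), and hence its dual $\lato(X)\cong\lato(\inj_{zar}(X))$ does too. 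Your packaging is slightly more abstract but amounts to the same computation. One small remark: your paragraph on the continuity of $\beta$ becomes garbled midway (the expression ``$\beta^{-1}(Y)=\cc O(\cc S(Y^c{}^*))\cap\dots$'' is not what you want); the clean statement is simply that for $D\subset X$ quasi-compact open, $\beta^{-1}(D)=\{\cc E\mid P(\cc E)\in D\}=\inj(X)\setminus\cc O_{X\setminus D}$, which is a basic open of $\inj_{zar}(X)$ by Propositions~\ref{bbb} and~\ref{rr}.
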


\begin{proof}
Let $\cc Q\in\lato(\inj_{zar}(X))$ be such that $\cc
O:=\inj(X)\setminus\cc Q$ is a quasi-compact and open subset in
$\inj_{fl,\otimes}(X)$ and let $Y=\cc O\cap X$ and $D=X\setminus
Y=\cc Q\cap X$. Since $Y$ is a quasi-compact subset in $X^*$, then
$D$ is a quasi-compact subset in $X$. Notice that $\cc O=\cc O(\cc
S_D)$, where $\cc S_D=\{\cc F\in\Qcoh(X)\mid\cc F|_D=0\}$. Clearly,
$X$ is dense in $\inj_{zar}(X)$ and $\alpha:X\to\inj_{zar}(X)$ is a
continuous map.

The map $\beta:\inj_{zar}(X)\to X$, $\cc E\mapsto P(\cc E)$, is left
inverse to $\alpha$. Obviously, $\beta$ is continuous. Thus $X$ is a
retract of $\inj_{zar}(X)$.

Let us show that the basic open set $\cc Q$ is quasi-compact. Let
$\cc Q=\bigcup_{i\in\Omega}\cc Q_i$ with each $\inj(X)\setminus\cc
Q_i$ a quasi-compact and open subset in $\inj_{fl,\otimes}(X)$ and
$D_i:=\cc Q_i\cap X$. Since $D$ is quasi-compact, then
$D=\bigcup_{i\in\Omega_0}D_i$ for some finite subset
$\Omega_0\subset\Omega$.

Assume $\cc E\in\cc Q\setminus\bigcup_{i\in\Omega_0}\cc Q_i$. It
follows from Proposition~\ref{bbb} that ${P(\cc E)}\in\cc Q\cap
X=D=\bigcup_{i\in\Omega_0}D_i$. Proposition~\ref{bbb} implies that
$\cc E\in\cc Q_{i_0}$ for some $i_0\in\Omega_0$, a contradiction. So
$\cc Q$ is quasi-compact. It also follows that the intersection of
two quasi-compact open subsets is quasi-compact and that
$\inj_{zar}(X)$ is quasi-compact.

Finally, it follows from Corollary~\ref{rrr} that a subset $U$ of
$\inj_{zar}(X)$ is closed and irreducible \ifff so is $\wh U:=U\cap
X$. Since $X$ is spectral then $\wh U$ has a generic point $P$. The
point $\cc E_P\in U$ is generic.
\end{proof}

\begin{cor}\label{eme}%\marginpar{eme}
Let $X$ be a quasi-compact and quasi-separated scheme. The following
relations hold:
   $$\inj_{zar}(X)=(\inj_{fl,\otimes}(X))^* \textrm{ and }\inj_{fl,\otimes}(X)=(\inj_{zar}(X))^*.$$
\end{cor}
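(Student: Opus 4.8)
The first identity is nothing but the definition of $\inj_{zar}(X)$, which was introduced above as the ``dual'' space $(\inj_{fl,\otimes}(X))^*$; so the entire content lies in the second identity, which asserts that $\bigl((\inj_{fl,\otimes}(X))^*\bigr)^*=\inj_{fl,\otimes}(X)$. Since the underlying set $\inj(X)$ is common to all of $\inj_{fl,\otimes}(X)$, $\inj_{zar}(X)$ and their duals, it is enough to prove that the topologies of $(\inj_{zar}(X))^*$ and of $\inj_{fl,\otimes}(X)$ share a common open basis, and for this I would compare the two lattices of open sets directly.

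The first step is to pin down the quasi-compact open subsets of $\inj_{zar}(X)$. By the very definition of the dual topology, the sets $\cc Q$ for which $\inj(X)\setminus\cc Q$ is quasi-compact and open in $\inj_{fl,\otimes}(X)$ form an open basis of $\inj_{zar}(X)$, and by Theorem~\ref{bmb} each of them is quasi-compact. A finite union $\cc Q_1\cup\cdots\cup\cc Q_n$ of such basic opens is again of this type, since its complement $\bigcap_{i=1}^n(\inj(X)\setminus\cc Q_i)$ is a finite intersection of quasi-compact open subsets of $\inj_{fl,\otimes}(X)$, hence quasi-compact and open by Proposition~\ref{rr}. As every quasi-compact open of $\inj_{zar}(X)$ is covered by, hence equal to, a finite union of basic opens, it follows that the quasi-compact open subsets of $\inj_{zar}(X)$ are precisely the complements in $\inj(X)$ of the quasi-compact open subsets of $\inj_{fl,\otimes}(X)$ (equivalently, by Proposition~\ref{rr}, the sets $\inj(X)\setminus\cc O(\cc S(Y))$ with $X\setminus Y$ quasi-compact and open in $X$).

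It remains to form the dual once more. By definition an open basis of $(\inj_{zar}(X))^*$ is given by the complements of the quasi-compact open subsets of $\inj_{zar}(X)$, which by the previous step are exactly the quasi-compact open subsets of $\inj_{fl,\otimes}(X)$. By Proposition~\ref{rr} the latter already constitute an open basis of $\inj_{fl,\otimes}(X)$; taking arbitrary unions on both sides yields $\lato((\inj_{zar}(X))^*)=\lato(\inj_{fl,\otimes}(X))$, that is, $(\inj_{zar}(X))^*=\inj_{fl,\otimes}(X)$, as claimed.

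I expect the only genuine subtlety to be bookkeeping around the failure of the $T_0$ axiom: since $\inj_{fl,\otimes}(X)$ need not be $T_0$, one cannot simply invoke Hochster's identity $(Z^*)^*=Z$ for spectral spaces $Z$. The argument sketched above avoids this altogether, using only the lattices of open sets together with the descriptions of the quasi-compact opens supplied by Proposition~\ref{rr} and Theorem~\ref{bmb}, and never the separation axiom; indeed the ``spectral-like'' properties recorded in those two statements are exactly what makes the double dual collapse back to $\inj_{fl,\otimes}(X)$.
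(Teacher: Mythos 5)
Your proof is correct and is essentially the argument the paper intends: the corollary is stated without a separate proof precisely because it is the direct content of Theorem~\ref{bmb} together with Proposition~\ref{rr}, and your write-up supplies the bookkeeping. The key steps — identifying the quasi-compact open subsets of $\inj_{zar}(X)$ as exactly the complements of the quasi-compact open subsets of $\inj_{fl,\otimes}(X)$ (using the quasi-compactness of basic opens from Theorem~\ref{bmb} and the stability under finite intersection from Proposition~\ref{rr}), and then observing that dualizing again recovers the basis of quasi-compact opens of $\inj_{fl,\otimes}(X)$ — are exactly what is needed, and your remark about not being able to invoke Hochster's $(Z^*)^*=Z$ verbatim (since $\inj_{fl,\otimes}(X)$ may fail $T_0$) correctly identifies why the corollary requires an argument at all rather than a citation.
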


Though the space $\inj_{zar}(X)$ is strictly bigger than $X$ in
general (see~\cite{GP}), their lattices of open subsets are
isomorphic. More precisely, Proposition~\ref{bbb} implies that the
maps
   $$D\in\lato(X)\mapsto\cc Q_{D}=\{\cc E\in\inj(X)\mid P(\cc E)\in D\}$$
and
   $$\cc Q\in\lato(\inj_{zar}(X))\mapsto D_{\cc Q}=\{P(\cc E)\in X\mid\cc E\in\cc Q\}=
     \cc Q\cap X$$
induce a 1-1 correspondence between the lattices of open sets of $X$
and those of $\inj_{zar}(X)$. Moreover, sheaves do not see any
difference between $X$ and $\inj_{zar}(X)$. Namely, the following is
true.

\begin{prop}\label{spi}%\marginpar{spi}
Let $X$ be a quasi-compact and quasi-separated scheme. Then the maps
of topological spaces $\alpha:X\to\inj_{zar}(X)$ and
$\beta:\inj_{zar}(X)\to X$ induce isomorphisms of the categories of
sheaves
   $$\beta_*:Sh(\inj_{zar}(X))\lra{\cong}Sh(X),\quad\alpha_*:Sh(X)\lra{\cong}Sh(\inj_{zar}(X)).$$
\end{prop}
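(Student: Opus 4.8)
The plan is to show that $\alpha$ and $\beta$ induce mutually inverse equivalences on sheaf categories, and since $\beta\alpha=1_X$ (established in the text), it suffices to produce a natural isomorphism $\alpha_*\beta_*\cong 1_{Sh(\inj_{zar}(X))}$, or equivalently to show that $\beta_*$ is an equivalence. The key observation is that $\beta$ is a continuous retraction with $\alpha$ a section, and the lattices of open sets $\lato(X)$ and $\lato(\inj_{zar}(X))$ are isomorphic via the maps $D\mapsto\cc Q_D$ and $\cc Q\mapsto\cc Q\cap X$ recalled just before the statement. First I would verify that these lattice isomorphisms are realized by the continuous maps $\alpha$ and $\beta$: concretely, $\alpha^{-1}(\cc Q)=\cc Q\cap X=D_{\cc Q}$ and $\beta^{-1}(D)=\cc Q_D$. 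The first identity is immediate from $\alpha$ being the inclusion (identified via $\beta\alpha=1_X$); the second says $P(\cc E)\in D$ iff $\cc E\in\beta^{-1}(D)$, which is the definition of $\beta$.

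Next I would use the general fact that a continuous map $f:Z\to W$ inducing an isomorphism of frames $\lato(W)\xrightarrow{\sim}\lato(Z)$, $V\mapsto f^{-1}(V)$, induces an equivalence $f_*:Sh(Z)\to Sh(W)$ with quasi-inverse $f^*$. Indeed, sheaves on a space depend only on the frame of open sets (the category of sheaves is $Sh(\cc O(Z))$ in the topos-theoretic sense), so an isomorphism of frames induces an equivalence of sheaf categories, and when the frame isomorphism comes from an actual continuous map $f$, this equivalence is exactly $(f^*,f_*)$. Applying this to $\beta:\inj_{zar}(X)\to X$, whose induced frame map $\lato(X)\to\lato(\inj_{zar}(X))$, $D\mapsto\beta^{-1}(D)=\cc Q_D$, is the isomorphism recalled above, yields that $\beta_*$ is an equivalence. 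Similarly $\alpha^{-1}:\lato(\inj_{zar}(X))\to\lato(X)$ is the inverse frame isomorphism, so $\alpha_*$ is an equivalence as well; and since $\beta\alpha=1_X$ one gets $\beta_*\alpha_*=1_{Sh(X)}$, forcing $\alpha_*$ and $\beta_*$ to be mutually inverse.

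Alternatively, avoiding the topos-theoretic black box, I would argue directly: for a sheaf $\cc F$ on $\inj_{zar}(X)$ the unit $\cc F\to\alpha_*\alpha^*\cc F=\alpha_*\beta_*\cc F$ is checked to be an isomorphism on the basis of open sets $\{\cc Q_D\}_{D\in\lato(X)}$, since $(\alpha_*\beta_*\cc F)(\cc Q_D)=\cc F(\beta^{-1}(\alpha^{-1}(\cc Q_D)))=\cc F(\beta^{-1}(D))=\cc F(\cc Q_D)$, using $\alpha^{-1}(\cc Q_D)=D$ and $\beta^{-1}(D)=\cc Q_D$; compatibility with restrictions is routine. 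Likewise $\beta_*\alpha^*\cong 1$, so $\beta_*$ and $\alpha_*$ are mutually inverse. The main obstacle is purely bookkeeping: one must be careful that the frame isomorphism $\lato(X)\cong\lato(\inj_{zar}(X))$ is genuinely induced by the point-set maps $\alpha,\beta$ (not just an abstract isomorphism), i.e.\ that $\alpha^{-1}(\cc Q_D)=D$ and $\beta^{-1}(D)=\cc Q_D$ as recorded above; once this is pinned down, the rest is formal. Continuity of $\alpha$ and $\beta$ was already noted in the proof of Theorem~\ref{bmb}.
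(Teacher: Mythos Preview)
Your direct alternative argument is exactly the paper's proof: from $\beta\alpha=1$ one gets $\beta_*\alpha_*=1$, and then one checks $\alpha_*\beta_*(\cc F)(\cc Q)=\beta_*(\cc F)(D_{\cc Q})=\cc F(\cc Q_{D_{\cc Q}})=\cc F(\cc Q)$ for every open $\cc Q$, so $\alpha_*\beta_*=1$. Two small remarks: the sets $\cc Q_D$ are not merely a basis but constitute \emph{all} open sets of $\inj_{zar}(X)$ by the lattice isomorphism recalled just before the proposition, so no passage from a basis to general opens is needed; and the identification $\alpha^*=\beta_*$ you invoke in passing is correct here (since $\alpha^{-1}$ and $\beta^{-1}$ are inverse frame isomorphisms the inverse-image presheaf along $\alpha$ already equals $\beta_*$) but is unnecessary for the computation and the paper does not use it. Your first, frame-theoretic approach is a valid abstraction of the same computation and buys nothing extra here beyond conceptual packaging.
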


\begin{proof}
Since $\beta\alpha=1$ it follows that $\beta_*\alpha_*=1$. By
definition, $\beta_*(\cc F)(D)=\cc F(\cc Q_D)$ for any $\cc F\in
Sh(\inj_{zar}(X)), D\in\lato(X)$ and $\alpha_*(\cc G)(\cc Q)=\cc
G(D_{\cc Q})$ for any $\cc G\in Sh(X), \cc
Q\in\lato(\inj_{zar}(X))$. We have:
   $$\alpha_*\beta_*(\cc F)(\cc Q)=\beta_*(\cc F)(D_{\cc Q})=\cc F(\cc Q_{D_{\cc Q}})=\cc F(\cc Q).$$
We see that $\alpha_*\beta_*=1$, and so $\alpha_*,\beta_*$ are
mutually inverse isomorphisms.
\end{proof}

Let $\cc O_{\inj_{zar}(X)}$ denote the sheaf of commutative rings
$\alpha_*(\cc O_X)$; then $(\inj_{zar}(X),\cc O_{\inj_{zar}(X)})$ is
plainly a locally ringed space. If we set $\alpha^\sharp:\cc
O_{\inj_{zar}(X)}\to\alpha_*\cc O_X$ and $\beta^\sharp:\cc
O_X\to\beta_*\cc O_{\inj_{zar}(X)}$ to be the identity maps, then
the map of locally ringed spaces
   $$(\alpha,\alpha^\sharp):(X,\cc O_X)\to(\inj_{zar}(X),\cc O_{\inj_{zar}(X)})$$
is right inverse to
   $$(\beta,\beta^\sharp):(\inj_{zar}(X),\cc O_{\inj_{zar}(X)})\to(X,\cc O_X).$$
Observe that it is {\it not\/} a scheme in general, because
$\inj_{zar}(X)$ is not a $T_0$-space. Proposition~\ref{spi} implies
that the categories of the $\cc O_{\inj_{zar}(X)}$-modules and $\cc
O_{X}$-modules are naturally isomorphic.

\section{The prime spectrum of an ideal lattice}

Inspired by recent work of Balmer~\cite{B}, Buan, Krause, and
Solberg~\cite{BKS} introduce the notion of an ideal lattice and
study its prime ideal spectrum. Applications arise from abelian or
triangulated tensor categories.

\begin{defs}[Buan, Krause, Solberg~\cite{BKS}] {\rm
An {\it ideal lattice\/} is by definition a partially ordered set
$L=(L,\leq)$, together with an associative multiplication $L\times
L\to L$, such that the following holds.
\begin{enumerate}
\item[(L1)] The poset $L$ is a {\it complete lattice\/}, that is,
$$\sup A = \bigvee_{a\in A} a\quad\text{and}\quad \inf A = \bigwedge_{a\in A}
a$$ exist in $L$ for every subset $A\subseteq L$.
\item[(L2)] The lattice $L$ is {\it compactly generated\/}, that is,
every element in $L$ is the supremum of a set of compact elements.
(An element $a\in L$ is {\em compact}, if for all $A\subseteq L$
with $a\leq \sup A$ there exists some finite $A'\subseteq A$ with
$a\leq\sup A'$.)
\item[(L3)] We have for all $a,b,c\in L$
$$a(b\vee c)=ab\vee ac\quad\text{and}\quad (a\vee b)c=ac\vee bc.$$
\item[(L4)] The element $1=\sup L$ is compact, and $1a=a=a1$ for all $a\in L$.
\item[(L5)] The product of two compact elements is again compact.
\end{enumerate}
A {\it morphism\/} $\phi\colon L\to L'$ of ideal lattices is a map
satisfying
\begin{gather*}\label{eq:mor}
\phi(\bigvee_{a\in A}a)=\bigvee_{a\in A}\phi(a)\quad \text{for}\quad
A\subseteq L, \\
\phi(1)=1\quad\text{and}\quad\phi(ab)=\phi(a)\phi(b)\quad\text{for}\quad
a,b\in L.\notag
\end{gather*}
}\end{defs}

Let $L$ be an ideal lattice. Following~\cite{BKS} we define the
spectrum of prime elements in $L$. An element $p\neq 1$ in $L$ is
 {\em prime} if $ab\leq p$ implies $a\leq p$ or $b\leq p$ for
all $a,b\in L$. We denote by $\spec L$ the set of prime elements in
$L$ and define for each $a\in L$
   $$V(a)=\{p\in\spec L\mid a\leq p\}\quad\text{and}\quad D(a)=\{p\in\spec L\mid a\not\leq p\}.$$
The subsets of $\spec L$ of the form $V(a)$ are closed under forming
arbitrary intersections and finite unions.  More precisely,
   $$V(\bigvee_{i\in\Omega} a_i)=\bigcap_{i\in\Omega} V(a_i)\quad\text{and}\quad
   V(ab)=V(a)\cup V(b).$$
Thus we obtain the {\it Zariski topology\/} on $\spec L$ by
declaring a subset of $\spec L$ to be {\it closed\/} if it is of the
form $V(a)$ for some $a\in L$. The set $\spec L$ endowed with this
topology is called the {\it prime spectrum\/} of $L$.  Note that the
sets of the form $D(a)$ with compact $a\in L$ form a basis of open
sets. The prime spectrum $\spec L$ of an ideal lattice $L$ is
spectral~\cite[2.5]{BKS}.

There is a close relation between spectral spaces and ideal
lattices. Given a topological space $X$, we denote by $L_{\open}(X)$
the lattice of open subsets of $X$ and consider the multiplication
map
   $$L_{\open}(X)\times L_{\open}(
 X)\to L_{\open}(X),\quad (U,V)\mapsto UV=U\cap V.$$
The lattice $L_{\open}(X)$ is complete.

The following result, which appears in~\cite{BKS}, is part of the
Stone Duality Theorem (see, for instance, \cite{Jo}).

\begin{prop}\label{pr:openlattice}
Let $X$ be a spectral space. Then $L_{\open}(X)$ is an ideal
lattice. Moreover, the map
   $$X\to\spec L_{\open}(X),\quad x \mapsto X\setminus \overline{\{x\}},$$
is a homeomorphism.
\end{prop}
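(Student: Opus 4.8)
The plan is to verify the two defining properties of an ideal lattice for $L_{\open}(X)$ when $X$ is spectral, and then to check that the natural map $x\mapsto X\setminus\overline{\{x\}}$ is a homeomorphism; the latter is the classical half of Stone duality and splits cleanly into bijectivity and bicontinuity.

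First I would confirm that $L_{\open}(X)$ is an ideal lattice. Completeness is immediate: arbitrary unions of open sets are open, so suprema exist, and infima exist formally as $\inf A=\bigvee\{U\in L_{\open}(X)\mid U\subseteq\bigcap A\}$, i.e. the interior of $\bigcap A$. The multiplication $UV=U\cap V$ is associative, and (L3) holds because intersection distributes over arbitrary union in any topological space. For (L4), the top element is $X$ itself, which is compact precisely because $X$ is quasi-compact (part of being spectral), and $X\cap U=U$. For (L5), an element $U\in L_{\open}(X)$ is compact in the lattice sense exactly when it is a quasi-compact open subset of $X$: if $U$ is quasi-compact and $U\subseteq\bigcup_i U_i$ then finitely many $U_i$ cover $U$; conversely a lattice-compact element is covered by itself, hence equals a finite union of basic quasi-compact opens, hence is quasi-compact. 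Since $X$ is spectral, the quasi-compact opens are closed under finite intersection, giving (L5). Finally (L2): the quasi-compact opens form a basis, so every open set is a union, hence a supremum, of quasi-compact (= compact) opens.

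Next I would analyze the map $f\colon X\to\spec L_{\open}(X)$, $x\mapsto X\setminus\overline{\{x\}}$. One must first check $f(x)$ is actually a prime element. It is $\neq X$ since $x\in\overline{\{x\}}$. If $U\cap V\subseteq X\setminus\overline{\{x\}}$, i.e. $\overline{\{x\}}\cap U\cap V=\emptyset$, and neither $U$ nor $V$ lies in $f(x)$, then $x\in\overline{U}\cap\overline{V}$... more carefully: $U\not\subseteq f(x)$ means $\overline{\{x\}}\cap U\neq\emptyset$, and since $U$ is open this forces $x\in U$ (if $y\in\overline{\{x\}}\cap U$ then $U$ is an open neighborhood of $y$, so $x\in U$); similarly $x\in V$, so $x\in U\cap V$, contradicting $U\cap V\cap\overline{\{x\}}=\emptyset$. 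Hence $f(x)$ is prime. Injectivity uses $T_0$: if $\overline{\{x\}}=\overline{\{y\}}$ then $x=y$ in a $T_0$ space. For surjectivity, given a prime $p\in\spec L_{\open}(X)$, set $Z=X\setminus p$, a nonempty closed set; I claim $Z$ is irreducible. If $Z\subseteq Z_1\cup Z_2$ with $Z_i$ closed and $Z\not\subseteq Z_i$, write $U_i=X\setminus Z_i$; then $U_1\cap U_2\subseteq p$ but $U_i\not\subseteq p$, contradicting primeness of $p$. So $Z$ is irreducible closed, and since $X$ is spectral it has a generic point $x$, i.e. $Z=\overline{\{x\}}$, whence $p=f(x)$. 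For bicontinuity, the closed sets of $\spec L_{\open}(X)$ are the $V(U)=\{p\mid U\subseteq p\}$; one computes $f^{-1}(V(U))=\{x\mid U\subseteq X\setminus\overline{\{x\}}\}=\{x\mid x\notin U\}=X\setminus U$, which is closed, so $f$ is continuous, and this same computation shows $f$ maps the closed set $X\setminus U$ onto $V(U)\cap\operatorname{im}f=V(U)$ (using surjectivity), so $f$ is a closed map; a continuous closed bijection is a homeomorphism.

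The main obstacle, and the step deserving the most care, is the surjectivity argument — specifically the passage from a prime element $p$ to an irreducible closed set and then to its generic point. This is where every one of the spectral hypotheses is used in an essential way: the existence of generic points for irreducible closed subsets is precisely the last clause in Hochster's definition of a spectral space recalled at the start of Section~5, and without it $f$ need not be surjective. Everything else — the ideal-lattice axioms and the continuity/closedness of $f$ — is formal manipulation with open and closed sets. I would therefore present the surjectivity step in full detail and treat the verification of (L1)--(L5) and the bicontinuity of $f$ more briskly, citing that these are standard (this being, after all, ``part of the Stone Duality Theorem'' as the statement notes).
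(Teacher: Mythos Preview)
Your argument is correct. However, the paper does not actually give a proof of this proposition: it simply records it as ``part of the Stone Duality Theorem'' with references to \cite{BKS} and \cite{Jo}, and then uses it as a black box. So there is nothing to compare your approach against; you have supplied the details that the paper deliberately omits. Your verification of (L1)--(L5), the primeness of $X\setminus\overline{\{x\}}$, injectivity via $T_0$, surjectivity via sobriety, and bicontinuity via $f^{-1}(V(U))=X\setminus U$ are all standard and accurate, and your identification of surjectivity as the place where the existence of generic points is essential is exactly right.
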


We deduce from the Classification Theorem the following.

\begin{prop}\label{prr}
Let $X$ be a quasi-compact and quasi-separated scheme. Then
$\lattfl(X)$ is an ideal lattice.
\end{prop}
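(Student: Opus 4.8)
The plan is to verify the five axioms (L1)--(L5) for the lattice $\lattfl(X)$ of tensor localizing subcategories of finite type, equipped with the multiplication
$$\cc S\cdot\cc Q:=\cc S\wedge\cc Q=\cc S\cap\cc Q.$$
Several of these axioms can be transported through the Classification Theorem, which gives an isomorphism of lattices $\psi_X:\lattfl(X)\lra{\sim}\lato(X^*)$, together with the fact that $X^*$ is a spectral space and Proposition~\ref{pr:openlattice}, which already asserts that $\lato$ of a spectral space is an ideal lattice. So the shortest route is: identify the multiplication on $\lattfl(X)$ with the intersection multiplication on $\lato(X^*)$ under $\psi_X$, and then quote Proposition~\ref{pr:openlattice}.

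Concretely, first I would observe that $\psi_X$ sends $\cc S\cap\cc Q$ to $Y(\cc S)\cap Y(\cc Q)$: one inclusion is clear since $\supp_X$ of an object of $\cc S\cap\cc Q$ lies in both $Y(\cc S)$ and $Y(\cc Q)$, and conversely, given $Y(\cc S)\cap Y(\cc Q)$ one uses $\phi_X$ (the inverse of $\psi_X$ from the Classification Theorem) to see that $\cc S(Y(\cc S)\cap Y(\cc Q))=\cc S(Y(\cc S))\cap\cc S(Y(\cc Q))=\cc S\cap\cc Q$, the last equality by $\phi_X\psi_X=1$. Likewise $\psi_X$ sends arbitrary joins $\surd(\bigcup_i\cc S_i)$ to $\bigcup_i Y(\cc S_i)$ (use Lemma~\ref{ooo} and the Classification Theorem), so $\psi_X$ is an isomorphism of lattices-with-multiplication onto $(\lato(X^*),\cap)$. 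Then (L1)--(L5) for $\lattfl(X)$ follow from the same axioms for $\lato(X^*)$, which hold by Proposition~\ref{pr:openlattice} because $X^*$ is spectral (Hochster duality applied to the spectral space $X$).

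Alternatively, and perhaps more in keeping with the self-contained spirit of the section, one can check the axioms directly. (L1): $\lattfl(X)$ is a complete lattice with $\bigwedge=\bigcap$ and $\bigvee=\surd(\bigcup-)$; completeness of meets is immediate and joins exist by Theorem~\ref{tenfg} (the join of tensor localizing subcategories of finite type is again one). (L3): the infinite distributive law $\cc S\cap\surd(\bigcup_i\cc Q_i)=\surd(\bigcup_i(\cc S\cap\cc Q_i))$ is the translation, via $\psi_X$, of the corresponding identity for open subsets of $X^*$, or can be proved directly using that $\Hom(-,E)$ is exact for injective $E$ and Proposition~\ref{tp}/Theorem~\ref{zg}. (L2): the compact elements of $\lattfl(X)$ are precisely the $\cc S(Y)$ with $Y$ quasi-compact open in $X^*$, equivalently (as noted just before Proposition~\ref{rr}) with $X\setminus Y$ quasi-compact open in $X$; every $\cc S=\cc S(Y)=\surd(\bigcup_i\cc S(Y_i))$ is the supremum of such compacts. (L4): $1=\lattfl(X)$'s top element is $\Qcoh(X)=\cc S(X^*)$, which is compact since $X^*$ is quasi-compact, and $\Qcoh(X)\cap\cc S=\cc S$. (L5): the intersection of two compacts $\cc S(Y_1)\cap\cc S(Y_2)=\cc S(Y_1\cap Y_2)$ is compact because quasi-compact open subsets of $X^*$ are closed under finite intersection ($X^*$ spectral).

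The main obstacle is the bookkeeping needed to match the multiplication: one must be careful that ``of finite type'' and ``tensor'' are preserved under the operations involved, but this is exactly what Theorem~\ref{tenfg} (join) and Lemma~\ref{pp}/the proof of Theorem~\ref{zg} (intersection) supply, so there is no real difficulty. I would write the proof in the short form: establish that $\psi_X:\lattfl(X)\to\lato(X^*)$ is an isomorphism of lattices carrying $\cap$ to $\cap$, and conclude by Proposition~\ref{pr:openlattice} together with the fact that $X^*$ is spectral.
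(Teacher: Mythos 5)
Your main route—transporting the ideal-lattice structure from $L_{\open}(X^*)$ to $\lattfl(X)$ via the Classification Theorem isomorphism and then invoking Proposition~\ref{pr:openlattice} together with the spectrality of $X^*$—is exactly the paper's proof. The extra check that $\psi_X$ carries $\cap$ to $\cap$ and the alternative direct verification of (L1)--(L5) are correct but unnecessary, since any inclusion-preserving bijection of lattices automatically preserves finite meets and the multiplication on both sides is $\cap$.
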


\begin{proof}
The space $X$ is spectral. Thus $X^*$ is spectral, also $\lato(X^*)$
is an ideal lattice by Proposition~\ref{pr:openlattice}. By the
Classification Theorem we have an isomorphism $L_{\open}(X^*)\cong
\lattfl(X)$. Therefore $\lattfl(X)$ is an ideal lattice.
\end{proof}

It follows from Proposition~\ref{rr} that $\cc S\in\lattfl(X)$ is
compact \ifff $\cc S=\cc S(Y)$ with $Y\in\lato(X^*)$ compact.

\begin{cor}\label{prrco}
Let $X$ be a quasi-compact and quasi-separated scheme. The points of
$\spec\lattfl(X)$ are the $\cap$-irreducible tensor localizing
subcategories of finite type in $\Qcoh(X)$ and the map
   $$f:X^*\to\spec\lattfl(X),\quad P\mapsto\cc S_P=\{\cc F\in\Qcoh(X)\mid
    \cc F_P=0\}$$
is a homeomorphism of spaces.
\end{cor}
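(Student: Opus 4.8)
The strategy is to reduce everything to the already-established machinery: the Classification Theorem gives a lattice isomorphism $L_{\open}(X^*)\cong\lattfl(X)$, and Proposition~\ref{pr:openlattice} (Stone Duality) gives a homeomorphism $X^*\to\spec L_{\open}(X^*)$ sending $P$ to $X^*\setminus\overline{\{P\}}$. So the first step is to observe that a lattice isomorphism of ideal lattices induces a homeomorphism of their prime spectra: any isomorphism $\phi\colon L\to L'$ of ideal lattices carries prime elements to prime elements (since it preserves the order, finite meets, arbitrary joins, and multiplication) and does the same for $\phi^{-1}$; and it sends $V(a)$ to $V(\phi(a))$, so it is a homeomorphism $\spec L\to\spec L'$. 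Composing, $X^*\xrightarrow{\sim}\spec L_{\open}(X^*)\xrightarrow{\sim}\spec\lattfl(X)$ is a homeomorphism.

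Next I would identify the composite explicitly. Under Stone duality $P\in X^*$ goes to the open set $X^*\setminus\overline{\{P\}}$ of $X^*$, i.e. the largest open subset of $X^*$ not containing $P$. Under $\phi_X$ of the Classification Theorem this open set $Y$ is sent to $\cc S(Y)=\{\cc F\in\Qcoh(X)\mid\supp_X(\cc F)\subseteq Y\}$. So I must check that $\{\cc F\mid\supp_X(\cc F)\subseteq X^*\setminus\overline{\{P\}}\}$ equals $\cc S_P=\{\cc F\mid\cc F_P=0\}$. The containment ``$\subseteq$'' is immediate: if $\supp_X(\cc F)$ misses $P$ then $\cc F_P=0$. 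For ``$\supseteq$'': if $\cc F_P=0$, I need $\supp_X(\cc F)$ to be contained in the \emph{open} (in $X^*$) set $X^*\setminus\overline{\{P\}}$; equivalently, $\supp_X(\cc F)$ should avoid the closure of $\{P\}$ in $X^*$. Here one uses that the closure of $\{P\}$ in $X^*$ consists of the generizations of $P$ in $X$ (closure in the dual topology is generization in the original spectral space), together with the fact that $\supp_X(\cc F)$ is stable under specialization (if $\cc F_P=0$ and $Q$ is a generization of $P$, then $\cc F_Q=0$, since $\cc O_{X,Q}$ is a localization of $\cc O_{X,P}$). Hence $\cc F_P=0$ forces $\supp_X(\cc F)$ to miss every generization of $P$, i.e. to miss $\overline{\{P\}}$ in $X^*$, which is what we need.

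The remaining assertion is that the points of $\spec\lattfl(X)$ are exactly the $\cap$-irreducible tensor localizing subcategories of finite type. Since $\lattfl(X)\cong L_{\open}(X^*)$ as lattices, an element is prime (in the sense of the ideal-lattice spectrum, where the multiplication is $\cap$ on $L_{\open}(X^*)$) precisely when the corresponding open set $W\subseteq X^*$ satisfies: $W_1\cap W_2\subseteq W\Rightarrow W_1\subseteq W$ or $W_2\subseteq W$, i.e. $W$ is $\cap$-irreducible in $L_{\open}(X^*)$, i.e. its complement (a closed, hence irreducible) set has a unique generic point. Transporting this across the isomorphism, the prime elements of $\lattfl(X)$ are exactly the $\cap$-irreducible members of $\lattfl(X)$, and the homeomorphism $f$ is the bijection identifying $P\in X^*$ with the $\cap$-irreducible $\cc S_P$.

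I expect the only genuinely substantive point to be the computation $\cc S_P=\{\cc F\mid\supp_X(\cc F)\subseteq X^*\setminus\overline{\{P\}}\}$, specifically pinning down that closures in $X^*$ are generizations in $X$ and that supports are specialization-closed; everything else is formal transport of structure along the two isomorphisms already in hand. (Alternatively, one may bypass the explicit identification by noting $f$ is the composite of the two homeomorphisms above and then \emph{computing} the image of $P$ to be $\cc S_P$, which is the same calculation.)
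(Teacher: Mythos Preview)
Your proposal is correct and takes essentially the same approach as the paper: the paper's proof simply cites the Classification Theorem together with Propositions~\ref{pr:openlattice} and~\ref{prr}, which is exactly your route of composing the Stone-duality homeomorphism $X^*\to\spec L_{\open}(X^*)$ with the spectrum map induced by the lattice isomorphism $L_{\open}(X^*)\cong\lattfl(X)$. You supply the explicit verification that the composite sends $P$ to $\cc S_P$ (via the identification of $\overline{\{P\}}$ in $X^*$ with the generizations of $P$ in $X$ and specialization-closedness of supports), which the paper leaves implicit; this is a genuine detail worth spelling out, and your argument for it is sound.
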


\begin{proof}
This is a consequence of the Classification Theorem and
Propositions~\ref{pr:openlattice}, \ref{prr}.
\end{proof}

\section{Reconstructing quasi-compact, quasi-separated schemes}

Let $X$ be a quasi-compact and quasi-separated scheme. We shall
write $\spec(\Qcoh(X)):=(\spec\lattfl(X))^*$ and $\supp(\cc
F):=\{\cc P\in\spec(\Qcoh(X))\mid\cc F\not\in\cc P\}$ for $\cc
F\in\Qcoh(X)$. It follows from Corollary~\ref{prrco} that
   $$\supp_X(\cc F)=f^{-1}(\supp(\cc F)).$$

Following \cite{B,BKS}, we define a structure sheaf on
$\spec(\Qcoh(X))$ as follows.  For an open subset $U\subseteq
\spec(\Qcoh(X))$, let
   $$\cc S_U=\{\cc F\in\Qcoh(X)\mid\supp(\cc F)\cap U=\emptyset\}$$
and observe that $\cc S_U=\{\cc F\mid\cc F_P=0\textrm{ for all $P\in
f^{-1}(U)$}\}$ is a tensor localizing subcategory. We obtain a
presheaf of rings on $\spec(\Qcoh(X))$ by
   $$U\mapsto\End_{\Qcoh(X)/\cc S_U}(\cc O_X).$$
If $V\subseteq U$ are open subsets, then the restriction map
   $$\End_{\Qcoh(X)/\cc S_U}(\cc O_X)\to\End_{\Qcoh(X)/\cc S_V}(\cc O_X)$$
is induced by the quotient functor $\Qcoh(X)/\cc S_U\to\Qcoh(X)/\cc
S_V$. The sheafification is called the {\it structure sheaf\/} of
$\Qcoh(X)$ and is denoted by $\cc O_{\Qcoh(X)}$. Next let $\cc
P\in\spec(\Qcoh(X))$ and $P:=f^{-1}(\cc P)$. There is an affine
neighborhood $\spec R$ of $P$. We have
   $$\cc O_{\Qcoh(X),\cc P}\cong\lp_{\cc P\in V}\End_{\Rfp/\cc S_V}(R)\cong R_P\cong\cc O_{X,P}.$$
The second isomorphism follows from~\cite[\S8]{GP2}. We see that
each stalk $\cc O_{\Qcoh(X),\cc P}$ is a commutative ring. We claim
that $\cc O_{\Qcoh(X)}$ is a sheaf of commutative rings. Indeed, let
$a,b\in\cc O_{\Qcoh(X)}(U)$, where $U\in\lato(\spec(\Qcoh(X)))$. For
all $\cc P\in U$ we have $\varrho_{\cc P}^U(ab)=\varrho_{\cc
P}^U(ba)$, where $\varrho_{\cc P}^U:\cc O_{\Qcoh(X)}(U)\to\cc
O_{\Qcoh(X),\cc P}$ is the natural homomorphism. Since $\cc
O_{\Qcoh(X)}$ is a sheaf, it follows that $ab=ba$.

The next theorem says that the abelian category $\Qcoh(X)$ contains
all the necessary information to reconstruct the scheme $(X,\cc
O_X)$.

\begin{thm}[Reconstruction; cf. Rosenberg~\cite{R}]\label{coh}
Let $X$ be a quasi-compact and quasi-sepa\-rated scheme. The map of
Corollary~\ref{prrco} induces an isomorphism of ringed spaces
   $$f:(X,\cc O_X)\lra{\sim}(\spec(\Qcoh(X)),\cc O_{\Qcoh(X)}).$$
\end{thm}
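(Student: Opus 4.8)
The plan is to verify that the map $f : X^* \to \spec\lattfl(X)$ from Corollary~\ref{prrco}, which is already known to be a homeomorphism on underlying spaces, lifts to an isomorphism of ringed spaces once we apply the Hochster dual. First I would note that passing to duals in Corollary~\ref{prrco} gives a homeomorphism $f : X \cong (X^*)^* \to (\spec\lattfl(X))^* = \spec(\Qcoh(X))$, using that $X$ is spectral so $(X^*)^* = X$. So the only thing left is to compare structure sheaves, i.e.\ to produce an isomorphism $f^\sharp : \cc O_{\spec(\Qcoh(X))} \to f_*\cc O_X$.

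The key step is a local (stalkwise) comparison. For $\cc P \in \spec(\Qcoh(X))$ with $P = f^{-1}(\cc P) \in X$, the excerpt already records the chain of isomorphisms
   $$\cc O_{\Qcoh(X),\cc P} \cong \lp_{\cc P\in V}\End_{\Rfp/\cc S_V}(R) \cong R_P \cong \cc O_{X,P},$$
where $\spec R$ is an affine neighbourhood of $P$ and the middle isomorphism is cited from \cite[\S8]{GP2}. So I would first check these stalk isomorphisms are natural in $\cc P$, hence assemble into a morphism of sheaves $f^\sharp$ whose stalks are all isomorphisms; a morphism of sheaves that is an isomorphism on every stalk is an isomorphism. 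To get the morphism itself (not just stalkwise data) I would use the presheaf description: the assignment $U \mapsto \End_{\Qcoh(X)/\cc S_U}(\cc O_X)$ on $\spec(\Qcoh(X))$ should be compared with $U \mapsto \cc O_X(f^{-1}(U))$. Here one uses that $\cc S_U = \{\cc F \mid \cc F_P = 0 \text{ for all } P \in f^{-1}(U)\} = \cc S_{f^{-1}(U)}$ in the notation of Proposition~\ref{loc}, so that when $f^{-1}(U)$ is a quasi-compact open we have by Proposition~\ref{loc} an equivalence $\Qcoh(X)/\cc S_U \simeq \Qcoh(f^{-1}(U))$ carrying $\cc O_X$ to $\cc O_X|_{f^{-1}(U)}$, whence $\End_{\Qcoh(X)/\cc S_U}(\cc O_X) \cong \cc O_X(f^{-1}(U))$. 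These identifications are compatible with restriction maps, so after sheafification we get the desired isomorphism of structure sheaves.

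The main obstacle is dealing with the fact that $\spec(\Qcoh(X))$ carries the dual (``$*$") topology, so its open sets are arbitrary unions $\bigcup_i U_i$ with each $\spec(\Qcoh(X)) \setminus U_i$ quasi-compact open; a general open $U$ need not have $f^{-1}(U)$ quasi-compact, so the clean equivalence $\Qcoh(X)/\cc S_U \simeq \Qcoh(f^{-1}(U))$ is only available on a basis. The way around this is that the sets $f^{-1}(U)$ with $U$ ranging over such basic opens form a basis of $X$, and $X$ being quasi-compact quasi-separated these basic opens are exactly the quasi-compact opens (their intersections staying quasi-compact by quasi-separatedness); since sheaves are determined by their values and restriction maps on a basis, it suffices to establish the presheaf isomorphism on this basis, which is the content of the previous paragraph. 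Thus I would organize the argument as: (i) dualize Corollary~\ref{prrco} to get the homeomorphism on spaces; (ii) identify $\cc S_U$ with $\cc S_{f^{-1}(U)}$ and, for $f^{-1}(U)$ quasi-compact open, invoke Proposition~\ref{loc} to get $\End_{\Qcoh(X)/\cc S_U}(\cc O_X)\cong \cc O_X(f^{-1}(U))$ naturally in $U$; (iii) sheafify to obtain $f^\sharp$ and conclude it is an isomorphism by checking stalks via the displayed chain above, which also shows $(f,f^\sharp)$ is a morphism of locally ringed spaces. A small routine point to confirm is that the structure sheaf $\cc O_{\Qcoh(X)}$, defined only after sheafification, indeed has these stalks — but that is exactly what the excerpt's stalk computation provides.
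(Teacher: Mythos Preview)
Your proposal is correct and follows the paper's strategy: dualize Corollary~\ref{prrco} for the homeomorphism on underlying spaces, construct the comparison morphism $f^\sharp$, and verify it is an isomorphism via the stalk computation $\cc O_{\Qcoh(X),\cc P}\cong R_P\cong\cc O_{X,P}$ already displayed just before the theorem. The one difference is in how $f^\sharp$ is produced: the paper does it more directly by observing, for \emph{every} open $U\subseteq\spec(\Qcoh(X))$, that the restriction functor $\Qcoh(X)\to\Qcoh(f^{-1}(U))$ annihilates $\cc S_U$ (since $\cc F\in\cc S_U$ forces $\supp_X(\cc F)\cap f^{-1}(U)=\emptyset$) and hence factors through $\Qcoh(X)/\cc S_U$, giving a presheaf map $\End_{\Qcoh(X)/\cc S_U}(\cc O_X)\to\cc O_X(f^{-1}(U))$ without any appeal to Proposition~\ref{loc} or to a basis of quasi-compact opens. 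This sidesteps what you call the ``main obstacle'' entirely and lets the stalk check do all the work; your route via Proposition~\ref{loc} buys you an actual isomorphism on basic opens at the presheaf level, but at the cost of the extra bookkeeping you flag.
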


\begin{proof}
The proof is similar to that of~\cite[8.3; 9.4]{BKS}. Fix an open
subset $U\subseteq\spec(\Qcoh(X))$ and consider the functor
   $$F:\Qcoh(X)\xrightarrow{(-)|_{f^{-1}(U)}}\Qcoh f^{-1}(U).$$
We claim that $F$ annihilates $\cc S_U$. In fact, $\cc F\in\cc S_U$
implies $f^{-1}(\supp(\cc F))\cap f^{-1}(U)=\emptyset$ and therefore
$\supp_X(\cc F)\cap f^{-1}(U)=\emptyset$. Thus $\cc F_P=0$ for all
$P\in f^{-1}(U)$ and therefore $F(\cc F)=0$. It follows that $F$
factors through $\Qcoh(X)/\cc S_U$ and induces a map
$\End_{\Qcoh(X)/\cc S_U}(\cc O_X)\to\cc O_X(f^{-1}(U))$ which
extends to a map $\cc O_{\Qcoh(X)}(U)\to\cc O_X(f^{-1}(U))$. This
yields the morphism of sheaves $f^\sharp\colon\cc O_{\Qcoh(X)}\to
f_*\cc O_X$.

By the above $f^\sharp$ induces an isomorphism $f^\sharp_P\colon\cc
O_{\Qcoh(X),f(P)}\to\cc O_{X,P}$ at each point $P\in X$. We conclude
that $f^\sharp_P$ is an isomorphism. It follows that $f$ is an
isomorphism of ringed spaces if the map $f:X\to\spec(\Qcoh(X))$ is a
homeomorphism. This last condition is a consequence of
Propositions~\ref{pr:openlattice}-\ref{prr} and
Corollary~\ref{prrco}.
\end{proof}

\section{Coherent schemes}

We end up the paper with introducing coherent schemes. These are
between noetherian and quasi-compact, quasi-separated schemes and
generalize commutative coherent rings. We want to obtain the
Classification and Reconstruction results for such schemes.

\begin{defs}{\rm

A scheme $X$ is {\it locally coherent\/} if it can be covered by
open affine subsets $\spec R_i$, where each $R_i$ is a coherent
ring. $X$ is {\it coherent\/} if it is locally coherent,
quasi-compact and quasi-separated.

}\end{defs}

The trivial example of a coherent scheme is $\spec R$ with $R$ a
coherent ring. There is a plenty of coherent rings. For instance,
let $R$ be a noetherian ring, and $X$ be any (possibly infinite) set
of commuting indeterminates. Then the polynomial ring $R[X]$ is
coherent. As a note of caution, however, we should point out that,
in general, the coherence of a ring $R$ does not imply that of
$R[x]$ for one variable $x$. In fact, if $R$ is a countable product
of the polynomial ring $\bb Q[y,z]$, the ring $R$ is coherent but
$R[x]$ is not coherent according to a result of Soublin~\cite{So}.
Given a finitely generated ideal $I$ of a coherent ring $R$, the
quotient ring $R/I$ is coherent.

If $R$ is a coherent ring such that the polynomial ring
$R[x_1,\ldots,x_n]$ is coherent, then the projective $n$-space $\bb
P^n_R=\Proj R[x_0,\ldots,x_n]$ over $R$ is a coherent scheme.
Indeed, $\bb P^n_R$ is quasi-compact and quasi-separated
by~\cite[5.1]{GP1} and covered by $\spec R[x_0/x_i,\ldots,x_n/x_i]$
with each $R[x_0/x_i,\ldots,x_n/x_i]$ coherent by assumption.

Below we shall need the following result.

\begin{thm}[Herzog~\cite{H}, Krause~\cite{Kr1}]\label{hk}%\marginpar{hk}
Let $\cc C$ be a locally coherent Grothendieck category. There is a
bijective correspondence between the Serre subcategories $\cc P$ of
$\coh\cc C$ and the localizing subcategories $\cc S$ of $\cc C$ of
finite type. This correspondence is given by the functions
   \begin{gather*}
     \cc P\longmapsto\vec{\cc P}=\{\lp C_i\mid C_i\in\cc P\}\\
     \cc S\longmapsto\cc S\cap\coh\cc C
   \end{gather*}
which are mutual inverses.
\end{thm}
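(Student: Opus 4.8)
The plan is to derive this as a direct corollary of the general machinery already in place, specialised to the locally coherent setting where ``finite type'' and ``strictly finite type'' coincide. Recall from the discussion preceding Proposition~\ref{zu} that in a locally coherent Grothendieck category $\cc C$ one has $\coh\cc C=\fp\cc C$, a localizing subcategory is of finite type if and only if it is of strictly finite type, and for such $\cc S$ the quotient $\cc C/\cc S$ is again locally coherent with $\coh(\cc C/\cc S)=\{C_{\cc S}\mid C\in\coh\cc C\}$.

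\emph{Well-definedness of the two maps.} First I would check that $\cc S\mapsto\cc S\cap\coh\cc C$ lands among Serre subcategories of $\coh\cc C$: since $\cc S$ is a Serre subcategory of $\cc C$ and $\coh\cc C$ is closed under subobjects, quotients and extensions inside $\cc C$ (it is an abelian subcategory because $\cc C$ is locally coherent), the intersection is Serre in $\coh\cc C$. Conversely, for a Serre subcategory $\cc P$ of $\coh\cc C$, I would verify that $\vec{\cc P}=\{\lp C_i\mid C_i\in\cc P\}$ is a localizing subcategory of $\cc C$: it is closed under direct limits essentially by construction, and one checks it is Serre by a direct-limit argument (a subobject or quotient of $\lp C_i$ with $C_i\in\cc P$ is again a direct limit of finitely presented objects, each of which is a subquotient of some $C_i$, hence in $\cc P$); it is of finite type because its finitely presented objects are exactly those in $\cc P$. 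Alternatively, $\vec{\cc P}=\surd\cc P$ by Proposition~\ref{zu} together with the observation that $\surd\cc P\cap\fp\cc C=\cc P$.

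\emph{The two composites are the identity.} For $\cc S\mapsto\cc S\cap\coh\cc C\mapsto\vec{\cc S\cap\coh\cc C}$: by Proposition~\ref{zu}, $\cc S=\surd(\fp\cc C\cap\cc S)=\surd(\coh\cc C\cap\cc S)$, and since $\cc S$ is of finite type this equals $\{\lp C_i\mid C_i\in\coh\cc C\cap\cc S\}=\vec{\cc S\cap\coh\cc C}$, using that in a locally finitely presented category a localizing subcategory of finite type is closed under the relevant direct limits. For $\cc P\mapsto\vec{\cc P}\mapsto\vec{\cc P}\cap\coh\cc C$: one must show $\vec{\cc P}\cap\coh\cc C=\cc P$. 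The inclusion $\supseteq$ is clear. For $\subseteq$, if $C\in\coh\cc C$ is a direct limit $C=\lp C_i$ with $C_i\in\cc P$, then since $C$ is finitely presented the identity of $C$ factors through some $C_i$, so $C$ is a direct summand — in particular a subquotient — of $C_i\in\cc P$, hence $C\in\cc P$ because $\cc P$ is Serre. Both maps are plainly inclusion-preserving, so this finishes the bijection.

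\emph{Main obstacle.} The one genuinely delicate point is verifying that $\vec{\cc P}$ is a Serre subcategory of $\cc C$, i.e.\ closed under subobjects and quotients in all of $\cc C$ and not merely among coherent objects; this is where local coherence is essential (every finitely generated subobject of a coherent object is coherent, so subquotients of objects in $\vec{\cc P}$ decompose appropriately into pieces of $\cc P$). I expect this to follow cleanly from the description of $\surd\cc P$ in Proposition~\ref{surd} applied to $\langle\cc P^\ps\rangle=\cc P$, once one knows $\surd\cc P$ is of finite type; and that last fact is exactly Proposition~\ref{zu} run in reverse, since $\surd\cc P\cap\fp\cc C=\cc P$ forces $\surd\cc P=\surd(\fp\cc C\cap\surd\cc P)$ with the generators $\cc P$ lying in $\fp\cc C$, whence closure under the passage to the quotient category preserving directed sums. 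Everything else is bookkeeping with direct limits of finitely presented objects.
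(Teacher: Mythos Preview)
The paper does not give its own proof of Theorem~\ref{hk}: it is simply stated with attribution to Herzog~\cite{H} and Krause~\cite{Kr1} and then used as a black box in the final section on coherent schemes. So there is nothing in the paper to compare your argument against.

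That said, your outline follows the standard route and is essentially correct, with one soft spot. The assertion that $\vec{\cc P}$ is of finite type is not adequately justified. Saying ``its finitely presented objects are exactly those in $\cc P$'' does not by itself yield finite type, and your attempt to invoke Proposition~\ref{zu} ``in reverse'' is circular: Proposition~\ref{zu} \emph{assumes} finite type and concludes $\cc S=\surd(\fp\cc C\cap\cc S)$, not the other way around. What you actually need here is the criterion from~\cite[5.8]{G} (used repeatedly elsewhere in the paper): a localizing subcategory $\cc S$ is of finite type if and only if for every morphism $f\colon X\to S$ with $X\in\fp\cc C$ and $S\in\cc S$ there is a finitely generated $W\subseteq\kr f$ with $X/W\in\cc S$. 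For $\cc S=\vec{\cc P}$ this follows immediately once you know $\vec{\cc P}\cap\fp\cc C=\cc P$, since $X\in\fp\cc C$ maps to some $C_i\in\cc P$ and then $X/\kr(X\to C_i)\in\cc P\subseteq\fp\cc C$ forces $\kr(X\to C_i)$ to be finitely generated by local coherence. With that patch (and a line checking closure under extensions, which you omit), the argument goes through.
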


\begin{prop}\label{cohsch}%\marginpar{cohsch}
Let $X$ be a quasi-compact and quasi-separated scheme. Then $X$ is a
coherent scheme if and only if $\coh(X)$ is an abelian category or,
equivalently, $\Qcoh(X)$ is a locally coherent Grothendieck
category.
\end{prop}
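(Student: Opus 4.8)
The plan is to reduce the statement to the purely categorical fact that a locally finitely presented Grothendieck category $\cc C$ is locally coherent if and only if $\coh\cc C=\fp\cc C$ if and only if $\coh\cc C$ (equivalently $\fp\cc C$) is abelian, combined with an affine-local analysis of finite presentation that is already set up in the excerpt via Proposition~\ref{qcoh}. The equivalence of the last two conditions is recalled just before Proposition~\ref{zu}, so the real content is the first ``if and only if'': $X$ coherent $\iff$ $\Qcoh(X)$ locally coherent.

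First I would record that by Proposition~\ref{qcoh}, for a quasi-compact, quasi-separated $X$ the category $\Qcoh(X)$ is locally finitely presented and $\fp(\Qcoh(X))$ consists precisely of the locally finitely presented quasi-coherent sheaves; moreover $\coh(X)$ (the locally coherent sheaves) is, by definition, the subcategory of finitely presented sheaves all of whose finitely generated subsheaves are again finitely presented. So $\Qcoh(X)$ locally coherent means exactly $\coh(X)=\fp(\Qcoh(X))$, i.e.\ $\coh(X)$ is abelian. Thus the proposition amounts to showing: $X$ is covered by affine opens $\spec R_i$ with each $R_i$ coherent $\iff$ finite presentation of quasi-coherent sheaves is preserved under passing to finitely generated subsheaves.

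For the forward direction, suppose $X=\bigcup_{i=1}^n\spec R_i$ with each $R_i$ coherent. Given $\cc F\in\fp(\Qcoh(X))$ and a finitely generated subsheaf $\cc G\subseteq\cc F$, restrict to each affine chart: $\cc F|_{\spec R_i}$ corresponds to a finitely presented $R_i$-module $M_i$, and $\cc G|_{\spec R_i}$ to a finitely generated submodule; since $R_i$ is coherent, finitely generated submodules of finitely presented $R_i$-modules are finitely presented, so $\cc G|_{\spec R_i}\in\fp(\Qcoh(\spec R_i))$ for every $i$. Being finitely presented is local on the (finite affine) cover, so $\cc G\in\fp(\Qcoh(X))$; hence $\coh(X)=\fp(\Qcoh(X))$ is abelian. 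For the converse, suppose $\Qcoh(X)$ is locally coherent and fix any affine open $\spec R\subseteq X$. A finitely presented $R$-module $M$ extends to a coherent $\cc O_X$-module (e.g.\ via $j_*\wt M$, using Proposition~\ref{loc} and that $j\colon\spec R\hookrightarrow X$ is quasi-compact, together with the fact that finite presentation is preserved both ways across this localization when $\Qcoh(X)$ is locally coherent, since then $\cc S$ of finite type $\iff$ of strictly finite type), so a finitely generated submodule $N\subseteq M$ corresponds to a finitely generated subsheaf of a finitely presented sheaf, which is finitely presented by hypothesis; restricting back to $\spec R$ shows $N$ is finitely presented over $R$. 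Hence $R$ is coherent, and since this holds for every affine open in particular for the members of a covering, $X$ is coherent.

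The main obstacle I expect is the bookkeeping in the converse direction: one must be careful that ``finitely presented quasi-coherent sheaf on $X$'' restricts to ``finitely presented module'' on an affine chart, and conversely that a finitely presented module on an affine open $\spec R\subseteq X$ is the restriction of some finitely presented (indeed coherent) sheaf on all of $X$ — this second point genuinely uses that $\Qcoh(X)$ is locally coherent so that the localization $\Qcoh(X)\to\Qcoh(\spec R)=\Rfp$ of Proposition~\ref{loc} is of strictly finite type and therefore identifies $\fp(\Qcoh(X)/\cc S_{\spec R})$ with $\{C_{\cc S}\mid C\in\fp(\Qcoh(X))\}$, exactly as quoted before Proposition~\ref{zu}. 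Once that identification is in hand the argument is routine, and the abelian/$\fp=\coh$ equivalence is quoted, so no further work is needed there.
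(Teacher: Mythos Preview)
Your forward direction is essentially the paper's argument: both verify that a finitely generated subsheaf of a locally finitely presented sheaf is again locally finitely presented by restricting to an affine open with coherent coordinate ring and using coherence of that ring, then invoking Proposition~\ref{qcoh}.

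For the converse you take a different and more laborious route than the paper, and there are two genuine slips in it. First, your suggested extension $j_*\wt M$ is in general \emph{not} finitely presented on $X$: the identification $\fp(\cc C/\cc S)=\{C_{\cc S}\mid C\in\fp\cc C\}$ you quote says that $\wt M$ equals $C_{\cc S}$ for some $C\in\fp(\Qcoh(X))$, but $j_*\wt M$ \emph{is} $C_{\cc S}$ regarded inside $\Qcoh(X)$, and the $\cc S$-envelope of a finitely presented object is almost never finitely presented in $\cc C$ (think of $j_*\cc O_{\bb A^1}$ on $\bb P^1$). You must work with the lift $C$, not with $j_*\wt M$. Second, the assertion that a finitely generated $N\subseteq M$ ``corresponds to'' a finitely generated subsheaf of that lift is not automatic: you need to produce some finitely generated $\cc G\subseteq C$ in $\Qcoh(X)$ with $\cc G|_{\spec R}=\wt N$, e.g.\ by writing the preimage of $j_*\wt N$ in $C$ as a directed union of finitely generated subobjects and using that $\wt N$ is finitely generated. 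This can be done, but you have not done it, and once it is carried out you have in effect reproved the general fact that the quotient of a locally coherent category by a finite-type localizing subcategory is again locally coherent.

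That general fact is exactly what the paper invokes: it simply observes that $\cc S_{\spec R}$ is of finite type, quotes (from just before Proposition~\ref{zu}) that then $\Qcoh(X)/\cc S_{\spec R}$ is locally coherent, and concludes via Proposition~\ref{loc} that $\Rfp$ is locally coherent, i.e.\ $R$ is coherent. This bypasses all the lifting bookkeeping you flagged as the main obstacle.
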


\begin{proof}
Suppose $X$ is a coherent scheme. We have to show that every
finitely generated subobject $\cc F$ of a finitely presented object
$\cc G$ is finitely presented. It follows from~\cite[I.6.9.10]{GD}
and Proposition~\ref{qcoh} that $\cc F\in\fg(\Qcoh(X))$ if and only
if it is locally finitely generated.

Given $P\in X$ there is an open subset $U$ of $P$ and an exact
sequence
   $$\cc O_U^n\to\cc O_U^m\to\cc G|_U\to 0.$$
By assumption, there is an affine neighbourhood $\spec R$ of $P$
with $R$ a coherent ring. Let $f\in R$ be such that $P\in
D(f)\subseteq\spec R\cap U$, where $D(f)=\{Q\in\spec R\mid f\notin
Q\}$. One has $\cc O_X(D(f))=\cc O_{R}(D(f))=R_f$, hence we get an
exact sequence
   $$\cc O_{R_f}^n\to\cc O_{R_f}^m\to\cc G|_{D(f)}\to 0.$$
Since $R$ is a coherent ring then so is $R_f$.

There is an open neighbourhood $V$ of $P$ and an epimorphism $\cc
O^k_V\twoheadrightarrow\cc F|_V$, $k\in\bb N$. Without loss of
generality, we may assume that $V=D(f)$ for some $f\in R$. It
follows that $\cc F|_{D(f)}\subset\cc G|_{D(f)}$ is a finitely
presented $\cc O_{R_f}$-module, because $R_f$ is a coherent ring.
Therefore $\cc F$ is locally finitely presented, and hence $\cc
F\in\fp(\Qcoh(X))$.

Now suppose that $\Qcoh(X)$ is a locally coherent Grothendieck
category. Given $P\in X$ and an affine neighbourhood $\spec R$ of
$P$, we want to show that $R$ is a coherent ring. The localizing
subcategory $\cc S=\{\cc F\mid\cc F|_{\spec R}=0\}$ is of finite
type, and therefore $\Qcoh(X)/\cc S$ is a locally coherent
Grothendieck category. It follows from Proposition~\ref{loc} that
$\Rfp\cong\Qcoh(\spec R)\cong\Qcoh(X)/\cc S$ is a locally coherent
Grothendieck category, whence $R$ is coherent.
\end{proof}

\begin{thm}[Classification]
Let $X$ be a coherent scheme. Then the maps
   $$V\mapsto\mathcal S=\{\cc F\in\coh(X)\mid\supp_X(\cc F)\subseteq V\}$$
and
   $$\mathcal S\mapsto V=\bigcup_{\cc F\in\mathcal S}\supp_X(\cc F)$$ induce
bijections between
\begin{enumerate}
 \item the set of all subsets of
the form $V=\bigcup_{i\in\Omega}V_i$ with quasi-compact open
complement $X\setminus V_i$ for all $i\in\Omega$,

 \item the set of all tensor Serre subcategories in $\coh(X)$.
\end{enumerate}
\end{thm}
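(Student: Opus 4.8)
The plan is to deduce this classification from the Classification Theorem for $\Qcoh(X)$ (Theorem~\ref{class}) together with the Herzog--Krause correspondence (Theorem~\ref{hk}), working throughout within the tensor framework. Since $X$ is coherent, Proposition~\ref{cohsch} gives that $\Qcoh(X)$ is a locally coherent Grothendieck category with $\coh(X)=\fp(\Qcoh(X))$ abelian; as $\coh(X)$ is closed under $\otimes_X$, the notion of a tensor Serre subcategory of $\coh(X)$ makes sense. By Theorem~\ref{hk} the assignments $\cc P\mapsto\surd\cc P$ and $\cc S\mapsto\cc S\cap\coh(X)$ are mutually inverse bijections between the Serre subcategories of $\coh(X)$ and the localizing subcategories of finite type in $\Qcoh(X)$; here $\vec{\cc P}=\surd\cc P$ because every finite-type localizing subcategory $\cc S$ satisfies $\cc S=\surd(\cc S\cap\fp(\Qcoh(X)))$ by Proposition~\ref{zu}.

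The first step is to check that this correspondence respects the tensor condition: a localizing subcategory of finite type $\cc S\subset\Qcoh(X)$ is tensor if and only if $\cc P:=\cc S\cap\coh(X)$ is a tensor Serre subcategory of $\coh(X)$. The forward direction is immediate from the definitions. For the converse, suppose $\cc P$ is a tensor Serre subcategory; since $\fp(\Qcoh(X))=\coh(X)$ and $\surd\cc P\cap\coh(X)=\cc P$, Lemma~\ref{pp} shows that $\surd\cc P$ is a tensor localizing subcategory. Consequently the Herzog--Krause bijection restricts to a bijection between the tensor Serre subcategories of $\coh(X)$ and the tensor localizing subcategories of finite type in $\Qcoh(X)$, that is, onto $\lattfl(X)$.

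The second step is to identify the two maps of the theorem with the composites of $\phi_X,\psi_X$ (from Theorem~\ref{class}) with this restricted correspondence. For $V=\bigcup_{i}V_i\in\lato(X^*)$ one has $\phi_X(V)\cap\coh(X)=\{\cc F\in\coh(X)\mid\supp_X(\cc F)\subseteq V\}$, which is the first map, and it is a tensor Serre subcategory by the previous step. For a tensor Serre subcategory $\cc P$ of $\coh(X)$, Lemma~\ref{ooo} gives
$$\bigcup_{\cc F\in\cc P}\supp_X(\cc F)=\bigcup_{\cc F\in\surd\cc P}\supp_X(\cc F)=\psi_X(\surd\cc P),$$
which lies in $\lato(X^*)$ by Lemma~\ref{sss}; this is the second map. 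Since $\phi_X$ and $\psi_X$ are mutually inverse by Theorem~\ref{class} and the restricted Herzog--Krause maps are mutually inverse by the previous step, the two composites are mutually inverse, which proves the statement.

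The only genuinely substantive point is the tensor-compatibility of the first step; everything else is formal manipulation of supports via Lemma~\ref{ooo} and the two classification theorems already established. That point, however, reduces cleanly to Lemma~\ref{pp} once one knows $\fp(\Qcoh(X))=\coh(X)$ (Proposition~\ref{cohsch}), so I do not anticipate serious difficulty; the main things to be careful about are that ``$\otimes_X$-closed inside $\coh(X)$'' is the correct reading of ``tensor Serre subcategory'' and that $\surd\cc P\cap\coh(X)=\cc P$ under the Herzog--Krause correspondence.
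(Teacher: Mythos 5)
Your proof is correct and follows exactly the route the paper indicates for this result, namely combining Theorem~\ref{class} with the Herzog--Krause correspondence (Theorem~\ref{hk}) via Proposition~\ref{cohsch}, Proposition~\ref{zu}, Lemma~\ref{pp} and Lemma~\ref{ooo}; the paper leaves precisely these details to the reader. The only step you assert without argument is that $\coh(X)$ is closed under $\otimes_X$, which is needed for the notion of tensor Serre subcategory and for the tensor-compatibility of the Herzog--Krause bijection, but this holds because locally a tensor product of finitely presented modules over a coherent ring is again finitely presented.
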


\begin{thm}
Let $X$ be a coherent scheme. The assignments
   $$\cc T\mapsto\cc S=\{\cc F\in\coh(X)\mid\supp_X(\cc F)
     \subseteq\bigcup_{n\in\bb Z,E\in\cc T}\supp_X(H_n(E))\}$$
and
   $$\cc S\mapsto
     \{E\in\perf(X)\mid H_n(E)\in\cc S\textrm{ for all $n\in\bb Z$}\}$$
induce a bijection between
\begin{enumerate}
\item the set of all tensor thick subcategories of $\perf(X)$,

\item the set of all tensor Serre subcategories
in $\coh(X)$.
\end{enumerate}
\end{thm}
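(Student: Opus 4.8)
The plan is to deduce this theorem from the already-established correspondences rather than reprove anything from scratch. The key observation is that for a coherent scheme $X$, Proposition~\ref{cohsch} tells us that $\Qcoh(X)$ is a locally coherent Grothendieck category with $\coh(X)=\fp(\Qcoh(X))$, and Theorem~\ref{hk} of Herzog and Krause gives a bijective correspondence between Serre subcategories of $\coh\cc C$ and localizing subcategories of finite type in $\cc C=\Qcoh(X)$, via $\cc P\mapsto\vec{\cc P}$ and $\cc S\mapsto\cc S\cap\coh\cc C$. So the strategy is to compose this with the bijection of Theorem~\ref{lll} (tensor thick subcategories of $\perf(X)$ $\leftrightarrow$ tensor localizing subcategories of finite type in $\Qcoh(X)$) and check that everything restricts correctly to the tensor setting and that the support formulas match up on the nose.

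First I would verify that the Herzog--Krause correspondence is compatible with the tensor structure: a Serre subcategory $\cc P\subseteq\coh(X)$ is tensor (in the sense that $\cc F\otimes_X\cc G\in\cc P$ for $\cc F\in\cc P$, $\cc G\in\coh(X)$) if and only if $\vec{\cc P}=\surd\cc P$ is a tensor localizing subcategory of $\Qcoh(X)$. The forward direction is Lemma~\ref{ppp} applied to $\cc P$ (closed under subobjects, quotients, direct sums and tensor products in $\coh(X)$, and in a locally coherent category the tensor product of coherent objects is coherent since $\otimes_X$ is right exact and preserves finite generation); the reverse direction is Lemma~\ref{pp}, since a tensor localizing subcategory of finite type is determined by its finitely presented torsion objects. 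Then I would note that $\cc S\cap\coh\cc C$ inherits the tensor property from $\cc S$ trivially. This gives a bijection between tensor Serre subcategories of $\coh(X)$ and tensor localizing subcategories of finite type in $\Qcoh(X)$.

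Next I would check that the support data is correctly transported. For $\cc F\in\coh(X)$ the support $\supp_X(\cc F)$ is the same whether computed in $\coh(X)$ or $\Qcoh(X)$, and by Lemma~\ref{ooo} (together with the fact that in a locally coherent category every object of $\surd\cc P$ is a direct limit of coherent subquotients of objects of $\cc P$, so $\bigcup_{\cc F\in\cc P}\supp_X(\cc F)=\bigcup_{\cc F\in\vec{\cc P}}\supp_X(\cc F)$) the union of supports over $\cc P$ agrees with the union over $\vec{\cc P}$. Thus under the Herzog--Krause bijection, a tensor Serre subcategory $\cc P$ with support $V$ corresponds to $\vec{\cc P}=\{\cc F\in\Qcoh(X)\mid\supp_X(\cc F)\subseteq V\}$ (one inclusion is clear; for the other, a quasi-coherent sheaf with support in $V$ is a direct limit of finitely presented subquotients, each necessarily supported in $V$, hence coherent and in $\cc P$ by the Classification Theorem~\ref{class}), and conversely $\vec{\cc P}\cap\coh(X)=\cc P$. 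This identifies the maps in the Classification statement with $\phi_X$ composed with $\cc S\mapsto\cc S\cap\coh(X)$, so bijectivity follows from Theorem~\ref{class} and Theorem~\ref{hk}.

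Finally, for the second theorem, I would simply observe that the square of assignments factors: the map $\cc T\mapsto\{E\in\perf(X)\mid H_n(E)\in\cc S\}$ is exactly the composite of $\tau$ from Theorem~\ref{lll} with $\vec{(-)}$, and the map $\cc T\mapsto\cc S=\{\cc F\in\coh(X)\mid\supp_X(\cc F)\subseteq Y(\cc T)\}$ is $\rho$ from Theorem~\ref{lll} followed by intersection with $\coh(X)$; since $Y(\cc T)=\bigcup_{n,E}\supp_X(H_n(E))$ lies in $\lato(X^*)$ and the tensor condition is preserved throughout, the claimed bijection is the composite of the bijection of Theorem~\ref{lll} with the tensor Herzog--Krause bijection established above. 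The main obstacle, and the step deserving the most care, is confirming that $\coh(X)$ really is an abelian (Serre) subcategory on which the tensor product behaves well enough for Lemma~\ref{ppp} to apply — i.e. that $\otimes_X$ preserves coherence; this rests on Proposition~\ref{cohsch} and the local description of coherent sheaves, and everything else is bookkeeping.
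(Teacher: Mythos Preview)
Your approach is exactly what the paper intends: it simply asserts that the theorem is a ``direct consequence of the corresponding theorems for quasi-compact, quasi-separated schemes and Theorem~\ref{hk}'' and leaves the details to the reader, and you have supplied those details by composing Theorem~\ref{lll} with the Herzog--Krause bijection and checking tensor compatibility and support formulas. One small correction: in establishing that a tensor Serre subcategory $\cc P\subseteq\coh(X)$ yields a tensor localizing $\vec{\cc P}$, Lemma~\ref{ppp} does not apply as stated, since its hypothesis requires closure under \emph{arbitrary} direct sums in $\Qcoh(X)$, which $\cc P$ does not satisfy; both directions of the tensor compatibility follow cleanly from Lemma~\ref{pp} instead, using $\vec{\cc P}\cap\fp(\Qcoh(X))=\cc P$ and your observation that $\otimes_X$ preserves coherence.
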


Let $X$ be a coherent scheme. The ringed space $(\spec(\coh(X)),\cc
O_{\coh(X)})$ is introduced similar to $(\spec(\Qcoh(X)),\cc
O_{\Qcoh(X)})$.

\begin{theo}[Reconstruction]
Let $X$ be a coherent scheme. Then there is a natural isomorphism of
ringed spaces
   $$f:(X,\cc O_X)\lra{\sim}(\spec(\coh(X)),\cc O_{\coh(X)}).$$
\end{theo}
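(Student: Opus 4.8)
The plan is to mimic the proof of the Reconstruction Theorem (Theorem~\ref{coh}) for quasi-compact, quasi-separated schemes, but replacing $\Qcoh(X)$ by $\coh(X)$ throughout. The key point that makes this legitimate is Proposition~\ref{cohsch}: since $X$ is coherent, $\Qcoh(X)$ is a locally coherent Grothendieck category, $\coh(X)=\fp(\Qcoh(X))$ is abelian, and by Theorem~\ref{hk} the Serre subcategories of $\coh(X)$ correspond bijectively (via $\cc P\mapsto\vec{\cc P}$ and $\cc S\mapsto\cc S\cap\coh(X)$) to the localizing subcategories of finite type in $\Qcoh(X)$. Under this bijection, tensor Serre subcategories correspond to tensor localizing subcategories of finite type: indeed, if $\cc P$ is a tensor Serre subcategory, then for $\cc F\in\vec{\cc P}$, writing $\cc F=\varinjlim C_i$ with $C_i\in\cc P$, and $\cc G\in\Qcoh(X)$, writing $\cc G=\varinjlim G_j$ with $G_j\in\coh(X)$, we get $\cc F\otimes_X\cc G=\varinjlim(C_i\otimes_X G_j)$, and each $C_i\otimes_X G_j$ is a finitely generated subquotient-free image so lies in $\vec{\cc P}\cap$ (something) — more carefully, using Lemma~\ref{pp} the tensor condition on a localizing subcategory of finite type is detected on finitely presented objects, and $C_i\otimes_X G_j\in\coh(X)$ because $\coh(X)$ is closed under tensor products and cokernels; hence $C_i\otimes_X G_j\in\cc P$. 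The converse is immediate by restriction.

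Granting this correspondence, the first Classification Theorem for coherent schemes follows by transport along $\cc S\mapsto\cc S\cap\coh(X)$ from Theorem~\ref{class}: a subset $V\subseteq X$ of the prescribed form corresponds under $\phi_X$ to the tensor localizing subcategory of finite type $\{\cc F\in\Qcoh(X)\mid\supp_X(\cc F)\subseteq V\}$, whose intersection with $\coh(X)$ is exactly $\{\cc F\in\coh(X)\mid\supp_X(\cc F)\subseteq V\}$; and conversely, given a tensor Serre subcategory $\cc S\subseteq\coh(X)$, one has $\bigcup_{\cc F\in\cc S}\supp_X(\cc F)=\bigcup_{\cc F\in\vec{\cc S}}\supp_X(\cc F)$ by Lemma~\ref{ooo} (every object of $\vec{\cc S}$ is a direct limit, hence a directed union of images, of objects of $\cc S$, and support is compatible with filtered colimits for quasi-coherent sheaves since support of $\varinjlim$ is the union of supports). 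The second theorem then follows exactly as Theorem~\ref{lll} followed from Theorem~\ref{class}: compose the bijection $\lato(X^*)\cong L_{\thick,\otimes}(\perf(X))$ of Thomason (Theorem~\ref{nn}) with the bijection $\lato(X^*)\cong\{$tensor Serre subcategories of $\coh(X)\}$ just established, checking the triangle commutes using $\supph_X(E)=\bigcup_n\supp_X(H_n(E))$ and the fact that the cohomology sheaves $H_n(E)$ of a perfect complex are coherent.

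For the Reconstruction Theorem, I would observe that $\lattfl(X)$ and the lattice of tensor Serre subcategories of $\coh(X)$ are isomorphic ideal lattices (Proposition~\ref{prr} gives that $\lattfl(X)$ is an ideal lattice; the isomorphism just transports the structure), so $\spec(\coh(X))$ is by definition $(\spec L_{\serre,\otimes}(\coh(X)))^*\cong(\spec\lattfl(X))^*=\spec(\Qcoh(X))$, and this is homeomorphic to $X$ by Corollary~\ref{prrco} and Proposition~\ref{pr:openlattice}. The structure sheaf $\cc O_{\coh(X)}$ is built from the presheaf $U\mapsto\End_{\coh(X)/(\cc S_U\cap\coh(X))}(\cc O_X)$; since $\cc O_X\in\coh(X)$ and, for a localizing subcategory of strictly finite type $\cc S_U$, one has $\coh(X)/(\cc S_U\cap\coh(X))\cong\coh(\Qcoh(X)/\cc S_U)=\fp(\Qcoh(X)/\cc S_U)$ compatibly with the inclusion of $\cc O_X$ (this is the statement $\fp(\cc C/\cc S)=\{C_{\cc S}\mid C\in\fp\cc C\}$ recalled before Proposition~\ref{zu}), the endomorphism rings agree: $\End_{\coh(X)/(\cc S_U\cap\coh(X))}(\cc O_X)\cong\End_{\Qcoh(X)/\cc S_U}(\cc O_X)$. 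Hence the presheaves, and so their sheafifications $\cc O_{\coh(X)}$ and $\cc O_{\Qcoh(X)}$, coincide, and Theorem~\ref{coh} gives the isomorphism of ringed spaces $(X,\cc O_X)\xrightarrow{\sim}(\spec(\coh(X)),\cc O_{\coh(X)})$.

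The main obstacle I anticipate is the bookkeeping in identifying $\End_{\coh(X)/(\cc S_U\cap\coh(X))}(\cc O_X)$ with $\End_{\Qcoh(X)/\cc S_U}(\cc O_X)$ functorially in $U$ — one must make sure the restriction maps match, i.e. that the Herzog–Krause correspondence is compatible with the quotient functors $\Qcoh(X)/\cc S_U\to\Qcoh(X)/\cc S_V$ and their coherent analogues for $V\subseteq U$. This should follow from naturality of the identification $\fp(\cc C/\cc S)=\{C_{\cc S}\mid C\in\fp\cc C\}$ and the fact that $\cc S_U$ is of strictly finite type for quasi-compact open $U$ (so successive localizations behave well), but it requires care; everything else is a routine transport of structure along Proposition~\ref{cohsch} and Theorem~\ref{hk}.
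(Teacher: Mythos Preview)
Your proposal is correct and follows exactly the approach the paper intends: the paper's own proof is a one-line remark that the coherent-scheme theorems are ``direct consequences of the corresponding theorems for quasi-compact, quasi-separated schemes and Theorem~\ref{hk},'' and you have simply spelled out that transport of structure in detail. The only point where you go beyond the paper is in flagging the functoriality-in-$U$ of the identification $\End_{\coh(X)/(\cc S_U\cap\coh(X))}(\cc O_X)\cong\End_{\Qcoh(X)/\cc S_U}(\cc O_X)$, which the paper leaves to the reader; your diagnosis that this follows from the naturality of $\fp(\cc C/\cc S)=\{C_{\cc S}\mid C\in\fp\cc C\}$ is correct.
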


The theorems are direct consequences of the corresponding theorems
for quasi-compact, quasi-separated schemes and Theorem~\ref{hk}. The
interested reader can check these without difficulty.

\end{document}